\newcommand{\1}{\mathds{1}}
\newcommand{\0}{\mathds{O}}
\newcommand{\R}{\mathbb{R}}
\newcommand{\N}{\mathbb{N}}
\newcommand{\8}{\infty}
\newcommand{\Co}{\mathcal{C}}
\newcommand{\Fo}{\mathcal{F}}
\newcommand{\Po}{\mathcal{P}}
\newcounter{dummy} \numberwithin{dummy}{section}
\newtheorem{theorem}[dummy]{Theorem}
\newtheorem{lemma}[dummy]{Lemma}
\newtheorem{proposition}[dummy]{Proposition}
\newtheorem{corollary}[dummy]{Corollary}
\newtheorem{question}[dummy]{Question}
\theoremstyle{remark}
\newtheorem{remark}[dummy]{Remark}
\newtheorem{example}[dummy]{Example}
\begin{document}

\title{Composition of locally solid convergences}
\author{Eugene Bilokopytov\footnote{Email address bilokopy@ualberta.ca, erz888@gmail.com.}}
\maketitle

\begin{abstract}
We carry on a more detailed investigation of the composition of locally solid convergences as introduced in \cite{ectv}, as well as the corresponding notion of idempotency considered in \cite{erz}. In particular, we study the interactions between these two concepts and various operations with convergences. We prove associativity of the composition and show that the adherence of an ideal with respect to an idempotent convergence is equal to its closure. Some results from \cite{kt} about unbounded modification of locally solid topologies are generalized to the level of locally solid idempotent convergences. A simple application of the composition allows us to answer a question from \cite{ectv} about minimal Hausdorff locally solid convergences. We also show that the weakest Hausdorff locally solid convergence exists on an Archimedean vector lattice if and only if it is atomic.

\emph{Keywords:} Vector lattices, Net convergence spaces;

MSC2020 46A19, 46A40, 46B42, 46E25, 54A20.
\end{abstract}

\section{Introduction}

This note is a continuation of the study of locally solid convergence structures on vector lattices. In the previous work \cite{ectv} a definition of a composition of locally solid convergences was proposed. The corresponding notion of an idempotent convergence was considered in \cite{erz}. Here we add more details pertaining to these two concepts. In particular, we show (Theorem \ref{product}) that if $E\subset F$ is an ideal of a vector lattice, and $\eta,\theta$ are two locally solid convergences, then $\overline{E}^{1,\eta\theta}=\overline{\overline{E}^{1,\eta}}^{1,\theta}$, which somewhat justifies the term ``composition''; we also establish in Corollary \ref{ass} that composition is associative. In Corollary \ref{mon} and Theorem \ref{intd} we explore the topic of convergences which share decreasing null nets. We show (propositions \ref{unbo}, \ref{pullb} and \ref{choq}) that unbounded modification with respect to an ideal, a pull-back along a positive operator and the Choquet modification are ``submultiplicative'', and in particular preserve idempotent convergences (Example \ref{idempotent}). Numerous examples of idempotent convergences are provided in Subsection \ref{idem}.\medskip

We also present some applications of the concepts of composition and idempotency of locally solid convergences. In Section \ref{weake} we look at the minimal Hausdorff locally solid convergences. We show (Theorem \ref{weakest}) that the Choquet modification of unbounded order convergence is a minimal Hausdorff locally solid convergence on an Archimedean vector lattice if and only if the latter is atomic, thus answering a question from \cite{ectv}. Furthermore, another condition equivalent to atomicity is existence of the weakest Hausdorff locally solid convergence. Here the relevance of the notion of composition is a simple result (Lemma \ref{weaker}) which reduces the problem from the level of locally solid \textbf{linear} convergences to that of locally solid \textbf{additive} convergences, which are easier to deal with. In Section \ref{unb} we focus on the unbounded modification, and generalize some results from \cite{kt} by replacing locally solid topologies with idempotent locally solid convergences. In particular, Theorem \ref{unbc} gives a fairly comprehensive characterization of when one unbounded convergence is stronger than another.\medskip

We remark here that a notion, similar to powers of relative uniform convergence was considered in \cite{fremlin}, but the author is not aware of any further developments in that direction.

\subsection{Preliminaries}

We refer to \cite{erz} and \cite{ectv} for the theory of locally solid convergences and to \cite{bb} and \cite{dow} for more background on convergence spaces. Note that the source \cite{erz} studies additive convergences (in which the addition and taking the additive inverse are continuous), while only linear convergences (in which scalar multiplication is also continuous) are considered in \cite{ectv}. We need a slightly broader class of additive convergences, due to its somewhat higher flexibility. In particular, we will often rely on the following construction.

\begin{example}\label{propadd}
Let $F$ be a linear space and let $E\subset F$ be a subspace endowed with an additive convergence $\eta$. This convergence can be extended to $F$ by declaring a net $\left(f_{\alpha}\right)_{\alpha\in A}\subset F$ null if its tail is contained in $E$ and is $\eta$-null there, and then defining $f_{\alpha}\to f$ if $f_{\alpha}-f\to 0_{F}$. One can show that the convergence $\eta_{F}$ defined this way is indeed an additive convergence structure, and $\eta_{F}$ is Hausdorff iff $\eta$ is. Furthermore, $\eta_{F}$ is the strongest additive convergence on $F$ whose restriction to $E$ is weaker than $\eta$ (it is the final additive convergence with respect to the inclusion of $E$ into $F$). Note that $E$ is $\eta_{F}$-closed in $F$. Indeed, if $\left(e_{\alpha}\right)_{\alpha\in A}\subset E$ converges to $f\in F$, there is $\alpha$ such that $f-e_{\alpha}\in E$, hence $f\in E$. If $E\varsubsetneq F$ then $\eta_{F}$ is not linear, since $\frac{1}{n}f\not\to 0_{F}$ whenever $f\in F\backslash E$.
\qed\end{example}

In the sequel $F$ is a vector lattice. We will not reproduce all the definitions necessary to define locally solid convergences. Instead, we will operate through the following result.

\begin{theorem}[\cite{erz}, Theorem 2.1 and \cite{ectv}, Proposition 3.4]\label{locs}
Assume that $\to 0_{F}$ is a non-trivial adjudicator on $F_{+}$ to $0_{F}$ which satisfies the following properties:
\begin{itemize}
\item A quasi-subnet of a net convergent to $0_{F}$ converges to $0_{F}$;
\item If $\left(f_{\alpha}\right)_{\alpha\in A}\subset F_{+}$ is convergent to $0_{F}$, then $g_{\alpha}\to 0_{F}$, where $0_{F}\le g_{\alpha}\le f_{\alpha}$, for every $\alpha\in A$;
\item If $f_{\alpha}\to 0_{F}$ and $g_{\alpha}\to 0_{F}$, then $f_{\alpha}+g_{\alpha}\to 0_{F}$.
\end{itemize}
Then, the convergence defined by $f_{\alpha}\to f$ if $\left|f_{\alpha}-f\right|\to 0_{F}$ is a locally solid additive convergence. It is linear iff $\frac{1}{n}f\to 0_{F}$, for every $f\in F_{+}$, and Hausdorff if $0_{F}<f\not\to0_{F}$.
\end{theorem}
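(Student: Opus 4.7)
My plan for the proof splits into four parts: verifying the convergence axioms, local solidness, the Hausdorff characterization, and the linearity characterization. First, I would check that the relation $f_{\alpha}\to f\Leftrightarrow|f_{\alpha}-f|\to 0_{F}$ gives an additive convergence on $F$. Constant nets converge since $|f-f|=0_{F}$ and the adjudicator is non-trivial at $0_{F}$; subnets of convergent nets converge by applying the quasi-subnet axiom to $(|f_{\alpha}-f|)$; continuity of addition comes from the triangle inequality $|(f_{\alpha}+g_{\alpha})-(f+g)|\le|f_{\alpha}-f|+|g_{\alpha}-g|$ together with the additivity property of the adjudicator and the solidness axiom; continuity of negation is immediate since $|-f_{\alpha}-(-f)|=|f_{\alpha}-f|$.

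For local solidness, if $|f_{\alpha}|\le|g_{\alpha}|$ and $g_{\alpha}\to 0_{F}$ in the new convergence, then $|g_{\alpha}|\to 0_{F}$ in the adjudicator sense, and the solidness axiom directly yields $|f_{\alpha}|\to 0_{F}$, hence $f_{\alpha}\to 0_{F}$. The Hausdorff characterization is similar in spirit: given $f_{\alpha}\to f$ and $f_{\alpha}\to g$, additivity supplies $|f_{\alpha}-f|+|f_{\alpha}-g|\to 0_{F}$, and since $|f-g|\le|f_{\alpha}-f|+|f_{\alpha}-g|$ for every $\alpha$, solidness forces the constant net at $|f-g|$ to converge to $0_{F}$, so the hypothesis gives $f=g$. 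The converse direction is automatic, since the constant net at $f$ always converges to $f$, so if it also converged to $0_{F}$ with $f>0_{F}$, uniqueness would fail.

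For linearity, the forward direction follows by applying joint continuity of scalar multiplication to the constant net at $f\in F_{+}$ together with $\frac{1}{n}\to 0$. For the converse, I would establish joint continuity of scalar multiplication via the decomposition
\[
|\lambda_{\alpha}f_{\alpha}-\lambda f|\le|\lambda_{\alpha}|\cdot|f_{\alpha}-f|+|\lambda_{\alpha}-\lambda|\cdot|f|.
\]
For the first summand, a fixed integer $N\ge|\lambda|+1$ eventually dominates $|\lambda_{\alpha}|$, so $|\lambda_{\alpha}|\cdot|f_{\alpha}-f|$ is eventually majorized by $N|f_{\alpha}-f|$, which is null by applying the additivity axiom $N$ times; solidness then finishes.

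The \emph{main obstacle} is the second summand: given only $|\lambda_{\alpha}-\lambda|\to 0$ in $\R$, one must show $|\lambda_{\alpha}-\lambda|\cdot|f|\to 0_{F}$, whereas the hypothesis asserts this only for the specific sequence $\frac{1}{n}$. The saving observation is that for every $n\in\N$, eventually $|\lambda_{\alpha}-\lambda|\cdot|f|\le\frac{1}{n}|f|$, so a diagonal/quasi-subnet argument combined with the hypothesized convergence $\frac{1}{n}|f|\to 0_{F}$ and solidness should close the gap. This is where I expect the technical subtleties associated with the "quasi-subnet" notion to be essential, since we need to upgrade "eventually dominated by each term of a null net" to actual null convergence.
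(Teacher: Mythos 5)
The paper does not prove this statement: it is imported verbatim from \cite[Theorem 2.1]{erz} and \cite[Proposition 3.4]{ectv}, so your argument can only be measured against those sources, whose standard proof it essentially reproduces. Your outline is correct, and the step you flag as the ``main obstacle'' is not a gap: a positive net which, for each $n$, is eventually dominated by $\frac{1}{n}\left|f\right|$ is indeed null once $\frac{1}{n}\left|f\right|\to 0_{F}$. Concretely, put $D=\left\{\left(\alpha,n\right):\ \alpha\ge\alpha_{n}\right\}$ with the product order; then $\left(\frac{1}{n}\left|f\right|\right)_{\left(\alpha,n\right)\in D}$ is a quasi-subnet of the sequence $\left(\frac{1}{n}\left|f\right|\right)_{n\in\N}$, it dominates $\left(\left|\lambda_{\alpha}-\lambda\right|\left|f\right|\right)_{\left(\alpha,n\right)\in D}$ term by term, and this last net generates the same tail filter as the original net indexed by $A$; the quasi-subnet and solidness axioms then finish. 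This is precisely the domination principle that the present paper later invokes as \cite[Lemma 3.7]{ectv} in the proof of Theorem \ref{intd}.

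The one item you do not address is the remaining axiom of a net convergence structure in the sense of \cite{dow}: besides constant nets and quasi-subnets, one must check that if two nets converge to the same point, then their amalgamation (equivalently, the intersection of their tail filters) converges there. For the derived convergence this reduces to amalgamating two null nets $\left(f_{\alpha}\right)_{\alpha\in A},\left(g_{\beta}\right)_{\beta\in B}\subset F_{+}$, and it does follow from your three tools: the net $\left(f_{\alpha}+g_{\beta}\right)_{\left(\alpha,\beta\right)\in A\times B}$ is null by the quasi-subnet and additivity axioms, its reindexing over the amalgam's index set dominates the amalgam pointwise, and solidness applies. This is a small omission rather than a flaw, but it must be stated for the verification that one actually has a convergence structure to be complete.
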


We call a convergence $\eta$ \emph{stronger} than $\theta$ (denoted $\eta\ge\theta$) if convergence with respect to $\eta$ implies convergence with respect to $\theta$. The set of all (additive or linear) locally solid convergences is a complete lattice: if $\Theta$ is a collection of convergences, a net converges with respect to $\bigvee\Theta$ if it converges with respect to each member of $\theta$. On the other hand, if $\Theta$ is directed downward, then a net converges with respect to $\bigwedge\Theta$ if it converges with respect to a member of $\theta$. In particular, the infimum of a decreasing net of Hausdorff convergences is Hausdorff.

\begin{example}
The discrete convergence, in which only eventually constant nets converge, is locally solid additive.
\qed\end{example}

It is easy to see (by Theorem \ref{locs} or directly) that if $E\subset F$ is an ideal and $\eta$ is a locally solid additive convergence on $E$, then its extension $\eta_{F}$ is a locally solid additive convergence on $F$. This means that we can always view our convergences defined on $F$, thus circumventing the issue which appears in \cite{klt}.

\begin{example}\label{lru}
For $e\in F_{+}$ the principal ideal $F_{e}$ is endowed with the norm $\|\cdot\|_{e}$ defined by $\|f\|_{e}=\bigwedge\left\{\alpha\ge 0:~ \left|f\right|\le\alpha e\right\}$. We will extend the convergence determined by $\|\cdot\|_{e}$ to the entire $F$ (and keep the same notation). The resulting convergence is locally solid additive, and \emph{relative uniform} convergence is precisely $\bigwedge\limits_{e\in F_{+}}\|\cdot\|_{e}$.
\qed\end{example}

\begin{example}\label{aw}
Let $F$ be a normed lattice. If we ``feed'' the weak convergence to $0_{F}$ on $F_{+}$ into Theorem \ref{locs}, we obtain the \emph{absolute weak} topology $\mathrm{aw}$. This is the weakest locally solid convergence stronger than the weak topology (which fails to be locally solid in most cases). In particular, $\mathrm{aw}$ is weaker than the norm topology. On the other hand, Dini's theorem (the proof of \cite[Proposition 1.4.1]{mn} works for nets) asserts that a decreasing weakly null net on a normed lattice is norm-null, and so $\mathrm{aw}$ agrees with the norm convergence on monotone nets.
\qed\end{example}

A significant role in this article is played by monotone nets. Note that a directed upward subset of $F$ may be viewed as an increasing net indexed by itself. Similarly, if $G\subset F$ is directed downward, it can be viewed as a decreasing net indexed by $-G$. In either case, the notation $G\to f$ means that $G$ converges to $f$ as a net indexed by $G$ or $-G$. See \cite[]{ectv} for some facts about the convergence of monotone nets. Here we reproduce one of them (the proof works for the additive case).

\begin{lemma}[\cite{ectv}, Proposition 3.9]\label{mono}
Let $\eta$ be a locally solid additive convergence on $F$. Then, a decreasing net in $F_{+}$ $\eta$-converges to $0_{F}$ as soon as it contains (as a subset) a $\eta$-null net.
\end{lemma}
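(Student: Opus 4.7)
By Theorem \ref{locs}, since the decreasing net already lives in $F_+$, it suffices to show that $(h_\gamma)_{\gamma\in\Gamma}\to 0_F$ in the underlying adjudicator. Let $(f_\alpha)_{\alpha\in A}$ be the given $\eta$-null net with $\{f_\alpha\}\subseteq\{h_\gamma\}$, and for each $\alpha$ fix $\gamma(\alpha)\in\Gamma$ with $h_{\gamma(\alpha)}=f_\alpha$.

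The first step I would carry out is a sandwich/diagonal construction. Form the product directed set $I:=\Gamma\times A$ and, using directedness of $\Gamma$, pick for each $(\gamma,\alpha)\in I$ some $\mu(\gamma,\alpha)\in\Gamma$ with $\mu(\gamma,\alpha)\ge\gamma\vee\gamma(\alpha)$. Setting $g_{(\gamma,\alpha)}:=h_{\mu(\gamma,\alpha)}$, the decreasing hypothesis forces $0_F\le g_{(\gamma,\alpha)}\le h_{\gamma(\alpha)}=f_\alpha$. The lifted net $(f_\alpha)_{(\gamma,\alpha)\in I}$ is a quasi-subnet of $(f_\alpha)_{\alpha\in A}$ via the cofinal projection to $A$, so it is $\eta$-null by the first bullet of Theorem \ref{locs}; solidity then forces $(g_{(\gamma,\alpha)})_{(\gamma,\alpha)\in I}\to 0_F$.

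The second step uses the decreasing property once more to dominate tails of $(h_\gamma)$. For each $(\gamma,\alpha)\in I$ and each $\gamma'\in\Gamma$ with $\gamma'\ge\mu(\gamma,\alpha)$, monotonicity gives $h_{\gamma'}\le h_{\mu(\gamma,\alpha)}=g_{(\gamma,\alpha)}$. Forming the directed set $J:=\{(\gamma',\gamma,\alpha)\in\Gamma\times I:\gamma'\ge\mu(\gamma,\alpha)\}$ with the product order, the net $(h_{\gamma'})_{(\gamma',\gamma,\alpha)\in J}$ is term-wise dominated by the lifted null net $(g_{(\gamma,\alpha)})_{(\gamma',\gamma,\alpha)\in J}$, and solidity yields $(h_{\gamma'})_{(\gamma',\gamma,\alpha)\in J}\to 0_F$.

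The main obstacle is the final transfer back to $(h_\gamma)_{\gamma\in\Gamma}$ itself. The projection $J\to\Gamma$, $(\gamma',\gamma,\alpha)\mapsto\gamma'$, is monotone and cofinal (given $\gamma_0\in\Gamma$, choose any $(\gamma_0,\alpha_0)\in I$ and then $\gamma'\ge\gamma_0\vee\mu(\gamma_0,\alpha_0)$), so the constructed net is a subnet of $(h_\gamma)_{\gamma\in\Gamma}$. In general null convergence of a subnet does not force null convergence of the original, but here the decreasing hypothesis is decisive: every tail of $(h_\gamma)$ past $\mu(\gamma_0,\alpha_0)$ is pointwise dominated by the single value $g_{(\gamma_0,\alpha_0)}$, which itself varies through a null net as $(\gamma_0,\alpha_0)$ ranges over $I$. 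Making this uniform domination precise with one last quasi-subnet/solidity argument --- exploiting that for a decreasing net in $F_+$ each tail coincides with the solid hull below a cofinally chosen value --- closes the proof and gives $(h_\gamma)\to 0_F$.
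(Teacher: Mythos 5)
The paper does not actually spell out a proof here (it cites \cite{ectv}, Proposition 3.9, and only remarks that the argument survives in the additive setting), so I am judging your argument on its own merits. Your first two steps are correct but ultimately idle: all they produce are various quasi-subnets of $(h_\gamma)_{\gamma\in\Gamma}$ that converge to $0_F$, and, as you yourself observe, nullity of a quasi-subnet says nothing about the original net. The entire content of the lemma is therefore concentrated in your final paragraph, and there you only assert that ``one last quasi-subnet/solidity argument closes the proof'' without supplying it. The domination you have at that point has the form $0_F\le h_{\gamma}\le g_{(\gamma_0,\alpha_0)}$ for $\gamma$ in a tail depending on $(\gamma_0,\alpha_0)$, with the dominating values running over a \emph{different} index set; the second bullet of Theorem \ref{locs} only covers termwise domination by a null net over the \emph{same} index set, so it does not apply as stated. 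Also, the claim that ``each tail coincides with the solid hull below a cofinally chosen value'' is imprecise: the tail is merely contained in the order interval $\left[0_F,f_{\alpha_0}\right]$. So there is a genuine gap exactly at the step that carries all the weight.

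The missing idea is to exhibit $(h_\gamma)_{\gamma\in\Gamma}$ \emph{itself} as a quasi-subnet of a null net, so that the first bullet of Theorem \ref{locs} applies in the correct direction. Concretely, let $S=\left\{(\alpha,g):\ \alpha\in A,\ 0_F\le g\le f_\alpha\right\}$, preordered by the first coordinate; $S$ is directed, the net $(g)_{(\alpha,g)\in S}$ is termwise dominated by the quasi-subnet $(f_\alpha)_{(\alpha,g)\in S}$ of $(f_\alpha)_{\alpha\in A}$, hence is null by your bullets 1 and 2. For every $(\alpha_0,g_0)\in S$, the decreasing hypothesis gives $\left\{h_\gamma:\gamma\ge\gamma(\alpha_0)\right\}\subseteq\left[0_F,f_{\alpha_0}\right]$, and the latter set is contained in the tail of the $S$-net beyond $(\alpha_0,g_0)$; hence the tail filter of $(h_\gamma)_{\gamma\in\Gamma}$ refines that of the null net $(g)_{(\alpha,g)\in S}$, i.e. $(h_\gamma)$ is a quasi-subnet of a null net and is therefore null. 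With this in place your steps 1 and 2 can be deleted entirely: the decreasing hypothesis is used only once, to get $h_\gamma\le f_{\alpha_0}$ for all $\gamma\ge\gamma(\alpha_0)$.
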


It is known (\cite[Proposition 2.7]{vw}) that the adherence of a solid / ideal with respect to an additive locally solid convergence is a set of the same type. We denote the adherence of $A\subset F$ with respect to convergence $\eta$ by $\overline{A}^{1}_{\eta}$, and the closure -- by $\overline{A}_{\eta}$. If $\eta$ is clear from the context, it will be dropped from these notations.

We will call $G\subset F_{+}$ \emph{positively solid}, if it is a lower set in $F_{+}$, i.e. if $0_{F}\le f\le g\in G$ $\Rightarrow$ $f\in G$. If on top of that $G$ is $\vee$-closed (equivalently, directed upward), we call it a \emph{lattice-ideal} of $F_{+}$. For example, if $H\subset F$ is an ideal, then $H_{+}$ is a lattice ideal of $F_{+}$. Dually, a $\wedge$-closed upper subset of $F_{+}$ is called \emph{co-ideal} (a more standard term is \emph{filter}, but we would like to avoid confusion with a filter \textbf{on} $F$, which is a subset of $\Po\left(F\right)$). We will view lattice-ideals as increasing nets, and co-ideals -- as decreasing nets.

\begin{lemma}\label{psola}Let $G\subset F_{+}$ be positively solid and let $g\in F_{+}$. Then, $g\in \overline{G}^{1}$ iff there is $\left(g_{\alpha}\right)_{\alpha\in A}\subset G\cap\left[0_{F},g\right]$ such that $g_{\alpha}\to g$ and iff there is $\left(g_{\alpha}\right)_{\alpha\in A}\subset G$ such that $\left(g-g_{\alpha}\right)^{+}\to 0_{F}$.
\end{lemma}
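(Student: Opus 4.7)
The plan is to establish the three-way equivalence by proving a short cycle (2) $\Rightarrow$ (1) $\Rightarrow$ (3) $\Rightarrow$ (2), where (1), (2), (3) denote the three stated conditions in order. The implication (2) $\Rightarrow$ (1) is immediate: if $(g_{\alpha})\subset G\cap[0_{F},g]$ converges to $g$, then by definition $g$ lies in the adherence $\overline{G}^{1}$.

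For (1) $\Rightarrow$ (3), I would start from a net $(g_{\alpha})\subset G$ with $g_{\alpha}\to g$. By the very definition of the locally solid convergence on $F$ (see Theorem \ref{locs}), this means $|g-g_{\alpha}|\to 0_{F}$. Since $0_{F}\le(g-g_{\alpha})^{+}\le|g-g_{\alpha}|$, the second bullet of Theorem \ref{locs} (monotone domination) gives $(g-g_{\alpha})^{+}\to 0_{F}$.

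The main content is (3) $\Rightarrow$ (2), and the key trick is truncation. Given $(g_{\alpha})\subset G$ with $(g-g_{\alpha})^{+}\to 0_{F}$, I set $\tilde g_{\alpha}:=g\wedge g_{\alpha}$. Then $\tilde g_{\alpha}\in[0_{F},g]$, and since $\tilde g_{\alpha}\le g_{\alpha}\in G$ with $G$ positively solid, we also get $\tilde g_{\alpha}\in G$. The lattice identity $g-g\wedge g_{\alpha}=(g-g_{\alpha})^{+}$ shows that $g-\tilde g_{\alpha}=(g-g_{\alpha})^{+}\to 0_{F}$; as $g-\tilde g_{\alpha}\ge 0_{F}$, this is just $|g-\tilde g_{\alpha}|\to 0_{F}$, i.e. $\tilde g_{\alpha}\to g$, as required.

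I do not expect any serious obstacle. The only place where real work is done is recognizing the identity $g-g\wedge g_{\alpha}=(g-g_{\alpha})^{+}$, which is a standard vector lattice computation; everything else is a direct application of the axioms of a locally solid (additive) convergence and of the positive solidity of $G$.
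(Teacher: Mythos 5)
Your proof is correct and follows essentially the same route as the paper: the implications $(2)\Rightarrow(1)\Rightarrow(3)$ are the easy ones, and the cycle is closed by truncating via $\tilde g_{\alpha}=g\wedge g_{\alpha}$, using positive solidity of $G$ together with the identity $\left(g-g_{\alpha}\right)^{+}=g-g\wedge g_{\alpha}$. The paper merely compresses the easy implications into the remark that the second condition is the strongest and the last is the weakest, which you spell out explicitly.
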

\begin{proof}
Clearly, the second condition is the strongest, whereas the last one is the weakest. On the other hand, if $\left(g_{\alpha}\right)_{\alpha\in A}\subset G$ is such that $\left(g-g_{\alpha}\right)^{+}\to 0_{F}$, then $\left(g-g_{\alpha}\right)^{+}=g-g\wedge g_{\alpha}$ with $g\wedge g_{\alpha}\in G\cap\left[0_{F},g\right]$, for every $\alpha$. It then follows that $g=\lim\limits_{\alpha}\left(g\wedge g_{\alpha}\right)$.
\end{proof}

Combining this result with Lemma \ref{mono} we get the following.

\begin{corollary}\label{psola1}If $G\subset F_{+}$ a lattice-ideal, then $g\in \overline{G}^{1}$ iff $G\cap\left[0_{F},g\right]\to g$.
\end{corollary}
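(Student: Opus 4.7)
The plan is to combine Lemma \ref{psola} with Lemma \ref{mono} directly. The key observation is that since $G$ is a lattice-ideal, $G\cap\left[0_{F},g\right]$ is $\vee$-closed and hence is genuinely a directed-upward subset of $F_{+}$, so viewing it as an increasing net indexed by itself makes sense. Correspondingly, the net $\left\{g-h:~h\in G\cap\left[0_{F},g\right]\right\}$ is a decreasing net in $F_{+}$, and the claim $G\cap\left[0_{F},g\right]\to g$ is, by definition of a locally solid convergence via Theorem \ref{locs}, the same as this decreasing net being $\eta$-null.

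The ``if'' direction is trivial: the net $G\cap\left[0_{F},g\right]$ lies in $G$ and converges to $g$, so $g\in\overline{G}^{1}$.

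For the ``only if'' direction, assume $g\in\overline{G}^{1}$. Apply Lemma \ref{psola} (which requires only that $G$ be positively solid) to obtain a net $\left(g_{\alpha}\right)_{\alpha\in A}\subset G\cap\left[0_{F},g\right]$ with $g_{\alpha}\to g$, i.e. $g-g_{\alpha}\to 0_{F}$. Each $g-g_{\alpha}$ belongs to the set $\left\{g-h:~h\in G\cap\left[0_{F},g\right]\right\}$, so this decreasing net in $F_{+}$ contains a $\eta$-null net as a subset. Lemma \ref{mono} then yields that the whole decreasing net is $\eta$-null, which is the desired conclusion $G\cap\left[0_{F},g\right]\to g$.

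There is no real obstacle here; the only subtle point is making sure that the directed structure on $G\cap\left[0_{F},g\right]$ is well defined (that is where $\vee$-closedness of $G$ is used) so that Lemma \ref{mono} can be applied to the associated decreasing net $\left\{g-h\right\}$. Everything else is a direct appeal to the two lemmas.
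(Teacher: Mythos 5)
Your proof is correct and is exactly the argument the paper intends: the corollary is stated there as an immediate consequence of combining Lemma \ref{psola} (to extract a null net $(g-g_{\alpha})$ inside the decreasing net $\left\{g-h:~h\in G\cap\left[0_{F},g\right]\right\}$) with Lemma \ref{mono}. Your attention to the directedness of $G\cap\left[0_{F},g\right]$ coming from $\vee$-closedness is the right justification for viewing it as an increasing net.
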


Analogous result holds for co-ideals.

\begin{corollary}\label{coid}If $G\subset F_{+}$ is a co-ideal, then $G\to 0_{F}$ iff $0_{F}\in \overline{G}^{1}$.
\end{corollary}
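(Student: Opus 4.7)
The plan is to prove both directions of the equivalence, with the forward implication being essentially tautological and the converse reducing to a direct invocation of Lemma \ref{mono}.

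First I would verify that viewing $G$ as a decreasing net indexed by its reverse order is legitimate. Since a co-ideal is $\wedge$-closed, it is directed downward, so $G$ with the reversed order is an upward-directed index set, and the assignment $g\mapsto g$ gives a decreasing net in $F_{+}$ whose range is $G$ itself. The forward direction is then immediate: if this net converges to $0_{F}$, then $G$ itself witnesses $0_{F}$ as a limit of a net in $G$, so $0_{F}\in\overline{G}^{1}$.

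For the converse, I would assume $0_{F}\in\overline{G}^{1}$. By the definition of adherence there is a net $(g_{\alpha})_{\alpha\in A}\subset G$ with $g_{\alpha}\to 0_{F}$, i.e., a null net whose range is a subset of $G$, which is precisely the range of the decreasing net $G$. Lemma \ref{mono} then applies directly and produces $G\to 0_{F}$.

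The argument is essentially a one-line application of Lemma \ref{mono}; the only point needing comment is the observation that the $\wedge$-closure of a co-ideal supplies the requisite directed-downward structure. I do not anticipate any substantive obstacle, which is in line with the labeling of this statement as a corollary of Lemma \ref{mono}.
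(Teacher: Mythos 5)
Your proof is correct and matches the paper's intended argument: the paper leaves this corollary unproved, remarking only that it is the co-ideal analogue of Corollary \ref{psola1}, whose content is exactly your combination of the trivial forward direction with an application of Lemma \ref{mono} to the decreasing net $G$ containing the null net witnessing $0_{F}\in\overline{G}^{1}$. Nothing is missing.
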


\begin{proposition}\label{adh}
If $G,H\subset F_{+}$ are positively solid, then $\overline{G\cap H}^{1}=\overline{G}^{1}\cap \overline{H}^{1}$. Same is true for solid subsets of $F$.
\end{proposition}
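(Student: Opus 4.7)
The inclusion $\overline{G\cap H}^{1}\subset \overline{G}^{1}\cap \overline{H}^{1}$ is immediate from the monotonicity of the adherence, since $G\cap H$ is a positively solid set sitting inside each of $G$ and $H$. The main work is the opposite inclusion, and the plan is a direct double-approximation argument based on Lemma \ref{psola}.

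Fix $g\in \overline{G}^{1}\cap \overline{H}^{1}\subset F_{+}$. Applying Lemma \ref{psola} twice, pick nets $\left(g_{\alpha}\right)_{\alpha\in A}\subset G\cap\left[0_{F},g\right]$ with $g_{\alpha}\to g$ and $\left(h_{\beta}\right)_{\beta\in B}\subset H\cap\left[0_{F},g\right]$ with $h_{\beta}\to g$. Form the product-directed net $\left(g_{\alpha}\wedge h_{\beta}\right)_{\left(\alpha,\beta\right)\in A\times B}$. Since $g_{\alpha}\wedge h_{\beta}\le g_{\alpha}\in G$ and $g_{\alpha}\wedge h_{\beta}\le h_{\beta}\in H$, the positive solidness of $G$ and $H$ gives $g_{\alpha}\wedge h_{\beta}\in G\cap H$. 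To show this product net converges to $g$, I would use the identity $g-g_{\alpha}\wedge h_{\beta}=\left(g-g_{\alpha}\right)\vee\left(g-h_{\beta}\right)$, combined with the pointwise estimate
\[
0_{F}\le g-g_{\alpha}\wedge h_{\beta}\le \left(g-g_{\alpha}\right)+\left(g-h_{\beta}\right).
\]
The right-hand side, read along the product net, is the sum of the quasi-subnets $\left(g-g_{\alpha}\right)$ and $\left(g-h_{\beta}\right)$, each of which is null by assumption. By the closure of null nets under sums and under quasi-subnets (Theorem \ref{locs}, first and third bullets), the sum is null; then the solidness bullet transfers nullity to $g-g_{\alpha}\wedge h_{\beta}$. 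Hence $g_{\alpha}\wedge h_{\beta}\to g$, and Lemma \ref{psola} gives $g\in\overline{G\cap H}^{1}$.

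For the statement about solid subsets $A,B\subset F$, I would reduce to the positively solid case through the positive parts. First, if $A$ is solid, then $A_{+}:=A\cap F_{+}$ is positively solid in $F_{+}$, and for $g\in F_{+}$ the equivalence $g\in \overline{A}^{1}\Leftrightarrow g\in \overline{A_{+}}^{1}$ follows because local solidness ensures that if $a_{\alpha}\to g$ with $a_{\alpha}\in A$, then $\left|a_{\alpha}\right|\to g$ with $\left|a_{\alpha}\right|\in A_{+}$; conversely a net in $A_{+}$ already lies in $A$. Second, local solidness also gives that $f\in\overline{A}^{1}$ implies $\left|f\right|\in\overline{A}^{1}$ (take absolute values of an approximating net, using that $A$ is solid to stay in $A$), and the converse holds since $\overline{A}^{1}$ is solid by \cite[Proposition 2.7]{vw}. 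Combining these:
\[
f\in\overline{A}^{1}\cap\overline{B}^{1}\iff\left|f\right|\in\overline{A_{+}}^{1}\cap\overline{B_{+}}^{1}=\overline{A_{+}\cap B_{+}}^{1}=\overline{\left(A\cap B\right)_{+}}^{1}\iff f\in\overline{A\cap B}^{1},
\]
where the middle equality uses the positively solid case already proved.

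The main obstacle is really only the product-net convergence step, which requires invoking the three axioms of Theorem \ref{locs} in the right order (quasi-subnet $\to$ sum $\to$ solidness); the solid-subset reduction is routine lattice bookkeeping.
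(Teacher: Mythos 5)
Your proof is correct and follows essentially the same route as the paper: both establish the nontrivial inclusion by forming the product net $g_{\alpha}\wedge h_{\beta}$, which lies in $G\cap H$ by positive solidness and converges to the common limit, and both handle the solid case by reducing to positive parts and using solidness of the adherence. The only cosmetic difference is that the paper simply invokes continuity of $\wedge$ at the key step, whereas you re-derive it by hand from the identity $g-g_{\alpha}\wedge h_{\beta}=\left(g-g_{\alpha}\right)\vee\left(g-h_{\beta}\right)$ and the axioms of Theorem \ref{locs}.
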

\begin{proof}
Monotonicity of adherence implies $\overline{G\cap H}^{1}\subset \overline{G}^{1}, \overline{H}^{1}$. On the other hand, if $f\in \overline{G}^{1}\cap \overline{H}^{1}$, there are nets $\left(g_{\alpha}\right)_{\alpha\in A}\subset G$ and $\left(h_{\beta}\right)_{\beta\in B}\subset H$ which converge to $f$. Due to continuity of $\wedge$ we conclude that $G\cap H\ni g_{\alpha}\wedge h_{\beta}\to f$, hence $f\in \overline{G\cap H}^{1}$.

We now consider the case when $G,H\subset F$ are solid and $f\in \overline{G}^{1}\cap \overline{H}^{1}$. It is then easy to see that $\left|f\right|\in\overline{G_{+}}^{1}\cap \overline{H_{+}}^{1}=\overline{G_{+}\cap H_{+}}^{1}\subset \overline{G\cap H}^{1}$. As the latter set is solid, we conclude that $f\in \overline{G\cap H}^{1}$.
\end{proof}

An analogous fact is also true for co-ideals.

\begin{proposition}\label{sadh}
The adherence of a countably generated ideal is equal to its sequential adherence.
\end{proposition}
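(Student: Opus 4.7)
The inclusion of the sequential adherence in $\overline{I}^{1}$ is immediate, so I would focus on the reverse direction. Let $I\subset F$ be an ideal generated by a countable set $\left\{e_{n}\right\}_{n\in\N}$. After replacing $e_{n}$ with $\left|e_{1}\right|\vee\cdots\vee\left|e_{n}\right|$, I may assume the generators are positive and form an increasing sequence; then $I=\bigcup_{n}F_{e_{n}}$, and in particular every element of $I_{+}$ is dominated by some multiple $n e_{n}$.

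Fix $f\in\overline{I}^{1}$. Since the adherence of an ideal is an ideal, hence solid, $f^{+}\in\overline{I}^{1}$; moreover, approximating $f^{+}$ by a net $\left(k_{\alpha}\right)\subset I$ and passing to absolute values yields $\left|k_{\alpha}\right|\to f^{+}$ with $\left|k_{\alpha}\right|\in I_{+}$, so $f^{+}\in\overline{I_{+}}^{1}$. Because $I_{+}$ is a lattice-ideal, Corollary \ref{psola1} gives $I_{+}\cap\left[0_{F},f^{+}\right]\to f^{+}$ as an increasing net.

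The heart of the argument is to extract from this net the explicit sequence $g_{n}:=f^{+}\wedge n e_{n}$. I would verify three things: $g_{n}\in I_{+}\cap\left[0_{F},f^{+}\right]$; the sequence is increasing, since $n e_{n}\leq(n+1)e_{n+1}$; and for every $h\in I_{+}\cap\left[0_{F},f^{+}\right]$, countable generation provides $m$ with $h\leq m e_{m}$, whence $h\leq f^{+}\wedge m e_{m}=g_{m}$, and by monotonicity $h\leq g_{n}$ for all $n\geq m$. Thus $\left(g_{n}\right)$ is a cofinal (quasi-)subnet of the convergent net $I_{+}\cap\left[0_{F},f^{+}\right]\to f^{+}$, and the quasi-subnet property recalled in Theorem \ref{locs} yields $g_{n}\to f^{+}$. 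The symmetric argument applied to $f^{-}$ produces $f^{-}\wedge n e_{n}\to f^{-}$, so the sequence $g_{n}-(f^{-}\wedge n e_{n})$ lies in $I$ and converges to $f^{+}-f^{-}=f$.

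The only delicate point is the extraction of a sequence from a net, and it is resolved essentially by the countable generation hypothesis: it is precisely this hypothesis that forces $n e_{n}$ to eventually dominate any prescribed element of $I_{+}$, supplying the cofinality needed to promote the net convergence to sequential convergence. Without countable generation there is no such universal sequence of bounds, and no canonical reason for the adherence to be sequentially described.
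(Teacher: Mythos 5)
Your proof is correct and follows essentially the same route as the paper's: both identify $\left\{f\wedge ne_{n}\right\}_{n\in\N}$ as a cofinal (hence convergent) sequence in the increasing net $E\cap\left[0_{F},f\right]\to f$ supplied by Corollary \ref{psola1}. The paper simply treats the positive case and leaves the reduction via solidity of the adherence implicit, whereas you spell out the $f^{+}$, $f^{-}$ decomposition.
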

\begin{proof}
Assume that an ideal $E\subset F$ is generated by an increasing sequence $\left\{e_{n}\right\}_{n\in\N}$ and fix $0_{F}\le f\in\overline{E}^{1}$, so that  $E\cap\left[0_{F},f\right]\to f$. We have that $E=\bigcup\limits_{n\in\N}F_{e_{n}}$, and so for every $e\in E$ there are $m,n\in\N$ such that $e\le me_{n}$. Taking $m\vee n$ leads to the conclusion that $\left\{f\wedge ne_{n}\right\}_{n\in\N}$ is co-final in $E\cap\left[0_{F},f\right]$, hence converges to $f$.
\end{proof}

\section{Composition of locally solid convergences}

Throughout the section, $F$ is a vector lattice. Let $\eta$ and $\theta$ be locally solid additive convergences on $F$. Following \cite[Section 10]{ectv} we define a new convergence $\eta\theta$ as follows: for a net $\left(f_{\alpha}\right)_{\alpha\in A}\subset F_{+}$ we will say that $f_{\alpha}\xrightarrow[\alpha]{\eta\theta} 0_{F}$ if there is a \emph{controlling} $\theta$-null net $\left(g_{\beta}\right)_{\beta\in B}\subset F_{+}$ such that $\left(f_{\alpha}-g_{\beta}\right)^{+}\xrightarrow[]{\eta} 0_{F}$, for every $\beta$. Let us summarize some basic facts about $\eta\theta$.

\begin{proposition}\label{produc}Let $\eta$ and $\theta$ be locally solid additive convergences on $F$. Then:
\item[(i)] $\eta\theta$ is a locally solid additive convergence, which is weaker than $\eta$.
\item[(ii)] Every monotone $\theta$-convergent net is $\eta\theta$-convergent. If $\theta$ is order continuous, so is $\eta\theta$.
\item[(iii)] $\eta\theta$ is Hausdorff iff both $\eta,\theta$ are.
\item[(iv)] If one of $\eta,\theta$ is linear, then $\eta\theta$ is linear.
\end{proposition}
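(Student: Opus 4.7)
The plan is to apply Theorem~\ref{locs} in each of the four clauses, verifying its bullet-point hypotheses for the $\eta\theta$-null prescription on $F_{+}$. For~(i), non-triviality and the quasi-subnet condition are inherited from $\eta$ by reusing the same controlling net. Solidity is immediate from the pointwise inequality $(h_{\alpha}-g_{\beta})^{+}\le(f_{\alpha}-g_{\beta})^{+}$ combined with solidity of $\eta$. Additivity is the only step requiring care: given controlling nets $(g_{\beta})_{\beta\in B}$ and $(k_{\gamma})_{\gamma\in\Gamma}$ for two $\eta\theta$-null nets, I form the product-indexed net $(g_{\beta}+k_{\gamma})_{(\beta,\gamma)\in B\times\Gamma}$, which is $\theta$-null by additivity of $\theta$, then apply $(f_{\alpha}+h_{\alpha}-g_{\beta}-k_{\gamma})^{+}\le(f_{\alpha}-g_{\beta})^{+}+(h_{\alpha}-k_{\gamma})^{+}$ together with additivity of $\eta$. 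That $\eta\theta\le\eta$ follows by taking the zero controlling net.

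For~(ii), I reduce a monotone $\theta$-convergent net to the case $(f_{\alpha})_{\alpha\in A}\subset F_{+}$ with $f_{\alpha}\downarrow 0_{F}$ and $f_{\alpha}\xrightarrow{\theta}0_{F}$, then use this very net as its own controlling net: for each fixed $\beta\in A$ the net $(f_{\alpha}-f_{\beta})^{+}$ is eventually $0_{F}$, hence $\eta$-null trivially. Order continuity of $\eta\theta$ (assuming it of $\theta$) then follows: if $|f_{\alpha}-f|\le h_{\alpha}\downarrow 0_{F}$, then $h_{\alpha}\xrightarrow{\theta}0_{F}$; by the first half of~(ii), $h_{\alpha}\xrightarrow{\eta\theta}0_{F}$; and local solidity of $\eta\theta$ from~(i) transfers this to $|f_{\alpha}-f|$.

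For~(iii) I use the Hausdorff criterion in Theorem~\ref{locs} to reduce to constant positive nets. The forward direction splits: since $\eta\ge\eta\theta$, Hausdorffness of $\eta\theta$ passes to $\eta$ immediately, and if $0_{F}<f$ had $f\xrightarrow{\theta}0_{F}$ then the constant controlling net $g_{\beta}\equiv f$ would force $f\xrightarrow{\eta\theta}0_{F}$. For the converse, if $f>0_{F}$ and $f\xrightarrow{\eta\theta}0_{F}$ with controlling $(g_{\beta})$, constancy of $f$ makes each $(f-g_{\beta})^{+}$ a constant $\eta$-null net, so $(f-g_{\beta})^{+}=0_{F}$ by Hausdorffness of $\eta$, whence $f\le g_{\beta}$ for every $\beta$; local solidity of $\theta$ then yields $f\xrightarrow{\theta}0_{F}$, contradicting Hausdorffness of $\theta$. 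For~(iv), I apply the linearity criterion in Theorem~\ref{locs} and reduce to $\frac{1}{n}f\xrightarrow{\eta\theta}0_{F}$ for every $f\in F_{+}$: if $\eta$ is linear use~(i); if $\theta$ is linear, the sequence $\left(\frac{1}{n}f\right)_{n\in\N}$ is decreasing and $\theta$-null, so (ii) finishes the job.

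No clause poses a deep obstacle; each is a routine manipulation of the defining condition against the axioms of Theorem~\ref{locs}. The point requiring the most care is additivity in~(i), with its product-indexed controlling net, while the Hausdorff converse in~(iii) rests on the simple observation that for a constant positive net, $\eta$-convergence to $0_{F}$ under a Hausdorff $\eta$ forces the constant value itself to equal $0_{F}$.
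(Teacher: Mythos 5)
Your parts (i), (iii) and (iv) are correct. For comparison: the paper does not actually write out proofs of (i)--(iii) — it cites \cite[Propositions 10.11--10.13]{ectv} and notes that the arguments survive in the additive setting — and its proof of (iv) is identical to yours. So your self-contained verification of the hypotheses of Theorem~\ref{locs} in (i) (in particular the product-indexed controlling net $\left(g_{\beta}+k_{\gamma}\right)_{(\beta,\gamma)\in B\times\Gamma}$ for additivity) and your two-sided argument in (iii) are a genuine addition, and both are sound.

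The gap is in (ii). You assert that a monotone $\theta$-convergent net may be reduced to a positive net with $f_{\alpha}\downarrow 0_{F}$ and $f_{\alpha}\xrightarrow{\theta}0_{F}$, but that reduction is precisely the nontrivial content and is not justified. First, ``$f_{\alpha}\downarrow 0_{F}$'' conflates $\theta$-convergence with order convergence to the infimum; a decreasing $\theta$-convergent net need not converge to a lower bound when $\theta$ is not Hausdorff. Second, and more seriously, if $\left(f_{\alpha}\right)_{\alpha\in A}$ is \emph{increasing} and $\theta$-converges to $f$, then $\left|f_{\alpha}-f\right|=\left(f_{\alpha}-f\right)^{+}+\left(f-f_{\alpha}\right)^{+}$, and the summand $\left(f_{\alpha}-f\right)^{+}$ is an \emph{increasing} positive $\theta$-null net; your device of using the net as its own controlling net fails there, since for $\alpha\ge\beta$ one has $\left(\left(f_{\alpha}-f\right)^{+}-\left(f_{\beta}-f\right)^{+}\right)^{+}=\left(f_{\alpha}-f\right)^{+}-\left(f_{\beta}-f\right)^{+}$, which is not eventually $0_{F}$. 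What is missing is an argument that the limit of a monotone $\theta$-convergent net is an almost eventual upper (resp.\ lower) bound in the relevant direction — compare part (iii) of Proposition~\ref{aeub}, which establishes exactly this kind of statement under a Hausdorff hypothesis. As written, your argument only proves the special case that a decreasing net in $F_{+}$ which is $\theta$-null is $\eta\theta$-null; that special case is what (iv), Theorem~\ref{intd} and the rest of the paper actually use, but it is not the full claim of (ii). The second half of (ii) (order continuity) is fine, and is in fact more direct than you make it: if $h_{\gamma}\downarrow 0_{F}$ dominates the tails of $\left|f_{\alpha}-f\right|$, then $\left(h_{\gamma}\right)$ itself is a $\theta$-null controlling net witnessing $\left|f_{\alpha}-f\right|\xrightarrow{\eta\theta}0_{F}$.
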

\begin{proof}
(i)-(iii) were proven in \cite[Propositions 10.11-10.13]{ectv} for locally solid linear convergences, but the proof goes through for the additive case.

(iv): Let $e\in F_{+}$. If $\eta$ is linear, $\frac{1}{n}e\xrightarrow[]{\eta} 0_{F}$, hence $\frac{1}{n}e\xrightarrow[]{\eta\theta} 0_{F}$, according to (i). If $\theta$ is linear, $\frac{1}{n}e\xrightarrow[]{\theta} 0_{F}$, hence $\frac{1}{n}e\xrightarrow[]{\eta\theta} 0_{F}$, according to (ii). In either case it follows from Theorem \ref{locs} that $\eta\theta$ is linear.
\end{proof}

We will call $f\in F$ an \emph{$\eta$-almost eventual upper bound} for a net $\left(f_{\alpha}\right)_{\alpha\in A}\subset F$ if $\left(f_{\alpha}-f\right)^{+}\xrightarrow[]{\eta}0_{F}$. Modifying a notation from \cite{erz}, we will denote the collection of all $\eta$-almost eventual upper bounds of this net by $\left(f_{\alpha}\right)_{\alpha\in A}^{\nearrow,\eta}$. \emph{$\eta$-almost eventual lower bounds} are defined analogously and their collection is denoted by $\left(f_{\alpha}\right)_{\alpha\in A}^{\searrow,\eta}$. Clearly, $f_{\alpha}\xrightarrow[]{\eta}f$ iff $f\in \left(f_{\alpha}\right)_{\alpha\in A}^{\nearrow,\eta}\cap \left(f_{\alpha}\right)_{\alpha\in A}^{\searrow,\eta}$, and for a positive net $f_{\alpha}\xrightarrow[]{\eta}0_{F}$ iff $0_{F}\in\left(f_{\alpha}\right)_{\alpha\in A}^{\nearrow,\eta}$. If $\eta$ is a discrete convergence, then $f$ is an $\eta$-almost eventual upper (lower) bound iff it is an eventual upper (lower) bound.

\begin{proposition}\label{aeub}
Let $\eta$ and $\theta$ be locally solid additive convergences on $F$ and let $\left(f_{\alpha}\right)_{\alpha\in A}\subset F$. Then:
\item[(i)] $\left(f_{\alpha}\right)_{\alpha\in A}^{\nearrow,\eta}$ is a co-ideal in $F$.
\item[(ii)] $\left(f_{\alpha}\right)_{\alpha\in A}^{\nearrow,\eta\theta}=\overline{\left(f_{\alpha}\right)_{\alpha\in A}^{\nearrow,\eta}}^{1}_{\theta}$.
\item[(iii)] If $\eta$ is Hausdorff then $\left(f_{\alpha}\right)_{\alpha\in A}^{\searrow,\eta}\le\left(f_{\alpha}\right)_{\alpha\in A}^{\nearrow,\eta}$. In particular, if $\left(f_{\alpha}\right)_{\alpha\in A}\subset F_{+}$, then $\left(f_{\alpha}\right)_{\alpha\in A}^{\nearrow,\eta}\subset F_{+}$.
\end{proposition}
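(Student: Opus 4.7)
The plan is to treat the three parts separately, with (ii) being the main content; (i) and (iii) are short lattice calculations.

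For (i), the upper-set property follows from the monotonicity of $(\cdot)^{+}$: if $g\ge f\in\left(f_{\alpha}\right)_{\alpha\in A}^{\nearrow,\eta}$, then $0_{F}\le\left(f_{\alpha}-g\right)^{+}\le\left(f_{\alpha}-f\right)^{+}\xrightarrow[]{\eta}0_{F}$, so local solidity (second bullet of Theorem \ref{locs}) gives $g\in\left(f_{\alpha}\right)_{\alpha\in A}^{\nearrow,\eta}$. For $\wedge$-closure, if $f,g$ are both $\eta$-almost eventual upper bounds, then $\left(f_{\alpha}-f\wedge g\right)^{+}=\left(f_{\alpha}-f\right)^{+}\vee\left(f_{\alpha}-g\right)^{+}\le\left(f_{\alpha}-f\right)^{+}+\left(f_{\alpha}-g\right)^{+}$, which is $\eta$-null by the third bullet of Theorem \ref{locs} combined with another application of local solidity.

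For (ii), I would prove both inclusions by producing an explicit witness. For $\supset$, suppose $\left(g_{\beta}\right)_{\beta\in B}\subset\left(f_{\alpha}\right)_{\alpha\in A}^{\nearrow,\eta}$ with $g_{\beta}\xrightarrow[]{\theta}g$; by local solidity $\left|g_{\beta}-g\right|\xrightarrow[]{\theta}0_{F}$, and I claim this is a controlling net. Indeed, from $\left(f_{\alpha}-g\right)^{+}\le\left(f_{\alpha}-g_{\beta}\right)^{+}+\left(g_{\beta}-g\right)^{+}$ we get
\[
\bigl(\left(f_{\alpha}-g\right)^{+}-\left|g_{\beta}-g\right|\bigr)^{+}\le \bigl(\left(f_{\alpha}-g\right)^{+}-\left(g_{\beta}-g\right)^{+}\bigr)^{+}\le\left(f_{\alpha}-g_{\beta}\right)^{+}\xrightarrow[\alpha]{\eta}0_{F},
\]
so $g\in\left(f_{\alpha}\right)_{\alpha\in A}^{\nearrow,\eta\theta}$. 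For $\subset$, pick $g$ with a controlling $\theta$-null net $\left(h_{\beta}\right)_{\beta\in B}\subset F_{+}$ satisfying $\bigl(\left(f_{\alpha}-g\right)^{+}-h_{\beta}\bigr)^{+}\xrightarrow[\alpha]{\eta}0_{F}$ for every $\beta$. Set $g_{\beta}:=g+h_{\beta}$; then $g_{\beta}\xrightarrow[]{\theta}g$, and from $\left(f_{\alpha}-g\right)-h_{\beta}\le\left(f_{\alpha}-g\right)^{+}-h_{\beta}$ we obtain $\left(f_{\alpha}-g_{\beta}\right)^{+}\le\bigl(\left(f_{\alpha}-g\right)^{+}-h_{\beta}\bigr)^{+}\xrightarrow[\alpha]{\eta}0_{F}$, so $g_{\beta}\in\left(f_{\alpha}\right)_{\alpha\in A}^{\nearrow,\eta}$ for each $\beta$; hence $g\in\overline{\left(f_{\alpha}\right)_{\alpha\in A}^{\nearrow,\eta}}^{1}_{\theta}$. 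The only slightly delicate point is remembering to take a positive part to absorb the inequality $\left|g_{\beta}-g\right|\ge\left(g_{\beta}-g\right)^{+}$ in the first direction.

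For (iii), given $f\in\left(f_{\alpha}\right)_{\alpha\in A}^{\searrow,\eta}$ and $g\in\left(f_{\alpha}\right)_{\alpha\in A}^{\nearrow,\eta}$, the inequality $f-g=\left(f-f_{\alpha}\right)+\left(f_{\alpha}-g\right)\le\left(f-f_{\alpha}\right)^{+}+\left(f_{\alpha}-g\right)^{+}$ yields $0_{F}\le\left(f-g\right)^{+}\le\left(f-f_{\alpha}\right)^{+}+\left(f_{\alpha}-g\right)^{+}\xrightarrow[\alpha]{\eta}0_{F}$, so the constant net $\left(f-g\right)^{+}$ is $\eta$-null, and Hausdorffness forces $\left(f-g\right)^{+}=0_{F}$, i.e.\ $f\le g$. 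The ``in particular'' part is immediate: when $\left(f_{\alpha}\right)_{\alpha\in A}\subset F_{+}$, we have $0_{F}\in\left(f_{\alpha}\right)_{\alpha\in A}^{\searrow,\eta}$ (the relevant positive parts vanish identically), so every $\eta$-almost eventual upper bound is nonnegative.
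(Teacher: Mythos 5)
Your proposal is correct and follows essentially the same route as the paper's proof: the same co-ideal identities in (i), the same two witnesses in (ii) (the paper uses the controlling net $\left(g_{\beta}-g\right)^{+}$ where you use $\left|g_{\beta}-g\right|$, and the same shift $g+h_{\beta}$ for the converse), and the same triangle-type inequality plus Hausdorffness in (iii). All the lattice inequalities you invoke check out, so there is nothing to add.
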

\begin{proof}
Let $G:=\left(f_{\alpha}\right)_{\alpha\in A}^{\nearrow,\eta}$.

(i): It is easy to see that if $h\ge g\in G$, then $h\in G$. If $g,h\in G$, then $\left(f_{\alpha}-g\wedge h\right)^{+}=\left(f_{\alpha}-g\right)^{+}\vee \left(f_{\alpha}-h\right)^{+}\xrightarrow[]{\eta} 0_{F}$, hence, $g\wedge h\in G$.\medskip

(ii): Assume that $g\in \overline{G}^{1}_{\theta}$, so that there is a net $\left(g_{\beta}\right)_{\beta\in B}\subset G$ which $\theta$-converges to $g$. We have $\left(\left(f_{\alpha}-g\right)^{+}-\left(g_{\beta}-g\right)^{+}\right)^{+}\le \left(f_{\alpha}-g_{\beta}\right)^{+}\xrightarrow[]{\eta}0_{F}$, for every $\beta$, hence $\left(f_{\alpha}-g\right)^{+}\xrightarrow[]{\eta\theta}0_{F}$ with a controlling net $\left(\left(g_{\beta}-g\right)^{+}\right)_{\beta\in B}$.

Conversely, assume that $\left(f_{\alpha}-g\right)^{+}\xrightarrow[]{\eta\theta}0_{F}$ with a controlling net $\left(h_{\beta}\right)_{\beta\in B}$. For every $\beta$ we have $\left(f_{\alpha}-g-h_{\beta}\right)^{+}=\left(\left(f_{\alpha}-g\right)^{+}-h_{\beta}\right)^{+}\xrightarrow[]{\eta}0_{F}$, hence $g+h_{\beta}\in G$. Since $g+h_{\beta}\xrightarrow[]{\theta}g$, we conclude that $g\in \overline{G}^{1}_{\theta}$.\medskip

(iii): If $f\in\left(f_{\alpha}\right)_{\alpha\in A}^{\searrow,\eta}$ and $g\in\left(f_{\alpha}\right)_{\alpha\in A}^{\nearrow,\eta}$, then $\left(f-g\right)^{+}\le \left(f-f_{\alpha}\right)^{+}+\left(f_{\alpha}-g\right)^{+}\xrightarrow[]{\eta}0_{F}$, hence $\left(f-g\right)^{+}=0_{F}$.
\end{proof}

Let us consider some alternative ways of describing the $\eta\theta$-convergence. First, note that the controlling net can be replaced with any of its quasi-subnets. Moreover, let us show that this net can be assumed to be a co-ideal.

\begin{proposition}\label{prod}Let $\eta$ and $\theta$ be locally solid additive convergences on $F$. For a net $\left(f_{\alpha}\right)_{\alpha\in A}\subset F_{+}$ TFAE:
\item[(i)] $f_{\alpha}\xrightarrow[]{\eta\theta} 0_{F}$;
\item[(ii)] There are nets $\left(h_{\alpha\beta}\right)_{\alpha\in A,~\beta\in B}$ and $\left(g_{\beta}\right)_{\beta\in B}$ such that $f_{\alpha}\le h_{\alpha\beta}\xrightarrow[\alpha]{\eta}g_{\beta}\xrightarrow[]{\theta}0_{F}$;
\item[(iii)] There is a $\theta$-null co-ideal $G\subset F_{+}$ such that $\left(f_{\alpha}-g\right)^{+}\xrightarrow[]{\eta} 0_{F}$, for every $g\in G$;
\item[(iv)] $0_{F}\in\overline{\left(f_{\alpha}\right)_{\alpha\in A}^{\nearrow,\eta}}^{1}_{\theta}$.\medskip

Moreover, for a filter $\Fo$ on $F$ we have $\Fo\xrightarrow[]{\eta\theta} 0_{F}$ iff there is a (decreasing) $\theta$-null net $\left(g_{\beta}\right)_{\beta\in B}\subset F_{+}$ such that $\left(\left|\Fo\right|-g_{\beta}\right)^{+}\xrightarrow[]{\eta} 0_{F}$, for every $\beta$.
\end{proposition}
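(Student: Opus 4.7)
The plan is as follows. The equivalence (i) $\Leftrightarrow$ (iv) is essentially free from Proposition \ref{aeub}(ii): for a positive net, $f_{\alpha}\xrightarrow{\eta\theta} 0_{F}$ is precisely the statement $0_{F}\in\left(f_{\alpha}\right)_{\alpha\in A}^{\nearrow,\eta\theta}$, and the latter set coincides with $\overline{\left(f_{\alpha}\right)_{\alpha\in A}^{\nearrow,\eta}}^{1}_{\theta}$. The substantive task is then to establish (i) $\Leftrightarrow$ (ii) and (i) $\Leftrightarrow$ (iii), after which the filter clause falls out.

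For (i) $\Rightarrow$ (ii), given a controlling $\theta$-null net $\left(g_{\beta}\right)_{\beta\in B}\subset F_{+}$, I would set $h_{\alpha\beta}:=f_{\alpha}\vee g_{\beta}$. Then $f_{\alpha}\le h_{\alpha\beta}$ and $\left|h_{\alpha\beta}-g_{\beta}\right|=\left(f_{\alpha}-g_{\beta}\right)^{+}\xrightarrow[\alpha]{\eta} 0_{F}$, yielding the chain $f_{\alpha}\le h_{\alpha\beta}\xrightarrow[\alpha]{\eta}g_{\beta}\xrightarrow{\theta}0_{F}$. Conversely, for (ii) $\Rightarrow$ (i), the chain
\[(f_{\alpha}-g_{\beta}^{+})^{+}\le (f_{\alpha}-g_{\beta})^{+}\le (h_{\alpha\beta}-g_{\beta})^{+}\le \left|h_{\alpha\beta}-g_{\beta}\right|\xrightarrow[\alpha]{\eta} 0_{F},\]
together with $g_{\beta}^{+}\in F_{+}$ and $g_{\beta}^{+}\xrightarrow{\theta}0_{F}$ (by local solidity of $\theta$), identifies $(g_{\beta}^{+})$ as a controlling net witnessing (i).

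The crux is (i) $\Leftrightarrow$ (iii). The direction (iii) $\Rightarrow$ (i) is immediate, as the co-ideal $G$ viewed as a decreasing net indexed by itself is itself a controlling $\theta$-null net. For (i) $\Rightarrow$ (iii) I would take $G:=\left(f_{\alpha}\right)_{\alpha\in A}^{\nearrow,\eta}\cap F_{+}$: by Proposition \ref{aeub}(i) this set is $\wedge$-closed, and since $\left(f_{\alpha}\right)_{\alpha\in A}^{\nearrow,\eta}$ is an upper set, so is its intersection with $F_{+}$ inside $F_{+}$; thus $G$ is a co-ideal in $F_{+}$, and by construction each $g\in G$ satisfies $\left(f_{\alpha}-g\right)^{+}\xrightarrow{\eta} 0_{F}$. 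The main obstacle is to verify that $G$ is itself $\theta$-null. For this, fix any controlling $\theta$-null net $\left(g_{\beta}\right)_{\beta\in B}\subset F_{+}$ provided by (i); each $g_{\beta}$ belongs to $G$, so the decreasing net $G$ contains the $\theta$-null subset $\{g_{\beta}:\beta\in B\}$, and Lemma \ref{mono} applied with $\theta$ in place of $\eta$ gives $G\xrightarrow{\theta} 0_{F}$.

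Finally, for the filter clause, since $\eta\theta$ is locally solid we have $\Fo\xrightarrow{\eta\theta} 0_{F}$ iff $\left|\Fo\right|\xrightarrow{\eta\theta} 0_{F}$; applying the equivalence (i) $\Leftrightarrow$ (iii) to a positive net representing $\left|\Fo\right|$ produces a $\theta$-null co-ideal, which is in particular a decreasing $\theta$-null net in $F_{+}$ with the desired property, while the converse reads off the definition of $\eta\theta$-convergence.
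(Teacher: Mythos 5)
Your proposal is correct, and at bottom it is the paper's argument: the choice $h_{\alpha\beta}:=f_{\alpha}\vee g_{\beta}$ is literally the paper's $g_{\beta}+\left(f_{\alpha}-g_{\beta}\right)^{+}$, the canonical co-ideal $G:=\left(f_{\alpha}\right)_{\alpha\in A}^{\nearrow,\eta}\cap F_{+}$ is the same witness the paper uses for (iii), and (i) $\Leftrightarrow$ (iv) is read off from Proposition \ref{aeub}(ii) in both. The one place where your route differs is how the $\theta$-nullity of $G$ is obtained: the paper closes the cycle as (i) $\Rightarrow$ (iv) $\Rightarrow$ (iii) $\Rightarrow$ (i), deducing $0_{F}\in\overline{G}^{1}_{\theta}$ from the co-ideal version of Proposition \ref{adh} and then invoking Corollary \ref{coid}, whereas you go from (i) to (iii) directly by noting that the controlling net $\left(g_{\beta}\right)_{\beta\in B}$ sits inside $G$ and applying Lemma \ref{mono}. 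Since Corollary \ref{coid} is itself a consequence of Lemma \ref{mono}, the two arguments coincide once unwound; yours is marginally more self-contained, while the paper's phrasing makes the adherence formulation (iv) do the work. Two small credits: you are right to replace $g_{\beta}$ by $g_{\beta}^{+}$ in (ii) $\Rightarrow$ (i), since the statement of (ii) does not require $g_{\beta}\ge 0_{F}$ while the definition of a controlling net does. For the filter clause you could be slightly more explicit that the translation rests on the identity $\left(\left|\Fo\right|-g_{\beta}\right)^{+}=\left[\left(\left|f_{\alpha}\right|-g_{\beta}\right)^{+},~\alpha\in A\right]$ for a net $\left(f_{\alpha}\right)_{\alpha\in A}$ generating $\Fo$, but this is the routine net--filter dictionary and not a gap.
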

\begin{proof}
(i)$\Rightarrow$(ii): If $f_{\alpha}\xrightarrow[]{\eta\theta} 0_{F}$, then $f_{\alpha}\le g_{\beta}+\left(f_{\alpha}-g_{\beta}\right)^{+}\xrightarrow[\alpha]{\eta}g_{\beta}\xrightarrow[]{\theta}0_{F}$, where $\left(g_{\beta}\right)_{\beta\in B}\subset F_{+}$ is a controlling net.

(ii)$\Rightarrow$(i): If $f_{\alpha}\le h_{\alpha\beta}\xrightarrow[\alpha]{\eta}g_{\beta}\xrightarrow[]{\theta}0_{F}$, then $\left(f_{\alpha}-g_{\beta}\right)^{+}\le\left(h_{\alpha\beta}-g_{\beta}\right)^{+} \xrightarrow[\alpha]{\eta}0_{F}$.\medskip

(i)$\Rightarrow$(iv) follows from part (ii) of Proposition \ref{aeub}, and (iii)$\Rightarrow$(i) is trivial.

(iv)$\Rightarrow$(iii): Let $G:=\left(f_{\alpha}\right)_{\alpha\in A}^{\nearrow,\eta}\cap F_{+}$, which is a co-ideal in $F_{+}$. From the co-ideal version of Proposition \ref{adh} it follows that $0_{F}\in \overline{G}^{1}_{\eta}$. According to Corollary \ref{coid}, this yields $G\xrightarrow[]{\theta} 0_{F}$.\medskip

We are left with proving the filter version. Let $\Fo=\left[f_{\alpha},~\alpha\in A\right]$, for some net $\left(f_{\alpha}\right)_{\alpha\in A}\subset F$. Then, $\Fo\xrightarrow[]{\eta\theta} 0_{F}$ $\Leftrightarrow$ $f_{\alpha}\xrightarrow[]{\eta\theta} 0_{F}$ $\Leftrightarrow$ $\left|f_{\alpha}\right|\xrightarrow[]{\eta\theta} 0_{F}$ $\Leftrightarrow$ there is a (decreasing) $\theta$-null net $\left(g_{\beta}\right)_{\beta\in B}\subset F_{+}$ such that $\left(\left|f_{\alpha}\right|-g_{\beta}\right)^{+}\xrightarrow[]{\eta} 0_{F}$, for every $\beta$. It is left to notice that $\left(\left|\Fo\right|-g_{\beta}\right)^{+}=\left[\left(\left|f_{\alpha}\right|-g_{\beta}\right)^{+},~\alpha\in A\right]$, for every $\beta$.
\end{proof}

It now follows that the only information passed down from $\theta$ are the decreasing $\theta$-null nets, or even more precisely, $\theta$-null co-ideals.

\begin{corollary}\label{mon}
Let $\eta,\eta_{1},\eta_{2},\theta_{1},\theta_{2}$ be locally solid additive convergences on $F$. Then:
\item[(i)] If $\eta_{1}\ge \eta_{2}$ and $\theta_{1}\ge\theta_{2}$, then $\eta_{1}\theta_{1}\ge \eta_{2}\theta_{2}$.
\item[(ii)] If $\theta_{1},\theta_{2}$ have the same null co-ideals in $F_{+}$ then $\eta\theta_{1}=\eta\theta_{2}$.
\end{corollary}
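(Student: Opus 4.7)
The plan is to deduce both statements directly from the characterizations of $\eta\theta$-convergence provided by Proposition \ref{prod}. Since $\eta\theta$ is locally solid additive (Proposition \ref{produc}(i)), the convergence is determined by its null nets in $F_{+}$, so in both parts it suffices to compare the positive null nets.

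For part (i), I would start with a positive net $\left(f_{\alpha}\right)_{\alpha\in A}\subset F_{+}$ satisfying $f_{\alpha}\xrightarrow{\eta_{1}\theta_{1}} 0_{F}$ and unpack the definition: there is a $\theta_{1}$-null controlling net $\left(g_{\beta}\right)_{\beta\in B}\subset F_{+}$ such that $\left(f_{\alpha}-g_{\beta}\right)^{+}\xrightarrow[\alpha]{\eta_{1}}0_{F}$ for every $\beta$. The hypothesis $\eta_{1}\ge\eta_{2}$ upgrades each of these convergences to $\eta_{2}$, while $\theta_{1}\ge\theta_{2}$ ensures that the same controlling net is also $\theta_{2}$-null. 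Hence $\left(g_{\beta}\right)_{\beta\in B}$ witnesses $f_{\alpha}\xrightarrow{\eta_{2}\theta_{2}}0_{F}$, and we are done.

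For part (ii), I would appeal to the equivalence (i)$\Leftrightarrow$(iii) in Proposition \ref{prod}: a positive net satisfies $f_{\alpha}\xrightarrow{\eta\theta_{i}}0_{F}$ precisely when there exists a $\theta_{i}$-null co-ideal $G\subset F_{+}$ such that $\left(f_{\alpha}-g\right)^{+}\xrightarrow{\eta}0_{F}$ for every $g\in G$. The right-hand side depends on $\theta_{i}$ only through the collection of co-ideals of $F_{+}$ that are $\theta_{i}$-null. Since $\theta_{1}$ and $\theta_{2}$ have the same null co-ideals in $F_{+}$ by hypothesis, the two conditions coincide, so the positive null nets of $\eta\theta_{1}$ and $\eta\theta_{2}$ are identical and thus $\eta\theta_{1}=\eta\theta_{2}$.

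I do not anticipate a real obstacle: both parts are bookkeeping built on top of Proposition \ref{prod}. The only point worth flagging is the reduction to positive nets, which is immediate from $\eta\theta$ being locally solid additive and therefore determined by its null convergences from $F_{+}$. The key takeaway, highlighted by the corollary, is that (ii) isolates the specific piece of information $\eta\theta$ extracts from $\theta$, namely its $F_{+}$-null co-ideals.
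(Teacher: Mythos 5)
Your proposal is correct and matches the paper's intent exactly: the paper states this corollary without proof as an immediate consequence of Proposition \ref{prod}, with part (i) following from the definition via controlling nets and part (ii) from the equivalence (i)$\Leftrightarrow$(iii) there, just as you argue. The reduction to positive null nets is the standard one for locally solid additive convergences, so nothing is missing.
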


Using the condition (iv) in Proposition \ref{prod} and the fact that the adherence with respect to the supremum of a family of convergences is the intersection of the adherences with respect to each member of the family, we arrive at the following result.

\begin{proposition}
Let $\eta$ be a locally solid additive convergences on $F$, and let $\Theta$ be a collection of locally solid additive convergences on $F$. Then, $\eta\bigvee\Theta=\bigvee\limits_{\theta\in\Theta}\eta\theta$.
\end{proposition}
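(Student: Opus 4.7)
The plan is to prove both inequalities separately, the easy one from monotonicity and the substantive one by routing through Proposition \ref{prod}(iv) and Corollary \ref{coid}.

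For the inequality $\eta\bigvee\Theta\ge\bigvee_{\theta\in\Theta}\eta\theta$, I would simply invoke Corollary \ref{mon}(i): for each $\theta\in\Theta$ we have $\bigvee\Theta\ge\theta$, so $\eta\bigvee\Theta\ge\eta\theta$, and taking the supremum on the right yields the claim.

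For the reverse inequality, by local solidity it suffices to compare convergences of positive nets to $0_F$. Suppose $(f_\alpha)_{\alpha\in A}\subset F_+$ satisfies $f_\alpha\xrightarrow{\eta\theta}0_F$ for every $\theta\in\Theta$. Mimicking the passage from (iv) to (iii) in the proof of Proposition \ref{prod}, set $G:=(f_\alpha)_{\alpha\in A}^{\nearrow,\eta}\cap F_+$, which is a co-ideal in $F_+$ (it is $\wedge$-closed and upper in $F_+$ because $(f_\alpha)_{\alpha\in A}^{\nearrow,\eta}$ has these properties in $F$ by Proposition \ref{aeub}(i)). For each $\theta\in\Theta$, Proposition \ref{prod}(iv) gives $0_F\in\overline{(f_\alpha)_{\alpha\in A}^{\nearrow,\eta}}^{1}_{\theta}$, and replacing the witnessing net by its positive part (which still lies in the co-ideal $(f_\alpha)_{\alpha\in A}^{\nearrow,\eta}$ and $\theta$-converges to $0_F$ by local solidity) yields $0_F\in\overline{G}^{1}_{\theta}$. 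Corollary \ref{coid} then upgrades this to $G\xrightarrow{\theta}0_F$.

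Now the point is that this holds for every $\theta\in\Theta$ simultaneously with the same co-ideal $G$, so by the very definition of the supremum of convergences we conclude $G\xrightarrow{\bigvee\Theta}0_F$. Applying Corollary \ref{coid} in the reverse direction gives $0_F\in\overline{G}^{1}_{\bigvee\Theta}\subset\overline{(f_\alpha)_{\alpha\in A}^{\nearrow,\eta}}^{1}_{\bigvee\Theta}$ (by monotonicity of adherence), and a final appeal to Proposition \ref{prod}(iv) yields $f_\alpha\xrightarrow{\eta\bigvee\Theta}0_F$, finishing the proof. No step is really an obstacle; the only subtlety is the observation that the same co-ideal $G$ works for all $\theta$ at once, which is what allows the supremum to ``factor through'' the adherence in the presence of the co-ideal characterization.
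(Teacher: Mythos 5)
Your proof is correct and follows essentially the same route as the paper, which derives the proposition in one sentence from condition (iv) of Proposition \ref{prod} together with the claim that the adherence with respect to $\bigvee\Theta$ equals the intersection of the $\theta$-adherences. Your use of Corollary \ref{coid} to justify that claim for the co-ideal $\left(f_{\alpha}\right)_{\alpha\in A}^{\nearrow,\eta}$ (where the co-ideal structure is genuinely needed, since for arbitrary sets only one inclusion holds) is exactly the intended argument, just spelled out in more detail than the paper bothers to.
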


\begin{example}[Essentially \cite{fremlin}]\label{ru2}
We will show that if in $F$ every countable disjoint set is order bounded, then $\mathrm{ru}^{2}=\sigma\mathrm{o}$. On one hand, $\mathrm{ru}\ge\sigma\mathrm{o}$, hence $\mathrm{ru}^{2}\ge\sigma\mathrm{o}^{2}=\sigma\mathrm{o}$ (the last equality will be proven in Example \ref{sigmao}). On the other hand, assume that $0_{F}\le f_{\alpha}\xrightarrow[]{\sigma\mathrm{o}} 0_{F}$, so that there is a decreasing sequence $\left(e_{n}\right)_{n\in\N}\subset F$ with $\bigwedge\limits_{n\in\N}e_{n}=0_{F}$ and such that for every $n\in\N$ there is $\alpha_{n}$ with $f_{\alpha}\le e_{n}$, as soon as $\alpha\ge\alpha_{n}$. By \cite[Lemma 7.7]{ab0} for every $m\in\N$ there is $h_{m}\in F_{+}$ such that $e_{n}\le 2^{-n}h_{m}+\frac{1}{m}e_{1}$, for every $n\in\N$. Clearly, $\frac{1}{m}e_{1}\xrightarrow[m]{\mathrm{ru}} 0_{F}$; we claim that it is a controlling net for $f_{\alpha}\xrightarrow[]{\mathrm{ru}^{2}} 0_{F}$. Indeed, if $\alpha\ge\alpha_{n}$ we have that $\left(f_{\alpha}-\frac{1}{m}e_{1}\right)^{+}\le \left(e_{n}-\frac{1}{m}e_{1}\right)^{+}\le 2^{-n}h_{m}$, and so $\left(f_{\alpha}-\frac{1}{m}e_{1}\right)^{+}\xrightarrow[]{\mathrm{ru}} 0_{F}$, for every $m\in\N$.
\qed\end{example}

There can be various convergences which share decreasing null nets. In fact, we have the following general result.

\begin{theorem}\label{intd}
Let $\eta_{0}$ be the discrete convergence on $F$ and let $\theta$ be a locally solid additive convergence on $F$. Then:
\item[(i)] A net $\left(f_{\alpha}\right)_{\alpha\in A}\subset F_{+}$ is $\eta_{0}\theta$-null iff there is a (decreasing) $\theta$-null net $\left(g_{\beta}\right)_{\beta\in B}\subset F_{+}$ such that for every $\beta$ there is $\alpha_{0}$ such that $f_{\alpha}\le g_{\beta}$, for all $\alpha\ge\alpha_{0}$.
\item[(ii)] $\eta_{0}\theta$ and $\theta$ have the same decreasing null nets. In fact, $\eta_{0}\theta$ is the strongest locally solid additive convergence on $F$ in which every decreasing $\theta$-null net is null.
\item[(iii)] If $\theta$ is linear and completely metrizable, then $\eta_{0}\theta=\mathrm{ru}$.
\end{theorem}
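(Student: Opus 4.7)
For (i) I unwind the definition from Proposition \ref{prod}: a net $(f_\alpha) \subset F_+$ is $\eta_0\theta$-null iff there exists a $\theta$-null $(g_\beta) \subset F_+$---which may be chosen decreasing via the co-ideal characterization in Proposition \ref{prod}---such that $(f_\alpha - g_\beta)^+ \xrightarrow{\eta_0} 0_F$ for every $\beta$. Because $\eta_0$ is the discrete convergence, this amounts to $(f_\alpha - g_\beta)^+$ being eventually $0_F$, i.e., eventually $f_\alpha \le g_\beta$.

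For the first half of (ii), the implication ``decreasing $\theta$-null $\Rightarrow \eta_0\theta$-null'' follows from (i) by using the net as its own controlling net (if $(f_\alpha)$ is decreasing then $f_\alpha \le f_\beta$ whenever $\alpha \ge \beta$). Conversely, if $(f_\alpha) \subset F_+$ is decreasing and $\eta_0\theta$-null, apply (i) to obtain $(g_\beta) \xrightarrow{\theta} 0_F$ with eventual domination; for each $\beta$ pick $\alpha(\beta)$ with $f_{\alpha(\beta)} \le g_\beta$, so that $(f_{\alpha(\beta)})_\beta \xrightarrow{\theta} 0_F$ by local solidity, giving a $\theta$-null subset of $(f_\alpha)$ to which Lemma \ref{mono} applies.

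The maximality assertion in (ii) is the main obstacle. Let $\mu$ be a locally solid additive convergence under which every decreasing $\theta$-null net is null; I want $\mu \le \eta_0\theta$. Given $(f_\alpha) \xrightarrow{\eta_0\theta} 0_F$ in $F_+$, I pick via (i) a decreasing $\theta$-null controlling net $(g_\beta)$, which is $\mu$-null by hypothesis. On the product $A \times B$ I define $h_{(\alpha,\beta)} := f_\alpha \wedge g_\beta$; the lift $(g_\beta)_{(\alpha,\beta)}$ has the same tail filter as $(g_\beta)_\beta$, hence is $\mu$-null, and local solidity yields $h \xrightarrow{\mu} 0_F$. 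The crucial observation is that for any $(\alpha_0, \beta_0)$ and $\alpha' \ge \max(\alpha_0, \alpha_0(\beta_0))$ (with $\alpha_0(\beta_0)$ witnessing eventual domination at $\beta_0$) one has $f_{\alpha'} \le g_{\beta_0}$ and so $h_{(\alpha', \beta_0)} = f_{\alpha'}$; therefore every tail of $h$ contains a tail of $(f_\alpha)$, and the tail filter of $(f_\alpha)$ is finer than that of $h$. The convergence space axiom that finer filters converge to the same point then gives $(f_\alpha) \xrightarrow{\mu} 0_F$. I expect this to be the subtle step, since in general one cannot recover convergence of a net from that of a reindexed one: here the specific tail-filter compatibility produced by eventual domination is essential.

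For (iii), the inequality $\mathrm{ru} \ge \eta_0\theta$ is direct: if $(f_\alpha) \subset F_+$ is $\mathrm{ru}$-null, then $f_\alpha \le c_\alpha e$ eventually for some $e \in F_+$ and scalars $c_\alpha \to 0$, and $g_n := \frac{1}{n} e$ is a $\theta$-null controlling sequence by linearity of $\theta$. For the reverse direction I invoke the maximality of (ii) and reduce to showing that every decreasing $\theta$-null net in $F_+$ is $\mathrm{ru}$-null. Metrizability of $\theta$ provides a decreasing $\theta$-null sequence $(g_n)$ inside such a net, and completeness lets me pass to a subsequence $(g_{n_k})$ for which $e := \sum_k k\, g_{n_k}$ converges in $\theta$ to an element of $F_+$; then $g_{n_k} \le \frac{1}{k} e$ is $\mathrm{ru}$-null, and Lemma \ref{mono} propagates $\mathrm{ru}$-nullity back to the original decreasing net.
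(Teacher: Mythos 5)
Your proof is correct. Parts (i) and (iii) run essentially along the paper's lines: (i) is just the definition read against the discrete convergence (with the co-ideal form of Proposition \ref{prod} supplying a decreasing controlling net), and for (iii) you use the same two ingredients as the paper, namely that $\mathrm{ru}$ dominates every linear locally solid convergence (your hands-on verification that $g_{n}=\frac{1}{n}e$ controls an $\mathrm{ru}$-null net is an equivalent substitute) and the Riesz pseudo-norm/completeness construction of a dominating element $e=\sum_{k}k\,g_{n_{k}}$ followed by Lemma \ref{mono}. The genuine divergence is the maximality assertion in (ii): the paper dispatches it by citing \cite[Lemma 3.7]{ectv}, which identifies the strongest locally solid convergence making a prescribed family of decreasing nets null, whereas you reprove that fact in situ. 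Your device --- forming $h_{(\alpha,\beta)}=f_{\alpha}\wedge g_{\beta}$ on $A\times B$, squeezing it below the $\mu$-null lift of the controlling net by local solidity, and then noting that eventual domination forces every tail of $h$ to contain a tail of $\left(f_{\alpha}\right)_{\alpha\in A}$, so that $\left(f_{\alpha}\right)_{\alpha\in A}$ is a quasi-subnet of $h$ and inherits $\mu$-convergence --- is sound; it rests on the quasi-subnet axiom (first bullet of Theorem \ref{locs}) and on the fact that convergence of a net depends only on its tail filter, both of which hold in this framework (and ``$\max$'' should of course be ``a common upper bound'' in the merely directed set $A$). The payoff of your route is a self-contained argument that also yields $\theta\le\eta_{0}\theta$ and the converse half of the first claim of (ii) for free; the cost is a paragraph of filter bookkeeping that the paper outsources to the cited lemma.
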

\begin{proof}
(i) follows from the definition. The second claim in (ii) follows from (i) and \cite[Lemma 3.7]{ectv}; in particular $\theta\le\eta_{0}\theta$. On the other hand, according to part (ii) of Proposition \ref{produc} every decreasing $\theta$-null net is $\eta_{0}\theta$-null. This proves the first claim.\medskip

(iii): Since $\mathrm{ru}$ is the strongest linear locally solid convergence on $F$ (see \cite[Proposition 5.1]{ectv}), we only need to show that every decreasing $\theta$-null net is $\mathrm{ru}$-null. It is well-known that if $\theta$ is metrizable, it is metrizable by a Riesz pseudo-norm. Let $\rho$ be such and assume that a decreasing net $\left(f_{\alpha}\right)_{\alpha\in A}\subset F_{+}$ is $\theta$-null. There are $\left(\alpha_{n}\right)_{n\in\N}\subset A$ such that $\rho\left(f_{\alpha_{n}}\right)\le\frac{1}{n2^{n}}$, so that $\rho\left(nf_{\alpha_{n}}\right)\le\frac{1}{2^{n}}$, for every $n\in\N$. Let $e:=\sum\limits_{n\in\N}nf_{n}$, which exists due to completeness of $\rho$. It follows that $f_{\alpha_{n}}\le\frac{1}{n}e$, hence $f_{\alpha_{n}}\xrightarrow[]{\mathrm{ru}} 0_{F}$. We can now use Lemma \ref{mono} and conclude that $f_{\alpha}\xrightarrow[]{\mathrm{ru}} 0_{F}$.
\end{proof}

\begin{example}
Combining Example \ref{aw} with part (iii) of Theorem \ref{intd} yields that on a Banach lattice $\eta_{0}\mathrm{aw}=\mathrm{ru}$.
\qed\end{example}

\begin{remark}\label{vertical}
It is not hard to show that $\eta_{0}\left(\eta_{0}\theta\right)=\eta_{0}\theta$, for any locally solid additive convergence $\theta$ on $F$. Hence, for $\theta$ we have that $\eta_{0}\theta=\theta$ iff $\theta=\eta_{0}\zeta$, for some locally solid additive convergence $\zeta$ on $F$. We will call convergences with this property \emph{vertical}; they will be investigated elsewhere. Order, $\sigma$-order and relative uniform convergences are vertical. Note that for vertical $\theta$ we have that $\theta\ge\eta\theta$, for any $\eta$.
\qed\end{remark}

\begin{example}
Unsurprisingly, composition is not commutative. Let $\eta$ be the norm convergence on a Banach lattice without a strong unit, and let $\theta$ be relative uniform convergence, which is strictly stronger than $\eta$. Since they have the same decreasing null nets, it follows that $\eta\theta=\eta\eta=\eta$ and $\theta\eta=\theta\theta=\theta$ (see \cite[Examples 2.5 and 2.6]{erz}).
\qed\end{example}

\subsection{Associativity of composition}

We will now study the operation of adherence with respect to the composition of convergences.

\begin{theorem}\label{product}
Let $\eta,\theta$ be locally solid additive convergences on $F$.
\item[(i)] If $H\subset F_{+}$ is positively solid, then $\overline{H}^{1}_{\eta\theta}\subset\overline{\overline{H}^{1}_{\eta}}^{1}_{\theta}$ (adherences in $F_{+}$). If $H$ is a lattice-ideal, then $\overline{H}^{1}_{\eta\theta}=\overline{\overline{H}^{1}_{\eta}}^{1}_{\theta}$.
\item[(ii)] If $H\subset F$ is solid, then $\overline{H}^{1}_{\eta\theta}\subset\overline{\overline{H}^{1}_{\eta}}^{1}_{\theta}$. If $H$ is an ideal, then $\overline{H}^{1}_{\eta\theta}=\overline{\overline{H}^{1}_{\eta}}^{1}_{\theta}$.
\end{theorem}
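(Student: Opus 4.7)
The plan is to prove part (i) first and deduce part (ii) from it by restricting to the positive cone. The main tools will be Lemma~\ref{psola} / Corollary~\ref{psola1} together with the co-ideal description of $\eta\theta$-convergence in Proposition~\ref{prod}(iii).

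\textbf{Inclusion in (i).} Let $g\in\overline{H}^{1}_{\eta\theta}\subset F_{+}$. By Lemma~\ref{psola} pick $(h_{\alpha})\subset H$ with $(g-h_{\alpha})^{+}\xrightarrow{\eta\theta}0_{F}$, and by Proposition~\ref{prod}(iii) a $\theta$-null co-ideal $G\subset F_{+}$ such that $((g-h_{\alpha})^{+}-u)^{+}\xrightarrow[\alpha]{\eta}0_{F}$ for every $u\in G$. Using $u,h_{\alpha}\ge 0_{F}$, this rearranges to $((g-u)^{+}-h_{\alpha})^{+}\xrightarrow[\alpha]{\eta}0_{F}$, so Lemma~\ref{psola} gives $(g-u)^{+}\in\overline{H}^{1}_{\eta}$ for every $u\in G$. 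Since $|(g-u)^{+}-g|\le u\xrightarrow[u\in G]{\theta}0_{F}$, local solidity of $\theta$ yields $(g-u)^{+}\xrightarrow[u\in G]{\theta}g$, placing $g$ in $\overline{\overline{H}^{1}_{\eta}}^{1}_{\theta}$.

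\textbf{Equality in (i), the main step.} First I would verify that when $H$ is a lattice-ideal, so is $\overline{H}^{1}_{\eta}$: positive solidity follows from $a\wedge h_{\gamma}\to a$ whenever $a\le b=\lim h_{\gamma}$, and $\vee$-closure from joint continuity of $\vee$ applied to $(h_{\alpha}\vee k_{\beta})$. For $g\in\overline{\overline{H}^{1}_{\eta}}^{1}_{\theta}\subset F_{+}$, Corollary~\ref{psola1} applied to $\overline{H}^{1}_{\eta}$ produces the net $(f_{\beta})$ indexed by $\overline{H}^{1}_{\eta}\cap[0_{F},g]$ with $f_{\beta}\xrightarrow{\theta}g$. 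The plan is to verify that $H\cap[0_{F},g]\xrightarrow{\eta\theta}g$ with controlling net $(g-f_{\beta})_{\beta}$. For $h\in H\cap[0_{F},g]$, since $h\le g$, one computes $((g-h)^{+}-(g-f_{\beta}))^{+}=(f_{\beta}-h)^{+}=f_{\beta}-h\wedge f_{\beta}$; for fixed $\beta$, the net $(h\wedge f_{\beta})_{h}$ contains every $k\in H\cap[0_{F},f_{\beta}]$ (take $h=k$, noting $k\in H\cap[0_{F},g]$), so by Lemma~\ref{mono} applied to the decreasing net $(f_{\beta}-h\wedge f_{\beta})_{h}$ together with Corollary~\ref{psola1} for $H$ at $f_{\beta}$, we obtain $f_{\beta}-h\wedge f_{\beta}\xrightarrow[h]{\eta}0_{F}$. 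A final appeal to Corollary~\ref{psola1} then gives $g\in\overline{H}^{1}_{\eta\theta}$.

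\textbf{Reduction of (ii).} Since $H$ is solid (respectively, an ideal) in $F$, the set $H_{+}=H\cap F_{+}$ is positively solid (respectively, a lattice-ideal of $F_{+}$). All adherences in sight are solid, and the identity $\overline{A}^{1}\cap F_{+}=\overline{A_{+}}^{1}$ (for solid $A$, obtained by passing to absolute values) holds for both $\eta$ and $\eta\theta$, as well as for $\overline{H}^{1}_{\eta}$. Hence $g$ lies in $\overline{H}^{1}_{\eta\theta}$, respectively $\overline{\overline{H}^{1}_{\eta}}^{1}_{\theta}$, iff $|g|$ lies in $\overline{H_{+}}^{1}_{\eta\theta}$, respectively $\overline{\overline{H_{+}}^{1}_{\eta}}^{1}_{\theta}$, and part (ii) reduces directly to (i).

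\textbf{Expected main obstacle.} The delicate point is the equality in (i): recognising that the correct controlling net $(g-f_{\beta})_{\beta}$ must be drawn from the intermediate adherence $\overline{H}^{1}_{\eta}$ rather than from $H$ itself, and then securing the residual $\eta$-convergence through the cofinal interaction between $H\cap[0_{F},g]$ and $H\cap[0_{F},f_{\beta}]$ under meet with $f_{\beta}$. Everything else is routine manipulation of lattice identities.
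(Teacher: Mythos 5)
Your proposal is correct and follows essentially the same route as the paper: the inclusion via Lemma~\ref{psola} and the co-ideal description of $\eta\theta$, the equality via the controlling net $(g-f_{\beta})_{\beta}$ drawn from $\overline{H}^{1}_{\eta}\cap[0_{F},g]$ combined with Lemma~\ref{mono} applied to the decreasing net indexed by $H\cap[0_{F},g]$, and the reduction of (ii) to (i) through absolute values and solidity of the adherences. The only (harmless) difference is that you verify $\overline{H}^{1}_{\eta}$ is a lattice-ideal so as to invoke Corollary~\ref{psola1}, where the paper gets by with positive solidity and Lemma~\ref{psola}.
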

\begin{proof}
(i): Let $f\in \overline{H}^{1}_{\eta\theta}$. By Lemma \ref{psola} there is a net $\left(f_{\alpha}\right)_{\alpha\in A}\subset H\cap\left[0_{F},f\right]$ which $\eta\theta$-converges to $f$. Then, there is a $\theta$-null co-ideal $G\subset F_{+}$ such that  $\left(\left(f-g\right)^{+}-f_{\alpha}\right)^{+}=\left(f-f_{\alpha}-g\right)^{+}\xrightarrow[]{\eta} 0_{F}$, for every $g\in G$. According to Lemma \ref{psola} $\left(f-g\right)^{+}\in \overline{H}^{1}_{\eta}$, for every $g\in G$, and as $G$ is $\theta$-null, we conclude that $f\in\overline{\overline{H}^{1}_{\eta}}^{1}_{\theta}$.\medskip

We now assume that $H$ is a lattice ideal and $f\in\overline{\overline{H}^{1}_{\eta}}^{1}_{\theta}$, so that there is  $\left(f_{\beta}\right)_{\beta\in B}\subset \left[0_{F},f\right]\cap \overline{H}^{1}_{\eta}$ which $\theta$-converges to $f$. Then, $g_{\beta}\xrightarrow[]{\theta} 0_{F}$, where $g_{\beta}=f-f_{\beta}$, for every $\beta$. For every $\beta$ there is $\left(e_{\gamma}^{\beta}\right)_{\gamma\in \Gamma_{\beta}}\subset \left[0_{F},f_{\beta}\right]\cap H\subset \left[0_{F},f\right]\cap H$ which $\eta$-converge to $f_{\beta}=f-g_{\beta}$. Hence, the net $\left(\left(f-u-g_{\beta}\right)^{+}\right)_{u\in \left[0_{F},f\right]\cap H}$ is positive, decreasing and contains $\left(\left(f_{\beta}-e_{\gamma}^{\beta}\right)^{+}\right)_{\gamma\in\Gamma_{\beta}}$, which is $\eta$-null, therefore is itself $\eta$-null, according to Lemma \ref{mono}. Thus, $f-u\xrightarrow[{u\in \left[0_{F},f\right]\cap H}]{\eta\theta}0_{F}$, from where $f\in \overline{H}^{1}_{\eta\theta}$.\medskip

(ii): Let $f\in \overline{H}^{1}_{\eta\theta}$. It is easy to see that then $\left|f\right|\in \overline{H_{+}}^{1}_{\eta\theta}\subset \overline{\overline{H_{+}}^{1}_{\eta}}^{1}_{\theta}\subset \overline{\overline{H}^{1}_{\eta}}^{1}_{\theta}$. As the latter set is solid, it follows that $f\in \overline{\overline{H}^{1}_{\eta}}^{1}_{\theta}$.\medskip

Assume that $H$ is an ideal and $f\in \overline{\overline{H}^{1}_{\eta}}^{1}_{\theta}$. One can show that then $\left|f\right|\in \overline{\overline{H_{+}}^{1}_{\eta}}^{1}_{\theta}=\overline{H_{+}}^{1}_{\eta\theta}\subset \overline{H}^{1}_{\eta\theta}$. Since the latter set is an ideal, we conclude that $f\in\overline{H}^{1}_{\eta\theta}$.
\end{proof}

Analogously, using Corollary \ref{coid} one can get the following result.

\begin{proposition}\label{coide}
If $G\subset F_{+}$ is a co-ideal, then $\overline{G}^{1}_{\eta\theta}=\overline{\overline{G}^{1}_{\eta}}^{1}_{\theta}$, and in particular $G\xrightarrow[]{\eta\theta}0_{F}$ iff $0_{F}\in\overline{\overline{G}^{1}_{\eta}}^{1}_{\theta}$.
\end{proposition}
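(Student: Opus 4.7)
The plan is to apply Proposition \ref{aeub}(ii) directly to $G$, viewed as a decreasing net indexed by $-G$. The bridge between that proposition, which concerns $\eta\theta$-almost eventual upper bounds of a net, and the desired statement about $\eta\theta$-adherences of a co-ideal, is the identification
\[
G^{\nearrow,\xi}=\overline{G}^{1}_{\xi}
\]
for every locally solid additive convergence $\xi$ on $F$. Once this is established, Proposition \ref{aeub}(ii) collapses in one step to $\overline{G}^{1}_{\eta\theta}=G^{\nearrow,\eta\theta}=\overline{G^{\nearrow,\eta}}^{1}_{\theta}=\overline{\overline{G}^{1}_{\eta}}^{1}_{\theta}$, and the ``in particular'' part then follows from Corollary \ref{coid} applied at the two ends of this chain.

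To verify the identification, the $(\subset)$ direction is easy: if $h\in G^{\nearrow,\xi}$, rewriting $(g-h)^{+}=g\vee h-h$ shows that $g\vee h\xrightarrow[g]{\xi}h$, and the upper-set property of $G\subset F_{+}$ gives $g\vee h\in G$ (using $g\vee h\geq g\in G$ together with $g\vee h\in F_{+}$), hence $h\in\overline{G}^{1}_{\xi}$. For the $(\supset)$ direction I would start with $(g_{\alpha})\subset G$ with $g_{\alpha}\xrightarrow[]{\xi}h$, replace it with $g_{\alpha}\vee h\in G$ (still $\xi$-convergent to $h$ by continuity of $\vee$), and observe that $(g_{\alpha}\vee h-h)_{\alpha}$ is a $\xi$-null positive net contained as a subset in the decreasing positive net $(g\vee h-h)_{g\in G}$. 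Lemma \ref{mono} then upgrades this to $\xi$-convergence of the whole decreasing net, which is exactly $h\in G^{\nearrow,\xi}$.

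The main obstacle is this appeal to Lemma \ref{mono}; this is where the co-ideal structure of $G$ is genuinely required, and it is the natural counterpart of the use of Lemma \ref{psola}/Corollary \ref{psola1} in the lattice-ideal half of Theorem \ref{product}. Note that the $h=0_{F}$ case of the identification is precisely Corollary \ref{coid}, so the whole argument can be viewed as promoting Corollary \ref{coid} from the point $0_{F}$ to an arbitrary $h\in F$, which is how the hint about Corollary \ref{coid} enters.
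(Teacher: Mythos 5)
Your argument is correct, but it is packaged differently from the paper's. The paper's (one-line) proof dualizes the direct two-inclusion argument of Theorem \ref{product}, with Corollary \ref{coid} playing the role of Corollary \ref{psola1}; you instead reduce the whole proposition to Proposition \ref{aeub}(ii) via the identification $G^{\nearrow,\xi}=\overline{G}^{1}_{\xi}$ for a co-ideal $G$ viewed as a decreasing net. Both halves of that identification check out: the inclusion $\subset$ from $\left(g-h\right)^{+}=g\vee h-h$ together with the upper-set property of $G$ (note $g\vee h\ge g\ge 0_{F}$, so $g\vee h\in G$ regardless of the sign of $h$), and the inclusion $\supset$ from continuity of $\vee$ plus Lemma \ref{mono} applied to the net $\left(g\vee h-h\right)_{g\in G}$ — which is indeed a decreasing net in $F_{+}$ with downward-directed range because $G$ is $\wedge$-closed, a point worth stating explicitly since Lemma \ref{mono} is phrased for such nets. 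Your remark that the $h=0_{F}$ case of the identification is exactly Corollary \ref{coid} is accurate, so your proof genuinely uses the same ingredient the paper points to, just promoted to arbitrary $h$. What your route buys is conceptual: it shows Proposition \ref{coide} is Proposition \ref{aeub}(ii) in disguise for co-ideals, which explains why the two results chain so cleanly in the associativity proof (Corollary \ref{ass}); what the paper's route buys is symmetry with the lattice-ideal computation in Theorem \ref{product}. Both ultimately lean on Lemma \ref{mono} for the nontrivial inclusion, so neither is more elementary than the other.
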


\begin{corollary}\label{ass}
Composition is associative.
\end{corollary}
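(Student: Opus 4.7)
The plan is to reduce the identity $(\eta\theta)\zeta=\eta(\theta\zeta)$ to a chain of equivalences among positive null nets, using the two characterizations of $\eta\theta$-convergence that have just been established, namely Proposition \ref{aeub}(ii) together with Proposition \ref{prod}(iv), and the co-ideal form of Theorem \ref{product} recorded as Proposition \ref{coide}. Since all the convergences under discussion are locally solid additive, each is determined by its null positive nets, so it suffices to check that a positive net $(f_{\alpha})_{\alpha\in A}\subset F_{+}$ is $(\eta\theta)\zeta$-null iff it is $\eta(\theta\zeta)$-null.

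First I would apply part (iv) of Proposition \ref{prod} to $(\eta\theta)\zeta$ to obtain
\[
f_{\alpha}\xrightarrow[]{(\eta\theta)\zeta}0_{F}\iff 0_{F}\in\overline{\left(f_{\alpha}\right)_{\alpha\in A}^{\nearrow,\eta\theta}}^{1}_{\zeta}.
\]
Next, part (ii) of Proposition \ref{aeub} rewrites the set of $\eta\theta$-almost eventual upper bounds as the $\theta$-adherence of the set of $\eta$-almost eventual upper bounds, so the right-hand side becomes $0_{F}\in\overline{\overline{G}^{1}_{\theta}}^{1}_{\zeta}$, where $G:=\left(f_{\alpha}\right)_{\alpha\in A}^{\nearrow,\eta}$. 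By part (i) of Proposition \ref{aeub} this $G$ is a co-ideal in $F$, and intersecting with $F_{+}$ gives a co-ideal in $F_{+}$ on which the $\eta$-adherence agrees with the original (so we may invoke Proposition \ref{coide}).

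Now Proposition \ref{coide} applied to this co-ideal yields $\overline{\overline{G}^{1}_{\theta}}^{1}_{\zeta}=\overline{G}^{1}_{\theta\zeta}$, and so $0_{F}\in\overline{G}^{1}_{\theta\zeta}$. A second appeal to Proposition \ref{prod}(iv), this time with the pair $(\eta,\theta\zeta)$, gives
\[
0_{F}\in\overline{\left(f_{\alpha}\right)_{\alpha\in A}^{\nearrow,\eta}}^{1}_{\theta\zeta}\iff f_{\alpha}\xrightarrow[]{\eta(\theta\zeta)}0_{F}.
\]
Concatenating the equivalences delivers $(\eta\theta)\zeta=\eta(\theta\zeta)$.

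There is no real obstacle here; the only point that requires a moment of care is the passage from $G\subset F$ being a co-ideal in $F$ to invoking Proposition \ref{coide}, which is stated for co-ideals in $F_{+}$. This is harmless because the $\nearrow,\eta$-operation preserves positivity for positive nets (part (iii) of Proposition \ref{aeub} is not even needed — one just intersects with $F_{+}$, which is both $\theta$- and $\zeta$-closed for adherence purposes by the positively-solid version of Theorem \ref{product}(i)), so the adherences are unaffected. Everything else is a formal chain.
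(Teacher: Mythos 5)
Your proof is correct and follows essentially the same route as the paper: both establish the chain $\left(f_{\alpha}\right)_{\alpha\in A}^{\nearrow,\eta\left(\theta\zeta\right)}=\overline{\left(f_{\alpha}\right)_{\alpha\in A}^{\nearrow,\eta}}^{1}_{\theta\zeta}=\overline{\overline{\left(f_{\alpha}\right)_{\alpha\in A}^{\nearrow,\eta}}^{1}_{\theta}}^{1}_{\zeta}=\left(f_{\alpha}\right)_{\alpha\in A}^{\nearrow,\left(\eta\theta\right)\zeta}$ via Proposition \ref{aeub}(ii) and Proposition \ref{coide}, and then test membership of $0_{F}$. Your remark about passing from a co-ideal in $F$ to one in $F_{+}$ is a point the paper silently elides, and your fix is fine.
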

\begin{proof}
Let $\eta,\theta,\zeta$ be locally solid additive convergences on $F$. For a net $\left(f_{\alpha}\right)_{\alpha\in A}\subset F_{+}$ we have $$\left(f_{\alpha}\right)_{\alpha\in A}^{\nearrow,\eta\left(\theta\zeta\right)}=\overline{\left(f_{\alpha}\right)_{\alpha\in A}^{\nearrow,\eta}}^{1}_{\theta\zeta}=\overline{\overline{\left(f_{\alpha}\right)_{\alpha\in A}^{\nearrow,\eta}}^{1}_{\theta}}^{1}_{\zeta}=\overline{\left(f_{\alpha}\right)_{\alpha\in A}^{\nearrow,\eta\theta}}^{1}_{\zeta}=\left(f_{\alpha}\right)_{\alpha\in A}^{\nearrow,\left(\eta\theta\right)\zeta},$$
where the second equality follows from Proposition \ref{coide}, and all the rest -- from part (ii) of Proposition \ref{aeub}. Hence, $f_{\alpha}\xrightarrow[]{\eta\left(\theta\zeta\right)}0_{F}$ $\Leftrightarrow$ $0_{F}\in \left(f_{\alpha}\right)_{\alpha\in A}^{\nearrow,\eta\left(\theta\zeta\right)}=\left(f_{\alpha}\right)_{\alpha\in A}^{\nearrow,\left(\eta\theta\right)\zeta}$ $\Leftrightarrow$ $f_{\alpha}\xrightarrow[]{\left(\eta\theta\right)\zeta}0_{F}$.
\end{proof}

Let us describe how to take the composition of infinitely many convergences. The proof is a standard transfinite induction argument and relies on the fact that an infimum of a directed set of Hausdorff convergences is Hausdorff.

\begin{proposition}\label{infiniteco}
Let $n_{0}$ be an ordinal, and assume that for every successor ordinal $n\le n_{0}$ there is a locally solid additive convergence $\eta_{n}$ on $F$. Define $\prod\limits_{k=1}^{n}\eta_{k}$ inductively by $\prod\limits_{k=1}^{n}\eta_{k}:=\left(\prod\limits_{k=1}^{n-1}\eta_{k}\right)\eta_{n}$, if $n>1$ is a successor ordinal and $\prod\limits_{k=1}^{n}\eta_{k}:=\bigwedge\limits_{m<n}\prod\limits_{k=1}^{m}\eta_{k}$ otherwise. Then, $\prod\limits_{k=1}^{n_{0}}\eta_{k}$ is a locally solid additive convergence. It is linear if at least one of $\eta_{n}$ is and Hausdorff iff all $\eta_{n}$ are. Moreover, $\prod\limits_{k=1}^{n}\eta_{k}=\prod\limits_{k=1}^{m}\eta_{k} \prod\limits_{k=m+1}^{n}\eta_{k}$, for any $m<n$.
\end{proposition}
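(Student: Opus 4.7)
The plan is a transfinite induction on $n \le n_0$, carrying along five inductive hypotheses in tandem: (a) $\prod_{k=1}^{n}\eta_k$ is a well-defined locally solid additive convergence; (b) the partial products form a decreasing chain, i.e. $\prod_{k=1}^{n}\eta_k \le \prod_{k=1}^{m}\eta_k$ whenever $m<n$; (c) if some $\eta_k$ with $k\le n$ is linear, so is $\prod_{k=1}^{n}\eta_k$; (d) $\prod_{k=1}^{n}\eta_k$ is Hausdorff iff every $\eta_k$ entering the product is Hausdorff; and (e) the associativity $\prod_{k=1}^{n}\eta_k = \prod_{k=1}^{m}\eta_k \cdot \prod_{k=m+1}^{n}\eta_k$ holds for every $m<n$, where the right-hand factor is defined by the analogous recursion starting at index $m+1$.

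At a successor step $n>1$, all five items reduce to the binary case: (a), (c), (d) follow from Proposition \ref{produc} applied to $\prod_{k=1}^{n-1}\eta_k$ (locally solid additive by the inductive hypothesis) and $\eta_n$; (b) follows from part (i) of the same proposition, since composition on the right weakens; and (e) follows by combining the inductive hypothesis for $n-1$ with Corollary \ref{ass}. At a limit step $n$, hypothesis (b) guarantees that $\{\prod_{k=1}^{m}\eta_k : m<n\}$ is a downward directed (in fact decreasing) family, so its infimum is locally solid additive, with a net null in the infimum iff it is null in some $\prod_{k=1}^{m}\eta_k$ with $m<n$, as described in the preliminaries. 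This gives (a), (b), (c) at once. For (d), the forward direction combines the inductive hypothesis with the remark (noted in the preliminaries) that the infimum of a decreasing family of Hausdorff convergences is Hausdorff; the backward direction uses $\prod_{k=1}^{n}\eta_k \le \prod_{k=1}^{m}\eta_k$, so Hausdorffness of the left side forces Hausdorffness of each $\prod_{k=1}^{m}\eta_k$, and hence (by inductive (d)) of each relevant $\eta_k$.

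The main obstacle is verifying (e) at a limit ordinal. It reduces to a right-distributivity statement that I would establish as a side lemma: for any locally solid additive $\eta$ and any downward directed family $\Theta$ of such convergences, $\eta\bigwedge\Theta = \bigwedge_{\theta\in\Theta}\eta\theta$. The inequality $\le$ is Corollary \ref{mon}(i); conversely, if $(f_\alpha)$ is $\eta\theta_0$-null for some $\theta_0\in\Theta$, any $\theta_0$-null controlling net is automatically $\bigwedge\Theta$-null (since $\bigwedge\Theta\le\theta_0$), witnessing $\eta(\bigwedge\Theta)$-nullity of $(f_\alpha)$. Granted this, for fixed $m<n$ the inductive hypothesis (e) yields $\prod_{k=1}^{j}\eta_k = \prod_{k=1}^{m}\eta_k \cdot \prod_{k=m+1}^{j}\eta_k$ for every $m<j<n$; since the partial products decrease and $\{j : m<j<n\}$ is cofinal in $\{j : j<n\}$, the infimum defining $\prod_{k=1}^{n}\eta_k$ is unchanged by restricting to this cofinal set, and the distributivity lemma lets us pull the constant factor $\prod_{k=1}^{m}\eta_k$ outside the infimum, yielding $\prod_{k=1}^{m}\eta_k \cdot \prod_{k=m+1}^{n}\eta_k$ and completing the induction.
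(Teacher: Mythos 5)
Your proposal is correct and follows exactly the route the paper indicates (the paper only remarks that the proof is ``a standard transfinite induction argument'' relying on the fact that an infimum of a downward directed family of Hausdorff convergences is Hausdorff). The one ingredient you supply that the paper leaves implicit, the right-distributivity $\eta\bigwedge\Theta=\bigwedge_{\theta\in\Theta}\eta\theta$ for downward directed $\Theta$, is proved correctly via Corollary \ref{mon}(i) and the controlling-net characterization, and it is precisely what is needed to push the ``Moreover'' identity through limit ordinals.
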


\subsection{Composition vs. operations with convergences}

In this subsection we obtain a series of similar results about the way various operations with convergences ``commute'' with composition. We start with the extension a la Example \ref{propadd}.

\begin{proposition}\label{propa}
If $E\subset F$ is an ideal, and $\eta,\theta$ are locally solid additive convergences on $E$, then $\eta_{F}\theta_{F}=\left(\eta\theta\right)_{F}$.
\end{proposition}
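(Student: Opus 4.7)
The plan is to reduce to the equality of positive null nets via Theorem \ref{locs} (both sides are locally solid additive convergences on $F$, by Proposition \ref{produc}(i) and the passage following Example \ref{propadd}), and then to verify the two inclusions directly by unpacking the definitions. Throughout I will use that for a positive net $(h_\alpha)\subset F_+$, being $\zeta_F$-null means that some tail lies in $E_+$ and, viewed as a net in $E$, is $\zeta$-null.

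For the inclusion $(\eta\theta)_F\le\eta_F\theta_F$, I would start with a positive $(\eta\theta)_F$-null net $(f_\alpha)$, pass to a tail in $E_+$ that is $\eta\theta$-null in $E$, and extract a $\theta$-null controlling net $(g_\beta)\subset E_+$ with $(f_\alpha-g_\beta)^+\xrightarrow[\alpha]{\eta}0_E$ for every $\beta$. Then $(g_\beta)$ is automatically $\theta_F$-null, and for each $\beta$ the net $(f_\alpha-g_\beta)^+$ eventually lies in $E_+$ (since $f_\alpha$ and $g_\beta$ eventually lie in $E_+$ and $E$ is a sublattice) and is $\eta$-null there, hence $\eta_F$-null. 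This exhibits $(g_\beta)$ as an $\eta_F\theta_F$-controlling net for $(f_\alpha)$.

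For the reverse inclusion, suppose $(f_\alpha)\subset F_+$ is $\eta_F\theta_F$-null with controlling $\theta_F$-null net $(g_\beta)\subset F_+$ satisfying $(f_\alpha-g_\beta)^+\xrightarrow[\alpha]{\eta_F}0_F$ for every $\beta$. After passing to a tail I may assume $(g_\beta)\subset E_+$ and that it is $\theta$-null in $E$. The main obstacle here is that a priori the $\eta_F$-condition only forces the positive parts $(f_\alpha-g_\beta)^+$ into $E$ eventually, not the $f_\alpha$ themselves. To overcome this, I would fix any single $\beta_0$ and find $\alpha_0$ with $(f_\alpha-g_{\beta_0})^+\in E$ for $\alpha\ge\alpha_0$; the decomposition $f_\alpha=f_\alpha\wedge g_{\beta_0}+(f_\alpha-g_{\beta_0})^+$ together with $f_\alpha\wedge g_{\beta_0}\le g_{\beta_0}\in E$ (the ideal property) then delivers $f_\alpha\in E_+$ for all $\alpha\ge\alpha_0$.

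Once this tail of $(f_\alpha)$ is known to lie in $E_+$, all the data transport to $E$: the controlling net $(g_\beta)\subset E_+$ is $\theta$-null in $E$, and for each $\beta$ the net $(f_\alpha-g_\beta)^+$, restricted to $\alpha\ge\alpha_0$, lies in $E_+$ and is $\eta$-null in $E$ (as a tail of an $\eta$-null net in $E$). Hence the tail $(f_\alpha)_{\alpha\ge\alpha_0}$ is $\eta\theta$-null in $E$, which by definition means $(f_\alpha)$ is $(\eta\theta)_F$-null. Combining the two inclusions with Theorem \ref{locs} yields $\eta_F\theta_F=(\eta\theta)_F$.
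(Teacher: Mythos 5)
Your proof is correct and follows essentially the same route as the paper: both directions unpack the definition of the extended convergence, and your decomposition $f_{\alpha}=f_{\alpha}\wedge g_{\beta_{0}}+\left(f_{\alpha}-g_{\beta_{0}}\right)^{+}$ is exactly the justification behind the paper's remark that $\left(f_{\alpha}\right)_{\alpha\in A}$ is eventually in $E$. The only difference is that you spell out both inclusions in full, whereas the paper dispatches the converse with ``proven similarly.''
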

\begin{proof}
If $f_{\alpha}\xrightarrow[]{\eta_{F}\theta_{F}} 0_{F}$ then there is a $\theta$-null net $\left(g_{\beta}\right)_{\beta\in B}\subset E_{+}$ such that $\left(f_{\alpha}-g_{\beta}\right)^{+}\xrightarrow[]{\eta} 0_{F}$, for every $\beta$; in particular $\left(f_{\alpha}\right)_{\alpha\in A}$ is eventually in $E$, and so it is $\left(\eta\theta\right)_{F}$-null. The converse implication is proven similarly.
\end{proof}

We now look at how composition interacts with the unbounded modification.

\begin{proposition}\label{unbo}
If $E\subset F$ is an ideal and $\eta,\theta$ are locally solid additive convergences on $F$, then $\mathrm{u}_{E}\eta \mathrm{u}_{E}\theta\ge \mathrm{u}_{E}\left(\eta\theta\right)$.
\end{proposition}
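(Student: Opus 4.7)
The plan is to unpack the definitions and reduce everything to a single lattice inequality that lets us ``intersect with $e$'' inside both the test net and the controlling net.

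First I would take a positive net $(f_\alpha)_{\alpha\in A}\subset F_+$ with $f_\alpha\xrightarrow{\mathrm{u}_E\eta\,\mathrm{u}_E\theta}0_F$ and, by definition of the composition, pick a controlling $\mathrm{u}_E\theta$-null net $(g_\beta)_{\beta\in B}\subset F_+$ such that $(f_\alpha-g_\beta)^{+}\xrightarrow{\mathrm{u}_E\eta}0_F$ for every $\beta$. To show $f_\alpha\xrightarrow{\mathrm{u}_E(\eta\theta)}0_F$ I must verify, for every fixed $e\in E_+$, that $f_\alpha\wedge e\xrightarrow{\eta\theta}0_F$, and the obvious candidate for a controlling $\theta$-null net is $(g_\beta\wedge e)_{\beta\in B}$, which is $\theta$-null because $g_\beta\xrightarrow{\mathrm{u}_E\theta}0_F$ and $e\in E_+$.

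The key step is the elementary inequality
$$(a\wedge c-b\wedge c)^{+}\le (a-b)^{+}\wedge c\qquad\text{for all }a,b,c\in F_+,$$
which is proved by a routine case analysis on whether $a,b$ lie above or below $c$ (or directly from Birkhoff's inequality plus the trivial bound by $c$). Applied with $a=f_\alpha$, $b=g_\beta$, $c=e$ it gives
$$\bigl(f_\alpha\wedge e-g_\beta\wedge e\bigr)^{+}\le (f_\alpha-g_\beta)^{+}\wedge e\xrightarrow[\alpha]{\eta}0_F$$
for each fixed $\beta$, since $(f_\alpha-g_\beta)^{+}\xrightarrow{\mathrm{u}_E\eta}0_F$ and $e\in E_+$. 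By the local-solidness of $\eta$ this forces $(f_\alpha\wedge e-g_\beta\wedge e)^{+}\xrightarrow[\alpha]{\eta}0_F$, so $(g_\beta\wedge e)$ is indeed a controlling net witnessing $f_\alpha\wedge e\xrightarrow{\eta\theta}0_F$.

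For an arbitrary net $(f_\alpha)_{\alpha\in A}\subset F$ with $f_\alpha\xrightarrow{\mathrm{u}_E\eta\,\mathrm{u}_E\theta}0_F$, I would pass to $|f_\alpha|$ (both sides of the desired inequality only look at $|f_\alpha|$ through Theorem \ref{locs}) and apply the argument above. The main — and only — obstacle is really the lattice inequality $(a\wedge c-b\wedge c)^{+}\le (a-b)^{+}\wedge c$; once that is in hand the rest is just reading off the definitions.
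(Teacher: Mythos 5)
Your proof is correct and takes essentially the same route as the paper: fix $e\in E_{+}$ and meet both the test net and the controlling net with $e$, observing that $\left(g_{\beta}\wedge e\right)_{\beta\in B}$ is $\theta$-null and still controls $\left(f_{\alpha}\wedge e\right)_{\alpha\in A}$. The only cosmetic difference is that the paper routes through the sandwich characterization $f_{\alpha}\le h_{\alpha\beta}\xrightarrow[\alpha]{\mathrm{u}_{E}\eta}g_{\beta}\xrightarrow[]{\mathrm{u}_{E}\theta}0_{F}$ of Proposition \ref{prod}, while you work straight from the definition via the (valid) inequality $\left(a\wedge c-b\wedge c\right)^{+}\le\left(a-b\right)^{+}\wedge c$; both are fine.
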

\begin{proof}
Assume that $\left(f_{\alpha}\right)_{\alpha\in A}\subset F_{+}$ is $\mathrm{u}_{E}\eta \mathrm{u}_{E}\theta$-null. Then, by Proposition \ref{produc} there are nets $\left(h_{\alpha\beta}\right)_{\alpha\in A,~\beta\in B}$ and $\left(g_{\beta}\right)_{\beta\in B}$ in $F_{+}$ such that $0_{F}\le f_{\alpha}\le h_{\alpha,\beta}\xrightarrow[\alpha]{\mathrm{u}_{E}\eta} g_{\beta}\xrightarrow[\beta]{\mathrm{u}_{E}\theta} 0_{F}$. For every $e\in E_{+}$ we have $f_{\alpha}\wedge e\le h_{\alpha,\beta}\wedge e\xrightarrow[\alpha]{\eta} g_{\beta}\wedge e\xrightarrow[\beta]{\theta} 0_{F}$. Using Proposition \ref{produc} again we obtain $e\wedge f_{\alpha}\xrightarrow[]{\eta\theta} 0_{F}$. As $e$ was arbitrary, we conclude that $f_{\alpha}\xrightarrow[]{\mathrm{u}_{E}\left(\eta\theta\right)} 0_{F}$.
\end{proof}

\begin{remark}
Recall that $\theta$ and $\mathrm{u}\theta$ have the same decreasing null nets. Hence, according to Corollary \ref{mon}, $\mathrm{u}\eta \mathrm{u}\theta=\left(\mathrm{u}\eta\right)\theta$. It is not clear whether $\mathrm{u}_{E}\eta \mathrm{u}_{E}\theta=\left(\mathrm{u}_{E}\eta\right)\theta$, for any ideal $E\subset F$.
\qed\end{remark}

Next, we consider what happens with composition under a pull-back. If $T:F\to E$ is a positive operator, and $\eta$ is a locally solid additive convergence on $E$, define the convergence $\eta_{T}$ on $F$ by $0_{F}\le f_{\alpha}\xrightarrow[]{\eta_{T}}0_{F}$ if $Tf_{\alpha}\xrightarrow[]{\eta}0_{E}$ (this determines an additive locally solid convergence on $F$ by Theorem \ref{locs}).

\begin{proposition}\label{pullb}
If $T:F\to E$ is a positive operator, and $\eta,\theta$ are locally solid additive convergences on $E$, then $\eta_{T}\theta_{T}\ge\left(\eta\theta\right)_{T}$. If $T$ is a surjective homomorphism, then $\eta_{T}\theta_{T}=\left(\eta\theta\right)_{T}$.
\end{proposition}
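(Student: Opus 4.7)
For the first inclusion $\eta_{T}\theta_{T}\ge(\eta\theta)_{T}$, assume $0_{F}\le f_{\alpha}\xrightarrow{\eta_{T}\theta_{T}}0_{F}$. By the definition of composition (applied in $F$), there is a $\theta_{T}$-null controlling net $(g_{\beta})_{\beta\in B}\subset F_{+}$ such that $(f_{\alpha}-g_{\beta})^{+}\xrightarrow[\alpha]{\eta_{T}}0_{F}$ for every $\beta$. Unfolding the definition of $\eta_{T}$ and $\theta_{T}$, this means $Tg_{\beta}\xrightarrow{\theta}0_{E}$ and $T\!\left((f_{\alpha}-g_{\beta})^{+}\right)\xrightarrow[\alpha]{\eta}0_{E}$ for every $\beta$. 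Since $T$ is positive we have $(Tf_{\alpha}-Tg_{\beta})^{+}\le T\!\left((f_{\alpha}-g_{\beta})^{+}\right)$ (as $(f_{\alpha}-g_{\beta})^{+}\ge f_{\alpha}-g_{\beta}$ and $\ge 0_{F}$), so local solidness of $\eta$ yields $(Tf_{\alpha}-Tg_{\beta})^{+}\xrightarrow[\alpha]{\eta}0_{E}$ for each $\beta$. Thus $(Tg_{\beta})_{\beta\in B}\subset E_{+}$ is a controlling net witnessing $Tf_{\alpha}\xrightarrow{\eta\theta}0_{E}$, i.e., $f_{\alpha}\xrightarrow{(\eta\theta)_{T}}0_{F}$.

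For the reverse inclusion when $T$ is a surjective lattice homomorphism, take $0_{F}\le f_{\alpha}\xrightarrow{(\eta\theta)_{T}}0_{F}$, so $Tf_{\alpha}\xrightarrow{\eta\theta}0_{E}$. Pick a $\theta$-null controlling net $(h_{\beta})_{\beta\in B}\subset E_{+}$ with $(Tf_{\alpha}-h_{\beta})^{+}\xrightarrow[\alpha]{\eta}0_{E}$ for every $\beta$. Because $T$ is a surjective lattice homomorphism, $T(F_{+})=E_{+}$, so we may choose $g_{\beta}\in F_{+}$ with $Tg_{\beta}=h_{\beta}$. Then $g_{\beta}\xrightarrow{\theta_{T}}0_{F}$, and since $T$ preserves the lattice operations,
\[
T\!\left((f_{\alpha}-g_{\beta})^{+}\right)=(Tf_{\alpha}-Tg_{\beta})^{+}=(Tf_{\alpha}-h_{\beta})^{+}\xrightarrow[\alpha]{\eta}0_{E},
\]
which gives $(f_{\alpha}-g_{\beta})^{+}\xrightarrow[\alpha]{\eta_{T}}0_{F}$ for every $\beta$. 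Hence $(g_{\beta})_{\beta\in B}$ controls $f_{\alpha}\xrightarrow{\eta_{T}\theta_{T}}0_{F}$.

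The routine point to check is simply that positivity of $T$ gives the inequality $(Tf-Tg)^{+}\le T((f-g)^{+})$, whereas the homomorphism property upgrades this to an equality; the main (though still mild) obstacle is the lifting step in the second half, where one needs a positive preimage of the controlling net, and this is exactly what surjectivity combined with $T(F_{+})=E_{+}$ (which follows from $T$ being a surjective lattice homomorphism) delivers. No other components beyond the definitions of $\eta_{T}$, composition, and local solidness of $\eta$ are required.
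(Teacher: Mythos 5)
Your proof is correct and follows essentially the same route as the paper: positivity of $T$ gives $(Tf_{\alpha}-Tg_{\beta})^{+}\le T\bigl((f_{\alpha}-g_{\beta})^{+}\bigr)$ for the first inequality, and in the surjective homomorphism case one lifts the controlling net to a positive preimage and uses that $T$ commutes with $(\cdot)^{+}$. The only difference is that you spell out the justification for $T(F_{+})=E_{+}$ and for the positivity inequality, which the paper leaves implicit.
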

\begin{proof}
Assume that $0_{F}\le f_{\alpha}\xrightarrow[]{\eta_{T}\theta_{T}} 0_{F}$ so that there is a $\theta_{T}$-null net $\left(g_{\beta}\right)_{\beta\in B}\subset F_{+}$ such that $\left(f_{\alpha}-g_{\beta}\right)^{+}\xrightarrow[]{\eta_{T}} 0_{F}$, for every $\beta$. Then, $Tg_{\beta}\xrightarrow[]{\eta}0_{F}$ and $0_{F}\le \left(Tf_{\alpha}-Tg_{\beta}\right)^{+}\le T\left(f_{\alpha}-g_{\beta}\right)^{+}\xrightarrow[]{\eta_{T}} 0_{F}$, for every $\beta$. This means that $\left(Tf_{\alpha}\right)_{\alpha\in A}$ is $\eta\theta$-null with the controlling net $\left(Tg_{\beta}\right)_{\beta\in B}$.\medskip

Assume that $T$ is a surjective homomorphism and $0_{F}\le f_{\alpha}\xrightarrow[]{\left(\eta\theta\right)_{T}} 0_{F}$, so that there is a $\theta$-null net $\left(e_{\beta}\right)_{\beta\in B}\subset E_{+}$ such that $\left(Tf_{\alpha}-e_{\beta}\right)^{+}\xrightarrow[]{\eta} 0_{F}$, for every $\beta$. Since $T$ is a surjective homomorphism, there is $g_{\beta}\in F_{+}$ such that $Tg_{\beta}=e_{\beta}$, for every $\beta$. It follows that $g_{\beta}\xrightarrow[]{\theta_{T}}0_{F}$, and $T\left(f_{\alpha}-g_{\beta}\right)^{+}=\left(Tf_{\alpha}-e_{\beta}\right)^{+}\xrightarrow[]{\eta} 0_{F}$, for every $\beta$. Thus, $ f_{\alpha}\xrightarrow[]{\eta_{T}\theta_{T}} 0_{F}$.
\end{proof}

The final operation we consider in this section is the Choquet modification (see \cite[Section 9]{ectv}).

\begin{proposition}\label{choq}
If $\eta,\theta$ are locally solid additive convergences on $F$, then $\mathrm{c}\eta \mathrm{c}\theta=\left(\mathrm{c}\eta\right)\theta\ge \mathrm{c}\left(\eta\theta\right)$.
\end{proposition}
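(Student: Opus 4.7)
The proof splits naturally into the equality $\mathrm{c}\eta\cdot\mathrm{c}\theta=(\mathrm{c}\eta)\theta$ and the inequality $(\mathrm{c}\eta)\theta\ge\mathrm{c}(\eta\theta)$. Both parts are expected to rely on the key property of the Choquet modification established in \cite[Section 9]{ectv}: that $\mathrm{c}$ preserves adherences, i.e., $\overline{A}^{1}_{\mathrm{c}\tau}=\overline{A}^{1}_{\tau}$ for every $A\subset F$ and every locally solid additive convergence $\tau$.

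For the equality, my plan is to apply Corollary \ref{mon}(ii), reducing the problem to showing that $\theta$ and $\mathrm{c}\theta$ have the same null co-ideals in $F_{+}$. One inclusion is immediate from $\mathrm{c}\theta\le\theta$. For the converse, Corollary \ref{coid} says that a co-ideal $G\subset F_{+}$ is $\tau$-null iff $0_{F}\in\overline{G}^{1}_{\tau}$, and the adherence-preserving property of $\mathrm{c}$ then yields the required equivalence.

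For the inequality, my plan follows the template of Propositions \ref{unbo} and \ref{pullb}. Starting from $\left(f_{\alpha}\right)_{\alpha\in A}\subset F_{+}$ with $f_{\alpha}\xrightarrow[]{(\mathrm{c}\eta)\theta}0_{F}$, I would apply Proposition \ref{produc}(ii) to obtain a sandwich $f_{\alpha}\le h_{\alpha,\beta}\xrightarrow[\alpha]{\mathrm{c}\eta}g_{\beta}\xrightarrow[\beta]{\theta}0_{F}$, then use the refinement-style characterization of $\mathrm{c}$ from \cite[Section 9]{ectv} to extract $\eta$-convergence along appropriate sub-objects of $h_{\alpha,\beta}-g_{\beta}$, re-apply Proposition \ref{produc}(ii) to recover $\eta\theta$-convergence on those sub-objects, and finally match the pattern defining $\mathrm{c}(\eta\theta)$-convergence of the original net.

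The main obstacle is this reassembly step in the inequality. The equality is conceptually clean once adherence-preservation is in hand, but the inequality requires carefully matching the specific refinement mechanism underlying $\mathrm{c}$ with the controller-based description of composition. I expect the first-part equality to be useful here, since it permits the free replacement of $\theta$ by $\mathrm{c}\theta$ inside compositions during the bookkeeping.
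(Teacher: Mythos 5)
Your treatment of the equality $\mathrm{c}\eta\,\mathrm{c}\theta=(\mathrm{c}\eta)\theta$ is correct and close in spirit to the paper's: both reduce to Corollary \ref{mon}(ii) by showing that $\theta$ and $\mathrm{c}\theta$ have the same null co-ideals. The paper quotes the fact that $\theta$ agrees with $\mathrm{c}\theta$ on monotone nets, while you derive the same conclusion from Corollary \ref{coid} together with the adherence-preservation property $\overline{A}^{1}_{\mathrm{c}\tau}=\overline{A}^{1}_{\tau}$; the latter is a valid (and standard) consequence of the ultrafilter definition of the Choquet modification, so this part stands.

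The inequality $(\mathrm{c}\eta)\theta\ge\mathrm{c}(\eta\theta)$ is where you have a genuine gap, and you have in fact pointed at it yourself: the ``reassembly step'' is exactly what is not done, and a net-based plan via the sandwich of Proposition \ref{prod}(ii) is the wrong vehicle. The Choquet modification is an ultrafilter condition, and extracting convergent sub-objects of $h_{\alpha,\beta}-g_{\beta}$ separately for each $\beta$ gives you no control over a single refinement of $(f_{\alpha})$ that works simultaneously for all $\beta$ — subnets do not commute with the controller structure. The missing idea is to switch to filters and use the ``Moreover'' clause of Proposition \ref{prod}: if $\Fo$ is $(\mathrm{c}\eta)\theta$-null with controlling net $(g_{\beta})_{\beta\in B}$, so that $\left(\left|\Fo\right|-g_{\beta}\right)^{+}$ is $\mathrm{c}\eta$-null for every $\beta$, take an arbitrary ultrafilter $\mathcal{U}\supset\Fo$. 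Then $\left(\left|\mathcal{U}\right|-g_{\beta}\right)^{+}$ is again an ultrafilter (images of ultrafilters are ultrafilters) refining $\left(\left|\Fo\right|-g_{\beta}\right)^{+}$, hence $\eta$-null by the very definition of $\mathrm{c}\eta$. Thus the \emph{same} controlling net $(g_{\beta})$ witnesses $\mathcal{U}\xrightarrow[]{\eta\theta}0_{F}$, and since $\mathcal{U}$ was arbitrary, $\Fo\xrightarrow[]{\mathrm{c}(\eta\theta)}0_{F}$. Note also that the first-part equality plays no role in this argument, contrary to your expectation, and that the sandwich characterization you invoke is Proposition \ref{prod}(ii), not Proposition \ref{produc}(ii).
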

\begin{proof}
First, recall that $\theta$ agrees with $\mathrm{c}\theta$ on monotone nets (see \cite[Proposition 9.7]{ectv}). Hence, according to Corollary \ref{mon}, $\mathrm{c}\eta \mathrm{c}\theta=\left(\mathrm{c}\eta\right)\theta$.\medskip

Assume that $\Fo$ is a $\left(\mathrm{c}\eta\right)\theta$-null filter on $F$, so that there is a $\theta$-null net $\left(g_{\beta}\right)_{\beta\in B}\subset F_{+}$ such that $\left(\left|\Fo\right|-g_{\beta}\right)^{+}\xrightarrow[]{\mathrm{c}\eta} 0_{F}$, for every $\beta$. Let $\mathcal{U}\supset\Fo$ be an unltrafilter. For every $\beta$, we have that $\left(\left|\mathcal{U}\right|-g_{\beta}\right)^{+}$ is an ultrafilter on $F$ that contains $\left(\left|\Fo\right|-g_{\beta}\right)^{+}$, and so is $\eta$-null. As $\beta$ is arbitrary, according to Proposition \ref{prod}, $\mathcal{U}\xrightarrow[]{\eta\theta}0_{F}$, and since $\mathcal{U}$ is arbitrary, we conclude that $\Fo\xrightarrow[]{\mathrm{c}\left(\eta\theta\right)}0_{F}$.
\end{proof}

The last three results motivate the following questions.

\begin{question}
Do we always have $\mathrm{c}\eta \mathrm{c}\theta=\mathrm{c}\left(\eta\theta\right)$, $\eta_{T}\theta_{T}=\left(\eta\theta\right)_{T}$ and $\mathrm{u}_{E}\eta \mathrm{u}_{E}\theta=\mathrm{u}_{E}\left(\eta\theta\right)$?
\end{question}

\subsection{Idempotent convergences}\label{idem}

The remaining part of the section will be devoted to the \emph{idempotent} convergences, i.e. locally solid additive convergences $\eta$ on $F$ such that $\eta^{2}=\eta$.

\begin{example}
It was observed in \cite[Examples 2.5 and 2.6, and Proposition 4.5]{erz} that order convergence along with any locally solid additive topology is idempotent, while relative uniform convergence is idempotent under the additional assumption that $F$ has $\sigma$-property (every sequence is contained in a principal ideal).\medskip

Relative uniform convergence can be idempotent even without $\sigma$-property. Indeed, let $F=c_{00}$ which clearly does not have the $\sigma$-property. Since $F$ is Archimedean, order convergence on $F$ is weaker than relative uniform convergence. On the other hand, every order convergent net is eventually order bounded, hence finitely dimensional, and so order convergence coincides with the finite-dimensional convergence on $F$. As the latter is the strongest linear convergence on $F$ (see \cite[Proposition 7.12]{ectv}), all three coincide. Since order convergence is always idempotent, it follows that relative uniform convergence is idempotent on $F$.
\qed\end{example}

\begin{question}
Is it true that if $X$ is a Tychonoff space such that $\mathrm{ru}$ is idempotent on $\Co\left(X\right)$, then the latter has $\sigma$-property? Same question for $\Co^{\8}\left(X\right)$, for $X$ extremally disconnected.
\end{question}

\begin{example}\label{sigmao}
Let us show that $\sigma$-order convergence is also idempotent. Assume that $\left(f_{\alpha}\right)_{\alpha\in A}\subset F_{+}$ is such that there is a decreasing $\mathrm{\sigma o}$-null net $\left(g_{\beta}\right)_{\beta\in B}$ such that $\left(f_{\alpha}-g_{\beta}\right)^{+}\xrightarrow[]{\mathrm{\sigma o}}0_{F}$, for every $\beta$. Since $\left(g_{\beta}\right)_{\beta\in B}$ is decreasing and $\mathrm{\sigma o}$-null, it is not hard to show that there are $\left(\beta_{n}\right)_{n\in \N}$ such that $\bigwedge\limits_{n\in\N}g_{\beta_{n}}=0_{F}$. For every $n\in\N$ let $H_{n}$ be a countable set which witnesses $\left(f_{\alpha}-g_{\beta_{n}}\right)^{+}\xrightarrow[]{\mathrm{\sigma o}}0_{F}$. It is left to observe that $H:=\bigcup\limits_{n\in\N}\left(g_{\beta_{n}}+H_{n}\right)$ witnesses $f_{\alpha}\xrightarrow[]{\mathrm{\sigma o}}0_{F}$.
\qed\end{example}\smallskip

\begin{example}\label{idempotent}
The unbounded modification of an idempotent convergence is idempotent. Let $\eta$ be an idempotent convergence on $F$ and let $E\subset F$ be an ideal. Then, $\mathrm{u}_{E}\eta\ge\mathrm{u}_{E}\eta\mathrm{u}_{E}\eta\ge \mathrm{u}_{E}\left(\eta^{2}\right)=\mathrm{u}_{E}\eta$,
where the first inequality follows from part (i) of Proposition \ref{produc}, the second one -- from Proposition \ref{unbo}, whereas the last equality results from idempotency of $\eta$.\medskip

Analogously, using propositions \ref{pullb} and \ref{choq} one can show that the Choquet modification and any pull-back of an idempotent convergence are idempotent. In particular, a restriction of an idempotent convergence to a sublattice of $F$ is idempotent.
\qed\end{example}\smallskip

\begin{example}\label{supid}
It can be easily checked that the supremum of any collection of idempotent convergences is idempotent.
\qed\end{example}\smallskip

\begin{example}\label{idemod}
Any locally solid additive convergence can be ``idempotentified''. Indeed, by Example \ref{supid} the supremum of all idempotent convergences weaker than a given convergence $\eta$ is idempotent. Let us consider a more explicit way to construct it.

Let $n_{0}$ be the first ordinal of a cardinality larger than that of the set of all convergences on $F$. In the notation of Proposition \ref{infiniteco}, let $\eta_{n}=\eta$, for every successor ordinal $n<n_{0}$. Then, the decreasing net $\left(\eta^{n}\right)_{n\le n_{0}}$ of convergences has to stabilize by the cardinality reasons. It is then easy to see by Proposition \ref{infiniteco} that $\eta^{n_{0}}$ is idempotent.

The class of vector lattices with $\mathrm{ru}^{\omega_{1}}=\sigma\mathrm{o}$ was investigated in \cite{fremlin}. That paper also contains examples from this class which show that any countable ordinal might be needed to idempotify $\mathrm{ru}$. In particular, it was observed there that for $F=\R^{\left[0,1\right]}$, we have $\mathrm{ru}>\sigma\mathrm{o}=\mathrm{ru}^{2}$ (see also Example \ref{ru2}), which is the first explicit example of a non-idempotent locally solid convergence.
\qed\end{example}\smallskip

\begin{example}
Let $X$ be a convergence space. Recall that a net $\left(f_{\alpha}\right)_{\alpha\in A}\subset \Co\left(X\right)$ is \emph{continuously} convergent to $f\in\Co\left(X\right)$ if for every $x\in X$ and any net $\left(x_{\gamma}\right)_{\gamma\in\Gamma}\subset X$ convergent to $x$, we have $f_{\alpha}\left(x_{\gamma}\right)\xrightarrow[\alpha,\gamma]{}f\left(x\right)$. Let us show that this convergence is idempotent. Assume that $\0\le g_{\beta}\xrightarrow[]{\mathrm{c}}\0$ and $\left(f_{\alpha}\right)_{\alpha\in A}\subset \Co\left(X\right)_{+}$ is such that $\left(f_{\alpha}-g_{\beta}\right)^{+}\xrightarrow[]{\mathrm{c}}\0$, for every $\beta$. Fix $x$, $x_{\gamma}\to x$ and $\varepsilon>0$. We have that $g_{\beta}\left(x_{\gamma}\right)\xrightarrow[]{}0$, and so there are $\beta_{0},\gamma_{0}$ such that $g_{\beta}\left(x_{\gamma}\right)\le\frac{\varepsilon}{2}$, for all $\beta\ge\beta_{0}$ and $\gamma\ge\gamma_{0}$. As $\left(f_{\alpha}-g_{\beta_{0}}\right)^{+}\xrightarrow[]{c}\0$, there are $\alpha_{0}$ and $\gamma_{1}\ge\gamma_{0}$ such that $\left(f_{\alpha}\left(x_{\gamma}\right)-g_{\beta_{0}}\left(x_{\gamma}\right)\right)^{+}\le \frac{\varepsilon}{2}$, for all $\alpha\ge\alpha_{0}$ and $\gamma\ge\gamma_{1}$. Then, $f_{\alpha}\left(x_{\gamma}\right)\le \left(f_{\alpha}\left(x_{\gamma}\right)-g_{\beta_{0}}\left(x_{\gamma}\right)\right)^{+}+g_{\beta_{0}}\left(x_{\gamma}\right)\le \frac{\varepsilon}{2}+\frac{\varepsilon}{2}\le\varepsilon$, for all $\alpha\ge\alpha_{0}$ and $\gamma\ge\gamma_{1}$. Since $\varepsilon$ was chosen arbitrarily, it follows that $f_{\alpha}\left(x_{\gamma}\right)\xrightarrow[\alpha,\gamma]{}0$. Due to arbitrariness of $\left(x_{\gamma}\right)_{\gamma\in\Gamma}$, we conclude that $f_{\alpha}\xrightarrow[]{\mathrm{c}}\0$.
\qed\end{example}\smallskip

\begin{remark}
Let $X$ be a Tychonoff topological space. One can show that $\0\le f_{\alpha}\xrightarrow[]{\mathrm{c}}\0$ iff for every $x\in X$ and for every $\varepsilon>0$ there are an open neighborhood $U$ of $x$ and $\alpha_{0}$ such that $\left.f_{\alpha}\right|_{U}\le\varepsilon$, for every $\alpha\ge\alpha_{0}$. Interchanging the middle quantifiers we arrive at \emph{local uniform} convergence: $\0\le f_{\alpha}\xrightarrow[]{\mathrm{lu}}\0$ if for every $x\in X$ there is an open neighborhood $U$ of $x$ such that for every $\varepsilon>0$ there is $\alpha_{0}$ such that $\left.f_{\alpha}\right|_{U}\le\varepsilon$, for every $\alpha\ge\alpha_{0}$. It follows immediately that local uniform convergence is a Hausdorff linear locally solid convergence. It would be interesting to characterize for which $X$'s, this convergence is idempotent. This is the case if $X$ is locally compact, since then $\mathrm{lu}$ coincides with the compact-open topology.
\qed\end{remark}

From the monotonicity of the product, it follows that if $\eta$ is idempotent and $\theta\ge\eta$, then $\eta\ge\eta\theta\ge\eta\eta=\eta$. If $\theta$ is vertical (see Example \ref{vertical}), and $\eta\ge\theta$, then $\theta\ge\eta\theta\ge\theta\theta=\theta$.

\begin{question}
Is the product of topological (idempotent) locally solid additive convergences topological (idempotent)?
\end{question}

It follows from Theorem \ref{product} that if $F$ is endowed with an idempotent convergence $\eta$, the adherence of every ideal in $F$ is closed (and so equal to its closure). Indeed, if $H\subset F$ is an ideal, then $\overline{\overline{H}^{1}_{\eta}}^{1}_{\eta}=\overline{H}^{1}_{\eta^{2}}=\overline{H}^{1}_{\eta}$. Similarly, the adherence in $F_{+}$ of any lattice-ideal of $F_{+}$ is also closed. Moreover, if $G\subset F_{+}$ is a co-ideal, then $G\to0_{F}$ iff $0_{F}\in\overline{G}$. It also follows from Proposition \ref{aeub}, that for any net $\left(f_{\alpha}\right)_{\alpha\in A}\subset F$, the set $\left(f_{\alpha}\right)_{\alpha\in A}^{\nearrow,\eta}$ is a closed co-ideal.\medskip

We conclude this section by showing that any additive idempotent convergence has a closed ``ideal of linearity''.

\begin{proposition}
If $F$ is endowed with an additive locally solid idempotent convergence, then $\left\{f\in F,~ \frac{1}{n}f\to 0_{F}\right\}$ is a closed ideal.
\end{proposition}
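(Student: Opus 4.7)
The plan is to first verify that $L:=\{f\in F:\tfrac{1}{n}f\xrightarrow[]{\eta}0_{F}\}$ is an ideal, and then to exploit the idempotency of $\eta$ together with Lemma \ref{psola} (and the closedness of adherences of ideals under idempotent convergence, noted in the preceding paragraph) to show that $L$ is closed.

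For the ideal property, solidness of $L$ is immediate from local solidness: if $|g|\le|f|$ and $f\in L$, then $0_{F}\le\tfrac{1}{n}|g|\le\tfrac{1}{n}|f|\xrightarrow[]{\eta}0_{F}$, which forces $\tfrac{1}{n}g\xrightarrow[]{\eta}0_{F}$. Closure under addition is additivity of $\eta$. For scalar multiplication by $a\in\R$, I bound $\tfrac{1}{n}|af|\le\lceil|a|\rceil\cdot\tfrac{1}{n}|f|$ and write the right-hand side as a sum of $\lceil|a|\rceil$ copies of $\tfrac{1}{n}|f|$, which is $\eta$-null by finite additivity.

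For closedness, since $L$ is an ideal, the observation $\overline{L}^{1}_{\eta^{2}}=\overline{\overline{L}^{1}_{\eta}}^{1}_{\eta}$ from Theorem \ref{product} combined with $\eta^{2}=\eta$ says that $\overline{L}^{1}$ is adherent-closed, hence equals the closure; so it suffices to prove $\overline{L}^{1}\subseteq L$. Using that both $L$ and the adherence of a solid set are solid, I reduce to the case $0_{F}\le f\in\overline{L_{+}}^{1}$. By Lemma \ref{psola} I pick a net $\left(f_{\alpha}\right)_{\alpha\in A}\subset L_{+}\cap[0_{F},f]$ with $f_{\alpha}\xrightarrow[]{\eta}f$, and set $g_{\alpha}:=f-f_{\alpha}\in F_{+}$, which is $\eta$-null.

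The heart of the argument is the identity
\[
\tfrac{1}{n}f-g_{\alpha}=\tfrac{1}{n}f_{\alpha}-\bigl(1-\tfrac{1}{n}\bigr)g_{\alpha},
\]
which for $n\ge 1$ gives $\bigl(\tfrac{1}{n}f-g_{\alpha}\bigr)^{+}\le\tfrac{1}{n}f_{\alpha}$. Since $f_{\alpha}\in L$, this upper bound is $\eta$-null in $n$, and local solidness yields $\bigl(\tfrac{1}{n}f-g_{\alpha}\bigr)^{+}\xrightarrow[n]{\eta}0_{F}$ for every $\alpha$. This is exactly the defining condition for $\tfrac{1}{n}f\xrightarrow[n]{\eta\eta}0_{F}$ with controlling net $(g_{\alpha})$. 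Idempotency then upgrades this to $\tfrac{1}{n}f\xrightarrow[]{\eta}0_{F}$, i.e.\ $f\in L$. I do not anticipate any serious obstacle; the only delicate point is producing the controlling net, which the displayed identity furnishes automatically from the approximating sequence supplied by Lemma \ref{psola}.
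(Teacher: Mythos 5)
Your proof is correct and follows essentially the same route as the paper: both exhibit $\tfrac{1}{n}f$ as $\eta^{2}$-null via the inequality $\tfrac{1}{n}f\le\tfrac{1}{n}f_{\alpha}+(f-f_{\alpha})$ with controlling net $f-f_{\alpha}$ coming from an approximating net in $L_{+}\cap[0_{F},f]$, and then invoke idempotency. The paper phrases this through the monotone net $[0_{F},f]\cap L\to f$ and condition (ii) of Proposition \ref{prod} rather than the raw definition, but the argument is the same.
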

\begin{proof}
It is straightforward to verify that the set $E$ in question is an ideal. To show that it is closed, let $f\in\overline{E}^{1}_{+}\subset \overline{E_{+}}^{1}$, so that $\left[0_{F},f\right]\cap E\to f$. We have that $\frac{1}{n}f\le \frac{1}{n}e+f-e\xrightarrow[n\in\N]{}f-e\xrightarrow[{e\in \left[0_{F},f\right]\cap E}]{}0_{F}$, and since the convergence is idempotent, it follows that $\frac{1}{n}f\to 0_{F}$, hence $f\in E$.
\end{proof}

\section{Some applications}

\subsection{Existence of the weakest locally solid convergence}\label{weake}

In this subsection we will focus on minimal and weakest Hausdorff locally solid convergences, and close some gaps that were left open in \cite[Section 10]{ectv}. Throughout this section $F$ is an Archimedean vector lattice. The following simple fact will come very handy.

\begin{lemma}\label{weaker}
For every additive Hausdorff locally solid convergence on $F$ there is a weaker linear Hausdorff locally solid convergence.
\end{lemma}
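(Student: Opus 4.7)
The natural candidate is the composition $\theta := \eta\,\mathrm{ru}$, where $\mathrm{ru}$ is the relative uniform convergence from Example \ref{lru}. The plan is to verify that $\theta$ has all three required properties by invoking Proposition \ref{produc} and then checking that $\mathrm{ru}$ is itself Hausdorff (this is where the standing assumption that $F$ is Archimedean enters).

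First, by Proposition \ref{produc}(i), $\eta\,\mathrm{ru}\le\eta$, so $\theta$ is automatically weaker than the given convergence. Next, since $\mathrm{ru}$ is linear, Proposition \ref{produc}(iv) gives that $\theta$ is linear. The only remaining point is Hausdorffness: by Proposition \ref{produc}(iii), $\eta\,\mathrm{ru}$ is Hausdorff as soon as both $\eta$ and $\mathrm{ru}$ are, and $\eta$ is so by hypothesis.

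The main technical check is therefore that $\mathrm{ru}$ is Hausdorff on an Archimedean lattice. Recall $\mathrm{ru}=\bigwedge_{e\in F_{+}}\|\cdot\|_{e}$, where each $\|\cdot\|_{e}$ is the locally solid additive convergence obtained by extending the norm on the principal ideal $F_{e}$ as in Example \ref{propadd}. The family $\{\|\cdot\|_{e}:e\in F_{+}\}$ is directed downward, since for $e_{1},e_{2}\in F_{+}$ the choice $e=e_{1}+e_{2}$ satisfies $F_{e_{i}}\subset F_{e}$ and $\|\cdot\|_{e}\le\|\cdot\|_{e_{i}}$ on the relevant ideals. Thus, by the general fact recalled in the preliminaries, it suffices to see that each $\|\cdot\|_{e}$ is Hausdorff, and that a constant nonzero net cannot be $\mathrm{ru}$-null. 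If $0_{F}<f$ and $f$ were $\|\cdot\|_{e}$-null for some $e\in F_{+}$, then $f\in F_{e}$ with $\|f\|_{e}=0$, i.e.\ $|f|\le\tfrac{1}{n}e$ for every $n\in\N$, which forces $f=0_{F}$ by the Archimedean property. Since $\mathrm{ru}$-convergence of a net is $\|\cdot\|_{e}$-convergence for some $e$, this rules out a nontrivial constant null net, so $\mathrm{ru}$ is Hausdorff.

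I do not expect any serious obstacle here: every ingredient is already on the table, and the only substantive content is the observation that $\mathrm{ru}$ is Hausdorff on Archimedean lattices, which reduces in one line to the Archimedean property. The value of the lemma is entirely conceptual, in that composing with $\mathrm{ru}$ is a painless way to trade additivity for linearity without sacrificing Hausdorffness or going above the original convergence.
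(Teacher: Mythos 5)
Your proposal is correct and follows exactly the paper's route: the paper's proof is the one-line observation that $\eta\,\mathrm{ru}$ works by Proposition \ref{produc}. Your additional verification that $\mathrm{ru}$ is Hausdorff on an Archimedean lattice (via the Archimedean property and the downward-directedness of $\{\|\cdot\|_{e}\}_{e\in F_{+}}$) is a correct filling-in of a detail the paper leaves implicit.
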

\begin{proof}
If $\eta$ is an additive Hausdorff locally solid convergence on $F$, the product $\eta\mathrm{ru}$ is a convergence that satisfies the required properties due to Proposition \ref{produc}.
\end{proof}

\begin{question}
Find an explicit description of the strongest linear (locally solid) convergence weaker than a given additive (locally solid) one.
\end{question}

\begin{corollary}
\item[(i)] Every minimal additive Hausdorff locally solid convergence on $F$ is linear.
\item[(ii)] Every minimal linear Hausdorff locally solid convergence on $F$ is also a minimal additive Hausdorff locally solid convergence.
\item[(iii)] The weakest linear Hausdorff locally solid convergence on $F$ (if exists) is also the weakest additive Hausdorff locally solid convergence.
\end{corollary}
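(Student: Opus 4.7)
The plan is to derive all three parts directly from Lemma \ref{weaker}, exploiting that it provides a canonical way to ``linearize'' any additive Hausdorff locally solid convergence while making it (weakly) smaller. The single recurring gadget is the product $\eta\mathrm{ru}$: by Proposition \ref{produc}(i),(iii),(iv) it is Hausdorff, locally solid, linear (since $\mathrm{ru}$ is linear), and weaker than $\eta$. Nothing in this corollary requires new ideas; the content is just a squeezing argument.

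For (i), suppose $\eta$ is a minimal additive Hausdorff locally solid convergence on $F$. Lemma \ref{weaker} yields a linear Hausdorff locally solid convergence $\zeta\le\eta$ (explicitly $\zeta=\eta\mathrm{ru}$). Since $\zeta$ is in particular an additive Hausdorff locally solid convergence and $\eta$ is minimal among those, we must have $\zeta=\eta$; hence $\eta$ is linear.

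For (ii), let $\eta$ be a minimal linear Hausdorff locally solid convergence, and suppose $\theta$ is an additive Hausdorff locally solid convergence with $\theta\le\eta$. Applying Lemma \ref{weaker} to $\theta$ gives a linear Hausdorff locally solid convergence $\theta\mathrm{ru}\le\theta\le\eta$. Minimality of $\eta$ in the linear class forces $\theta\mathrm{ru}=\eta$, and then $\eta=\theta\mathrm{ru}\le\theta\le\eta$ collapses to $\theta=\eta$. Thus $\eta$ is minimal among additive Hausdorff locally solid convergences as well.

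For (iii), let $\eta$ denote the weakest linear Hausdorff locally solid convergence on $F$, assumed to exist. For any additive Hausdorff locally solid convergence $\theta$, the convergence $\theta\mathrm{ru}$ is linear, Hausdorff, and locally solid, hence $\eta\le\theta\mathrm{ru}\le\theta$ by the defining property of $\eta$ together with Proposition \ref{produc}(i). So $\eta$ is weaker than every additive Hausdorff locally solid convergence; being additive itself, $\eta$ is therefore the weakest such convergence. There is no real obstacle here — the entire argument is the observation that the operator $\theta\mapsto\theta\mathrm{ru}$ provides a cofinal descent from the additive class into the linear class within the same Hausdorff locally solid world.
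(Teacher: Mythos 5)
Your proof is correct and follows essentially the same route as the paper: all three parts are squeezed out of Lemma \ref{weaker} via the convergence $\eta\mathrm{ru}$, using that a linear Hausdorff locally solid convergence is in particular an additive one. No issues.
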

\begin{proof}
A minimal additive Hausdorff locally solid convergence has to be equal to the linear convergence from Lemma \ref{weaker}, and thus is itself linear. If $\eta$ is a minimal linear Hausdorff locally solid convergence, $\theta\le\eta$ is an additive Hausdorff locally solid convergence, and $\zeta\le\theta$ is the linear convergence from Lemma \ref{weaker}, it follows from minimality that $\zeta=\eta=\theta$. The last claim is proven similarly.
\end{proof}

It follows that the sets of minimal additive and linear Hausdorff locally solid convergences coincide. Therefore, we can speak of simply \emph{minimal Hausdorff locally solid convergences}. Similarly we can speak of \emph{the weakest Hausdorff locally solid convergence}.\medskip

It was proven in \cite[Proposition 10.4]{ectv} that if $F$ is an atomic Archimedean vector lattice then the coordinate-wise convergence is the weakest Hausdorff linear locally solid convergence. Before proving a kind of a converse to this statement, we need the following concept.\medskip

Call convergences $\eta,\theta$ on a set $X$ \emph{compatible} if the diagonal is closed in $\left(X\times X,\eta\times\theta\right)$, and \emph{incompatible} otherwise, i.e. there is a net which has two different limits with respect to $\eta,\theta$. In particular, $\eta$ is Hausdorff iff it is compatible with itself. Note that if $\eta,\theta$ are incompatible, and $\eta'\le\eta$, $\theta'\le\theta$, then $\eta',\theta'$ are incompatible. In particular there are no Hausdorff convergences weaker than both $\eta,\theta$.

\begin{theorem}\label{weakest}
TFAE:
\item[(i)] $F$ is atomic;
\item[(ii)] $\mathrm{uo}$ is a minimal Hausdorff locally solid convergence on $F$;
\item[(iii)] $\mathrm{cuo}$ is a minimal Hausdorff locally solid convergence on $F$;
\item[(iv)] There is a unique minimal Hausdorff locally solid convergence on $F$;
\item[(v)] $F$ admits the weakest Hausdorff locally solid convergence.
\end{theorem}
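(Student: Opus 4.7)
The plan is to prove the equivalences via two chains meeting at $\mathrm{(i)}$: the cycle $\mathrm{(i)}\Rightarrow\mathrm{(ii)}\Rightarrow\mathrm{(iii)}\Rightarrow\mathrm{(i)}$, together with $\mathrm{(iv)}\Leftrightarrow\mathrm{(v)}$ and $\mathrm{(i)}\Leftrightarrow\mathrm{(v)}$.

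The easy directions come first. The implication $\mathrm{(v)}\Rightarrow\mathrm{(iv)}$ is immediate since the weakest Hausdorff locally solid convergence is the unique minimal one. Conversely, the preliminaries guarantee that the infimum of a decreasing net of Hausdorff locally solid convergences remains Hausdorff and locally solid, so Zorn's lemma yields a minimal one beneath every Hausdorff locally solid convergence; uniqueness then promotes this minimal to the weakest. For $\mathrm{(i)}\Rightarrow\mathrm{(v)}$ I invoke \cite[Proposition 10.4]{ectv}, with the corollary after Lemma \ref{weaker} extending to the additive setting. For $\mathrm{(i)}\Rightarrow\mathrm{(ii)}$, on atomic Archimedean $F$ one checks by testing $\left|f_\alpha\right|\wedge g\xrightarrow{\mathrm{o}}0_F$ against each atom $g$ that $\mathrm{uo}$-convergence coincides with coordinate-wise convergence, which is the weakest by $\mathrm{(i)}\Rightarrow\mathrm{(v)}$, hence minimal. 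For $\mathrm{(ii)}\Rightarrow\mathrm{(iii)}$, $\mathrm{cuo}\le\mathrm{uo}$ is Hausdorff locally solid, so minimality of $\mathrm{uo}$ forces $\mathrm{cuo}=\mathrm{uo}$.

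The essential content lies in $\mathrm{(iii)}\Rightarrow\mathrm{(i)}$ and $\mathrm{(v)}\Rightarrow\mathrm{(i)}$, both proved by contrapositive. Assuming $F$ is not atomic, it contains a nontrivial non-atomic principal band $B_e$, inside which $e$ admits arbitrary decompositions into disjoint nonzero positive pieces. Using Theorem \ref{locs} I would build an additive locally solid convergence $\zeta$ on $F$ that makes appropriately chosen ``tall, thin'' disjoint sequences supported in $B_e$ null while remaining Hausdorff, and that is strictly weaker than $\mathrm{cuo}$, witnessed by a concrete net. The product $\zeta\mathrm{ru}$ furnished by Lemma \ref{weaker} is then a linear Hausdorff locally solid convergence dominated by $\zeta$, hence strictly weaker than $\mathrm{cuo}$, contradicting $\mathrm{(iii)}$. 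Running two such constructions in disjoint sub-bands of $B_e$ furnishes a pair of Hausdorff locally solid convergences on $F$ that are incompatible; by the compatibility remark just before the theorem, no Hausdorff locally solid convergence can lie beneath both, ruling out $\mathrm{(v)}$.

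The main obstacle is the construction of $\zeta$: it must be simultaneously weaker than $\mathrm{cuo}$ and Hausdorff, the latter demanding via the last clause of Theorem \ref{locs} that no nonzero positive element of $F$ be $\zeta$-null. In the atomic case the analogous construction collapses, since every nonzero positive element is detected by some atom, forcing coordinate-wise convergence; non-atomicity of $B_e$ is precisely what supplies the room for a convergence strictly between ``coordinate-wise over $B_e$'' and $\mathrm{cuo}$.
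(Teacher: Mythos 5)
Your framework (the easy implications, Zorn for the existence of minimal convergences, and reduction of the hard directions to a construction in the non‑atomic case) matches the paper's in outline, but the two implications that carry all the content are not actually proved, and the sketches you give for them would not work as stated. First, for $\mathrm{(iii)}\Rightarrow\mathrm{(i)}$ you propose to build a Hausdorff locally solid $\zeta$ strictly weaker than $\mathrm{cuo}$ by ``making appropriately chosen tall, thin disjoint sequences null.'' This cannot distinguish $\zeta$ from $\mathrm{cuo}$: disjoint sequences are already $\mathrm{uo}$-null in any vector lattice, hence $\mathrm{cuo}$-null, so nullity of disjoint sequences buys nothing weaker. More importantly, no construction is actually given; you correctly identify producing such a $\zeta$ as ``the main obstacle,'' but that obstacle \emph{is} the theorem. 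The paper avoids a direct proof of $\mathrm{(iii)}\Rightarrow\mathrm{(i)}$ altogether: it proves $\mathrm{(iii)}\Rightarrow\mathrm{(iv)}$ by citing that every minimal Hausdorff locally solid convergence must be Choquet and weaker than $\mathrm{uo}$ (\cite[Proposition 10.9 and Theorem 10.13]{ectv}), hence weaker than $\mathrm{cuo}$, so that minimality of $\mathrm{cuo}$ forces uniqueness; only one hard construction is then needed, for $\mathrm{(v)}\Rightarrow\mathrm{(i)}$.

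Second, for $\mathrm{(v)}\Rightarrow\mathrm{(i)}$ your plan of ``running two such constructions in disjoint sub-bands of $B_e$'' cannot yield incompatibility. Incompatibility requires a \emph{single} net with two distinct limits, one for each convergence; two convergences each detecting behaviour only inside one of two disjoint bands will not disagree about the limit of any one net in the required way. The paper's construction is essentially forced to work inside one principal ideal: represent $F_e$ densely in $\Co\left(K\right)$ with $Je=\1$, note $K$ has no isolated points, split $K$ by Hewitt's theorem into complementary dense subsets $L$ and $M$, take pointwise convergence over $L$ and over $M$, pull back along $J$ and extend to $F$. The Urysohn-type net $f_{A,B}$ (indexed by finite $A\subset L$, $B\subset M$, with $Jf_{A,B}$ vanishing on $A$ and equal to $1$ on $B$) then converges to $0_{F}$ in one convergence and to $e$ in the other. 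The dense/codense splitting of the spectrum, not a splitting of $F$ into disjoint bands, is the idea your proposal is missing; without it (or a genuine substitute) both hard implications remain unproved.
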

\begin{proof}
(i)$\Rightarrow$(ii): If $F$ is atomic, then the coordinate-wise convergence, which is the weakest Hausdorff locally solid convergence, coincides with $\mathrm{uo}$ (see \cite[Corollary 10.5]{ectv}).\medskip

(ii)$\Rightarrow$(iii) follows from $\mathrm{cuo}\le\mathrm{uo}$.\medskip

(iii)$\Rightarrow$(iv): It was proven in \cite[Proposition 10.9 and Theorem 10.13]{ectv} that every minimal Hausdorff locally solid convergence has to be Choquet, and weaker than $\mathrm{uo}$, hence weaker than $\mathrm{cuo}$. Since the latter is minimal, it is the only minimal Hausdorff locally solid convergence on $F$.\medskip

(iv)$\Rightarrow$(v) follows from the fact that the set of all Hausdorff locally solid (linear or additive) convergences satisfies the conditions of Zorn's lemma (downward).\medskip

(v)$\Rightarrow$(i): We will show that if $F$ is not atomic, there are two incompatible Hausdorff locally solid convergences on $F$. Since the weakest Hausdorff convergence would have to be weaker than both of them, it does not exist.\medskip

Let $e>0_{F}$ be in the atomless part of $F$. Let $J:F_{e}\to\Co\left(K\right)$ be an injective homomorphism with a dense range such that $Je=\1$. It follows from e.g. \cite[Lemma 4.2]{abt} that $K$ has no isolated points, and so according to \cite{hewitt} there is a dense $L\subset K$ such that $M:=K\backslash L$ is also dense. Let $\eta$ be a convergence on $\Co\left(K\right)$ defined by: $g_{\alpha}\xrightarrow{\eta}g$ if $g_{\alpha}\left(x\right)\to g\left(x\right)$, for every $x\in L$, i.e. it is the pointwise convergence over $L$. Let $\theta$ be the pointwise convergence over $M$ (also on $\Co\left(K\right)$). Both of these convergences are Hausdorff linear locally solid (because they are induced by separating collections of Riesz semi-norms).\medskip

We can now pull-back the convergences $\eta$ and $\theta$ along $J$ (which is injective) and obtain Hausdorff linear locally solid convergences $\eta_{J}$ and $\theta_{J}$ on $F_{e}$. Then, we extend these two convergences as explained before Example \ref{lru} to get Hausdorff locally solid additive convergences $\eta_{JF}$ and $\theta_{JF}$ on $F$. It is left to show that $\eta_{JF}$ and $\theta_{JF}$ are incompatible.\medskip

Let $A\subset L$ and $B\subset M$ be finite. Then, $A,B$ are closed disjoint sets, and so according to Sublattice Urysohn Lemma (see e.g. \cite[Proposition 2.1]{erz1}) there are $g_{A,B}\in JF_{e}\cap\left[\0,\1\right]$ such that $g_{A,B}\left(x\right)=0$, for every $x\in A$, and $g_{A,B}\left(x\right)=1$, for every $x\in B$. It follows that $g_{A,B}\xrightarrow[A,B]{\eta}\0$ and $g_{A,B}\xrightarrow[A,B]{\theta}\1$.

Let $f_{A,B}:=J^{-1}g_{A,B}\in F_{e}$, for finite $A\subset L$ and $B\subset M$. We have $Jf_{A,B}=g_{A,B}\xrightarrow[A,B]{\eta}\0$ and $J\left(e-f_{A,B}\right)=\1-g_{A,B}\xrightarrow[A,B]{\theta}\0$, hence $f_{A,B}\xrightarrow[A,B]{\eta_{J}}0_{F}$ and $f_{A,B}\xrightarrow[A,B]{\theta_{J}}e$. This implies that $f_{A,B}\xrightarrow[A,B]{\eta_{JF}}0_{F}$ and $f_{A,B}\xrightarrow[A,B]{\theta_{JF}}e$, which makes $\eta_{JF}$ and $\theta_{JF}$ incompatible.
\end{proof}

\begin{remark}
We answered \cite[Question 10.23]{ectv} in the negative. Along the way we also proved that it is not true that every Hausdorff Choquet unbounded idempotent order continuous locally solid linear convergence is a minimal Hausdorff locally solid convergence. Indeed, $\mathrm{cuo}$ is Choquet by definition, order continuous since both unbounded and Choquet modifications weaken the convergence, locally solid and unbounded by \cite[Propositions 9.7 and 9.8]{ectv}, idempotent by Example \ref{idempotent}, but is not minimal, unless $F$ is atomic.
\qed\end{remark}

\begin{question}
Is $\mathrm{cuo}$ the supremum of all minimal Hausdorff locally solid convergences on $F$?
\end{question}

\subsection{Applications to unbounded modification}\label{unb}

Throughout this subsection $F$ is an Archimedean vector lattice. Recall that the unbounded modification $\mathrm{u}_{A}\eta$ of a locally solid convergence $\eta$ on $F$ by $A\subset F$ is defined by $0_{F}\le f_{\alpha}\xrightarrow[]{\mathrm{u}_{A}\eta}0_{F}$ if $\left|a\right|\wedge f_{\alpha}\xrightarrow[]{\eta}0_{F}$, for every $a\in A$. We denote $\mathrm{u}\eta:=\mathrm{u}_{F}\eta$ and $\mathrm{u}_{a}\eta:=\mathrm{u}_{\left\{a\right\}}\eta$, for $a\in F$. Let us reproduce some basic facts about this modification.

\begin{proposition}[\cite{erz}, Proposition 3.1]\label{unba}Let $A\subset F$ and let $\eta$ be a locally solid additive convergence on $F$. Then:
\item[(i)] $u_{A}\eta$ is the weakest locally solid additive convergence on $F$, which is stronger than (in fact coincides with) $\eta$ on $\left[0_{F},\left|a\right|\right]$, for every $a\in A$.
\item[(ii)] $u_{A}\eta=u_{I\left(A\right)}\eta$; if $\eta$ is idempotent, then $u_{A}\eta=u_{\overline{I\left(A\right)}}\eta$.
\end{proposition}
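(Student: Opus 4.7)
The plan is to apply Theorem \ref{locs} to verify that $\mathrm{u}_A\eta$ is a locally solid additive convergence, establish the universal characterization in (i), and then handle the successive enlargements $A \subset I(A) \subset \overline{I(A)}$ in (ii), using only elementary lattice inequalities for the first enlargement and the composition machinery from the previous subsection for the second.

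For part (i), I would first check the three axioms of Theorem \ref{locs}: the quasi-subnet axiom and the solidity axiom transfer directly from $\eta$ via $|a|\wedge g_{\alpha}\le |a|\wedge f_{\alpha}$, while additivity uses the standard Riesz inequality $|a|\wedge(f+g)\le |a|\wedge f+|a|\wedge g$ for $f,g\ge 0_{F}$. For the characterization itself, if $0_{F}\le f_{\alpha}\le |a|$ then $|a|\wedge f_{\alpha}=f_{\alpha}$, so $\mathrm{u}_{A}\eta$ coincides with $\eta$ on each $[0_{F},|a|]$. Conversely, if $\theta$ is any locally solid additive convergence that is stronger than $\eta$ on every $[0_{F},|a|]$ and $0_{F}\le f_{\alpha}\xrightarrow{\theta}0_{F}$, then $|a|\wedge f_{\alpha}\xrightarrow{\theta}0_{F}$ by solidity of $\theta$, and since $|a|\wedge f_{\alpha}\in[0_{F},|a|]$ this gives $|a|\wedge f_{\alpha}\xrightarrow{\eta}0_{F}$, hence $\theta\ge\mathrm{u}_{A}\eta$.

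For the first equality in (ii), the inclusion $\mathrm{u}_{A}\eta\le\mathrm{u}_{I(A)}\eta$ is immediate from $A\subset I(A)$. For the reverse, pick $b\in I(A)$ and find $a_{1},\dots,a_{n}\in A$ and $N\in\mathbb{N}$ with $|b|\le N(|a_{1}|+\dots+|a_{n}|)$. Iterating $(x+y)\wedge z\le x\wedge z+y\wedge z$ together with $Na\wedge z\le N(a\wedge z)$ (valid for $a,z\ge 0_{F}$) gives
\[
|b|\wedge f_{\alpha}\le N\sum_{i=1}^{n}|a_{i}|\wedge f_{\alpha},
\]
and additivity plus solidity of $\eta$ finish this case. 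For the second equality, assume $\eta$ is idempotent; again $\mathrm{u}_{A}\eta\le\mathrm{u}_{\overline{I(A)}}\eta$ is trivial. For the converse, take $b\in\overline{I(A)}^{1}_{\eta}$ and a net $(c_{\beta})_{\beta\in B}\subset I(A)$ with $c_{\beta}\xrightarrow{\eta}b$; then $|b|\le|c_{\beta}|+|b-c_{\beta}|$ and sub-distributivity give
\[
|b|\wedge f_{\alpha}\le |c_{\beta}|\wedge f_{\alpha}+|b-c_{\beta}|=:h_{\alpha\beta}.
\]
By the first equality, $|c_{\beta}|\wedge f_{\alpha}\xrightarrow[\alpha]{\eta}0_{F}$ for every $\beta$, so $h_{\alpha\beta}\xrightarrow[\alpha]{\eta}|b-c_{\beta}|=:g_{\beta}$, and $g_{\beta}\xrightarrow[\beta]{\eta}0_{F}$ by solidity. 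Proposition \ref{prod}(ii) then gives $|b|\wedge f_{\alpha}\xrightarrow{\eta\eta}0_{F}$, and idempotency $\eta^{2}=\eta$ closes the proof.

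The main obstacle is the last step: without the composition framework and idempotency, the approximation $|c_{\beta}|\wedge f_{\alpha}$ depends on both indices simultaneously, and the error term $|b-c_{\beta}|$ is not $\eta$-null in $\alpha$. The ``second-order'' limiting supplied by $\eta\eta$ is precisely what bridges this, which clarifies why the statement for $\overline{I(A)}$ truly needs the idempotency assumption, whereas the statement for $I(A)$ does not.
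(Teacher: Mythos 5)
The paper offers no proof of this proposition---it is imported from \cite[Proposition 3.1]{erz}---so there is nothing internal to compare against; judged on its own, your argument is essentially correct and self-contained. Part (i) and the first equality of (ii) are handled exactly as one should: the sub-distributivity $(f+g)\wedge z\le f\wedge z+g\wedge z$ and $Na\wedge z\le Na\wedge Nz=N(a\wedge z)$ are precisely the inequalities needed. For the second equality of (ii), your use of Proposition \ref{prod}(ii) to absorb the two-index majorant $h_{\alpha\beta}$ and then collapse $\eta\eta$ to $\eta$ by idempotency is a clean route, and it matches the style of arguments this paper uses elsewhere (compare the second implication in Proposition \ref{unu}, or the proof that the ideal of linearity is closed). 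The alternative in the literature is to invoke \cite[Proposition 2.4]{erz}, that $H:=\{h\in F:\ |h|\wedge f_{\alpha}\xrightarrow{\eta}0_{F}\}$ is an $\eta$-closed ideal containing $A$ and hence $\overline{I(A)}$; your computation is in effect a direct proof of that closedness via composition.

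One small gap to close: the statement concerns the closure $\overline{I(A)}_{\eta}$, but your argument only reaches elements $b$ of the adherence $\overline{I(A)}^{1}_{\eta}$, since you represent $b$ as the limit of a net $(c_{\beta})$ taken from $I(A)$ itself; a general element of the closure need not admit such a net. The fix is either to iterate your argument (the adherence of an ideal is again an ideal, so the equality $\mathrm{u}_{A}\eta=\mathrm{u}_{\overline{I(A)}^{1}_{\eta}}\eta$ can be reapplied transfinitely), or---more simply---to quote the fact recorded in this paper as a consequence of Theorem \ref{product}: for an idempotent convergence the adherence of an ideal is already closed, so $\overline{I(A)}^{1}_{\eta}=\overline{I(A)}_{\eta}$ and your argument suffices as written. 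A cosmetic point: the quasi-subnet axiom in part (i) transfers because $a\wedge\cdot$ carries quasi-subnets to quasi-subnets, not via the domination inequality you cite for solidity.
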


It follows that the correspondence $A,\eta\mapsto \mathrm{u}_{A}\eta$ is very far from being injective. In this section we revisit some results in the literature which either relax the definition of the unbounded modification, or explore the limitations of such a relaxation. In particular, some results available for locally solid topologies are true for idempotent locally solid additive convergences.

\begin{proposition}[cf. \cite{taylor}, Theorem 9.6]
If $\eta$ is an order continuous idempotent locally solid convergence and $E$ is an order dense ideal, then $\mathrm{u}_{E}\eta=\mathrm{u}\eta$.
\end{proposition}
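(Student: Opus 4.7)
The plan is to reduce the problem to showing that $E$ is $\eta$-dense in $F$ and then to invoke Proposition~\ref{unba}(ii). The inequality $\mathrm{u}_{E}\eta\le \mathrm{u}\eta$ is immediate from the definition, since the condition $|a|\wedge f_{\alpha}\xrightarrow{\eta}0_{F}$ for every $a\in F$ is stronger than the same condition restricted to $a\in E$. So the task is to establish the reverse inequality.

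My next step will be to verify that $\overline{E}^{1}_{\eta}=F$. Fix $f\in F_{+}$. Order density of $E$ guarantees that $G:=E_{+}\cap\left[0_{F},f\right]$ is directed upward and $\sup G=f$; equivalently, $f-e\downarrow 0_{F}$ as $e$ ranges over $G$. Since $\eta$ is order continuous, $f-e\xrightarrow{\eta}0_{F}$, so $f\in \overline{E_{+}}^{1}_{\eta}\subset \overline{E}^{1}_{\eta}$. Because the adherence of an ideal is an ideal (by the result cited from \cite{vw} in the preliminaries), the set $\overline{E}^{1}_{\eta}$ contains $F_{+}$, and therefore equals $F$.

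The conclusion is then packaged by Proposition~\ref{unba}(ii) applied to $A=E$: since $\eta$ is idempotent and $E=I(E)$, one has $\mathrm{u}_{E}\eta=\mathrm{u}_{\overline{E}}\eta$, where $\overline{E}$ denotes the $\eta$-closure of $E$. By the observation following Proposition~\ref{coide}, for an idempotent convergence the adherence of an ideal coincides with its closure, so $\overline{E}=\overline{E}^{1}_{\eta}=F$. Hence $\mathrm{u}_{E}\eta=\mathrm{u}_{F}\eta=\mathrm{u}\eta$, as desired.

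There is no genuinely hard step; the main point to keep straight is that all three hypotheses feed in at distinct places. Order density supplies the approximating increasing net in $E_{+}$, order continuity promotes its order convergence to $\eta$-convergence (so that $f\in \overline{E}^{1}_{\eta}$), and idempotency is what makes Proposition~\ref{unba}(ii) usable with the adherence $\overline{E}^{1}_{\eta}$, since adherence and closure agree for ideals in that setting.
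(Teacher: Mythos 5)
Your proof is correct and follows essentially the same route as the paper: show that order density plus order continuity force $\overline{E}_{\eta}=F$, then invoke Proposition~\ref{unba}(ii) with idempotency to get $\mathrm{u}_{E}\eta=\mathrm{u}_{\overline{E}_{\eta}}\eta=\mathrm{u}\eta$. You merely spell out the density step (via $E_{+}\cap\left[0_{F},f\right]\uparrow f$) that the paper dismisses as easy, and your appeal to ``adherence equals closure for idempotent convergences'' is harmless but unnecessary, since the adherence being all of $F$ already forces the closure to be all of $F$.
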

\begin{proof}
It is easy to see that $E$ is dense with respect to order convergence, hence $\mathrm{u}_{E}\eta=\mathrm{u}_{\overline{E}_{\eta}}\eta=\mathrm{u}_{F}\eta=\mathrm{u}\eta$.
\end{proof}

Next, let us somewhat improve \cite[Proposition 3.3]{erz} and clean up its proof. The following lemmas are contained there already.

\begin{lemma}\label{l1}
Let $\eta$ be an additive locally solid additive convergence on $F$. If $\left(f_{\alpha}\right)_{\alpha\in A}\subset F$ and $e,f\in F$ are such that $f_{\alpha}\wedge e\xrightarrow[]{\eta} f\wedge e$, then $\left(e-f\right)^{+}\wedge \left|f_{\alpha}-f\right|\xrightarrow[]{\eta}0_{F}$.
\end{lemma}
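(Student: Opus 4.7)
The plan is to dominate $(e-f)^{+}\wedge\left|f_{\alpha}-f\right|$ pointwise by the net $\left|f_{\alpha}\wedge e-f\wedge e\right|$, which by hypothesis $\eta$-converges to $0_{F}$, and then invoke the solidity bullet of Theorem \ref{locs}. In this way the lemma reduces to verifying the purely lattice-theoretic inequality
$$(e-f)^{+}\wedge\left|f_{\alpha}-f\right|\leq\left|f_{\alpha}\wedge e-f\wedge e\right|.\qquad(\star)$$

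To establish $(\star)$ I would first split $\left|f_{\alpha}-f\right|=(f_{\alpha}-f)^{+}+(f-f_{\alpha})^{+}$ and apply the sub-additivity $x\wedge(y+z)\leq x\wedge y+x\wedge z$ for $x,y,z\geq 0_{F}$, to bound the left-hand side of $(\star)$ by
$$\bigl((e-f)^{+}\wedge(f_{\alpha}-f)^{+}\bigr)+\bigl((e-f)^{+}\wedge(f-f_{\alpha})^{+}\bigr).$$
The first summand is handled by the standard identity $(x-y)^{+}\wedge(z-y)^{+}=(x\wedge z-y)^{+}$ (a consequence of the distributive law $(x\vee y)\wedge(z\vee y)=(x\wedge z)\vee y$): it equals $(e\wedge f_{\alpha}-f)^{+}$, and an application of the same identity after intersecting with $e$ rewrites this as $(f_{\alpha}\wedge e-f\wedge e)^{+}$.

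The second summand is where I expect the main difficulty, since the asymmetric placement of $f$ inside $(f-f_{\alpha})^{+}$ prevents a direct use of the same identity. The approach is to combine $x^{+}\wedge y^{+}=(x\wedge y)^{+}$ with the general inequality $x\wedge y\leq(x+y)^{+}$ (itself a consequence of $x\vee y\geq\tfrac{1}{2}(x+y)$) to deduce
$$(e-f)^{+}\wedge(f-f_{\alpha})^{+}\leq\bigl((e-f)+(f-f_{\alpha})\bigr)^{+}=(e-f_{\alpha})^{+}.$$
Since this same quantity is trivially also dominated by $(f-f_{\alpha})^{+}$, it is bounded by $(e-f_{\alpha})^{+}\wedge(f-f_{\alpha})^{+}$, which by the first identity equals $(e\wedge f-f_{\alpha})^{+}=(f\wedge e-f_{\alpha}\wedge e)^{+}$. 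Adding the bounds for the two summands yields $(f_{\alpha}\wedge e-f\wedge e)^{+}+(f\wedge e-f_{\alpha}\wedge e)^{+}=\left|f_{\alpha}\wedge e-f\wedge e\right|$, which is $(\star)$, and the second bullet of Theorem \ref{locs} completes the proof.
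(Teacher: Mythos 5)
Your proof is correct: the lemma does reduce to the pointwise inequality $(e-f)^{+}\wedge\left|f_{\alpha}-f\right|\le\left|f_{\alpha}\wedge e-f\wedge e\right|$ together with local solidity, and each lattice step you use (the decomposition of $\left|f_{\alpha}-f\right|$, subadditivity of $\wedge$ on positive elements, $(x-y)^{+}\wedge(z-y)^{+}=(x\wedge z-y)^{+}$, $x^{+}\wedge y^{+}=(x\wedge y)^{+}\le(x+y)^{+}$, and $(a-f)^{+}=(a-f\wedge e)^{+}$ for $a\le e$) is a valid identity or inequality of abelian $\ell$-groups, verifiable pointwise in $\R$. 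The paper itself gives no proof of this lemma, deferring to \cite[Proposition 3.3]{erz}, so your argument supplies exactly the kind of domination-by-$\left|f_{\alpha}\wedge e-f\wedge e\right|$ computation that source uses; nothing is missing.
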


\begin{lemma}\label{l12}
For every $e,f\in F_{+}$ we have $e\wedge\left(ne-f\right)^{+} \xrightarrow[]{\|\cdot\|_{f}}e$ (and so $e\wedge\left(ne-f\right)^{+} \xrightarrow[]{\mathrm{ru}}e$).
\end{lemma}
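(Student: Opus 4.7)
The goal is to show $e\wedge (ne-f)^+\to e$ in the norm $\|\cdot\|_f$ on the principal ideal $F_f$, which by definition of $\|\cdot\|_f$ amounts to producing a scalar bound $\alpha_n\to 0$ with $|e-e\wedge(ne-f)^+|\le \alpha_n f$. The natural guess, confirmed by pointwise intuition in a function lattice, is $\alpha_n=\frac{1}{n}$; equivalently, I will prove the purely lattice-theoretic inequality
$$n\bigl(e-e\wedge(ne-f)^+\bigr)\le f.$$

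The first step is to rewrite the left-hand side using the standard identity $a-a\wedge b=(a-b)^+$. Multiplying the expression $e-e\wedge(ne-f)^+$ by $n$ and using $n(x\wedge y)=nx\wedge ny$, one gets
$$n\bigl(e-e\wedge(ne-f)^+\bigr)=ne-ne\wedge n(ne-f)^+=\bigl(ne-n(ne-f)^+\bigr)^+.$$
This reduces the task to bounding $\bigl(ne-n(ne-f)^+\bigr)^+$ by $f$.

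The second step is a very small estimate on the positive part: since $x^+\ge x$ and $x^+\ge 0$, for any integer $n\ge 1$ we have $nx^+=x^++(n-1)x^+\ge x$, applied to $x=ne-f$. This yields $n(ne-f)^+\ge ne-f$, hence $ne-n(ne-f)^+\le ne-(ne-f)=f$. Taking the positive part (which is monotone) and using $f^+=f$ gives $\bigl(ne-n(ne-f)^+\bigr)^+\le f$, as required.

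Combining the two steps gives $n(e-e\wedge(ne-f)^+)\le f$, so $\|e-e\wedge(ne-f)^+\|_f\le \frac{1}{n}\to 0$, establishing convergence in $\|\cdot\|_f$; since $\mathrm{ru}\le\|\cdot\|_f$ (by Example \ref{lru}), the parenthetical statement follows. There is no substantive obstacle here — the only mild subtlety is resisting the temptation to reason pointwise and instead keeping to the two identities $a-a\wedge b=(a-b)^+$ and $nx^+\ge x$, which together do all the work.
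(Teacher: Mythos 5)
Your proof is correct: the identity $a-a\wedge b=(a-b)^+$, the scalar-homogeneity of $\wedge$ and $(\cdot)^+$, and the estimate $n(ne-f)^+\ge ne-f$ are all applied validly, and they do yield $0\le e-e\wedge(ne-f)^+\le\frac{1}{n}f$, hence $\|\cdot\|_f$-convergence (and $\mathrm{ru}$-convergence, since $\mathrm{ru}\le\|\cdot\|_f$). The paper itself gives no proof here — it defers to \cite[Proposition 3.3]{erz} — so there is nothing to compare against, but your argument is exactly the expected explicit quantitative one.
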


\begin{proposition}
Let $\eta$ be an idempotent linear locally solid convergence on $F$, let $E\subset F$ be an ideal, and let $G\subset E_{+}$ be such that $\overline{I\left(G\right)}_{\eta}=E$. Then, $0_{F}\le f_{\alpha}\xrightarrow[]{\mathrm{u}_{E}\eta} f\in F_{+}$ iff $f_{\alpha}\wedge ng \xrightarrow[]{\eta} f\wedge ng$, for every $g\in G$ and $n\in\N$.
\end{proposition}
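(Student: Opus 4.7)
The proof splits into two directions. The forward direction is routine: by definition of $\mathrm{u}_E\eta$, the assumption $f_\alpha \xrightarrow{\mathrm{u}_E\eta} f$ gives $|a| \wedge |f_\alpha - f| \xrightarrow{\eta} 0_F$ for every $a \in E$. Applied to $a = ng$ (which lies in $E$ because $G \subset E_+$ and $E$ is an ideal), combined with the standard lattice inequality $|f_\alpha \wedge ng - f \wedge ng| \le ng \wedge |f_\alpha - f|$ (valid since $f_\alpha, f \ge 0_F$), this delivers $f_\alpha \wedge ng \xrightarrow{\eta} f \wedge ng$.

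For the converse, the first task is to reduce from the whole ideal $E$ to the generating set $G$. This is the first use of idempotency: by Proposition~\ref{unba}(ii),
$$\mathrm{u}_G\eta \;=\; \mathrm{u}_{I(G)}\eta \;=\; \mathrm{u}_{\overline{I(G)}_\eta}\eta \;=\; \mathrm{u}_E\eta,$$
so it suffices to prove $g \wedge |f_\alpha - f| \xrightarrow{\eta} 0_F$ for a fixed but arbitrary $g \in G$.

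The core argument is to package the hypothesis into a statement in the composition $\eta\eta$. Set $c_n := (ng - f)^+$ for $n \in \mathbb{N}$. Lemma~\ref{l1} with $e = ng$ gives $c_n \wedge |f_\alpha - f| \xrightarrow{\alpha, \eta} 0_F$ for each fixed $n$. Meanwhile, Lemma~\ref{l12} yields $g \wedge c_n \xrightarrow{n, \mathrm{ru}} g$, hence $(g - c_n)^+ = g - g \wedge c_n \xrightarrow{n, \mathrm{ru}} 0_F$, which is also $\eta$-null because $\mathrm{ru}$ is the strongest linear locally solid convergence. Using the identity $g = g \wedge c_n + (g - c_n)^+$ together with subadditivity of $x \mapsto x \wedge |f_\alpha - f|$ on $F_+$ yields the elementary estimate
$$\bigl(g \wedge |f_\alpha - f| - (g - c_n)^+\bigr)^+ \;\le\; c_n \wedge |f_\alpha - f|,$$
which is precisely the definition of $g \wedge |f_\alpha - f| \xrightarrow{\eta\eta} 0_F$ with controlling net $\bigl((g - c_n)^+\bigr)_{n \in \mathbb{N}}$. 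Idempotency $\eta^2 = \eta$ finishes the proof.

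The main subtlety lies in finding the correct two-parameter decomposition: the hypothesis only provides $\alpha$-convergence after truncating by the tail cut-off $c_n$, which itself must be driven to zero in $n$. Recognizing that composition of convergences is the natural language for this, and using idempotency both to identify $\mathrm{u}_G\eta$ with $\mathrm{u}_E\eta$ and to absorb the resulting extra factor of $\eta$, is the heart of the argument. The remaining computations are purely lattice-theoretic.
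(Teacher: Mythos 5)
Your proof is correct. The forward direction is the same as the paper's (the paper phrases it via continuity of the lattice operations plus the fact that $\mathrm{u}_{E}\eta$ agrees with $\eta$ on $\left[0_{F},ng\right]$, you phrase it via the inequality $\left|f_{\alpha}\wedge ng-f\wedge ng\right|\le ng\wedge\left|f_{\alpha}-f\right|$ and local solidness; these are interchangeable). For the converse you use the same two lattice-theoretic ingredients as the paper, namely Lemma~\ref{l1} to put $\left(ng-f\right)^{+}\wedge\left|f_{\alpha}-f\right|$ to zero and Lemma~\ref{l12} to drive $\left(g-\left(ng-f\right)^{+}\right)^{+}$ to zero relatively uniformly, but you assemble them differently. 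The paper works with the ideal $H=\left\{h:\left|h\right|\wedge\left|f_{\alpha}-f\right|\xrightarrow{\eta}0_{F}\right\}$, invokes its $\eta$-closedness (from \cite[Proposition 2.4]{erz}, where the idempotency is hidden), deduces $g\in H$ from relative uniform closedness of $H$, and then gets all of $E=\overline{I\left(G\right)}_{\eta}\subset H$ from $\eta$-closedness. You instead reduce from $E$ to $G$ up front via Proposition~\ref{unba}(ii) (first use of idempotency), and then prove $g\wedge\left|f_{\alpha}-f\right|\xrightarrow{\eta}0_{F}$ by exhibiting an explicit controlling sequence $\left(\left(g-\left(ng-f\right)^{+}\right)^{+}\right)_{n\in\N}$ witnessing $\eta^{2}$-convergence and absorbing the extra factor by $\eta^{2}=\eta$ (second use). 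Your subadditivity estimate $\left(g\wedge\left|f_{\alpha}-f\right|-\left(g-c_{n}\right)^{+}\right)^{+}\le c_{n}\wedge\left|f_{\alpha}-f\right|$ is valid, and the controlling net is $\eta$-null because $\eta\le\mathrm{ru}$ for linear locally solid $\eta$. What your route buys is that it makes the role of composition and idempotency completely explicit and avoids appealing to the closedness of $H$; what it costs is the appeal to Proposition~\ref{unba}(ii), whose proof rests on similar machinery, so the two arguments are ultimately cousins rather than strangers.
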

\begin{proof}
Necessity: If $f_{\alpha}\xrightarrow[]{\mathrm{u}_{E}\eta} f$, then by continuity of the lattice operations we have $f_{\alpha}\wedge ng \xrightarrow[]{\mathrm{u}_{E}\eta} f\wedge ng$, for every $g\in G$ and $n\in\N$. It is left to recall that $\mathrm{u}_{E}\eta$ agrees with $\eta$ on each of $\left[0_{F},ng\right]$.\medskip

Sufficiency: According to \cite[Proposition 2.4]{erz} the set\linebreak $H:=\left\{h\in F:~ \left|h\right|\wedge \left|f_{\alpha}-f\right|\xrightarrow[]{\eta} 0_{F}\right\}$ is a $\eta$-closed ideal. By Lemma \ref{l1} this ideal contains $\left(ng-f\right)^{+}$, for every $g\in G$ and $n\in\N$. Since $\eta$ is linear, it is weaker than relative uniform convergence  (see \cite[Proposition 5.1]{ectv}), and so $H$ is relatively uniformly closed. As $H\ni g\wedge\left(ng-f\right)^{+} \xrightarrow[]{\mathrm{ru}}g$, it follows that $g\in H$, for every $g\in G$. Thus, $E=\overline{I\left(G\right)}_{\eta}\subset H$ and so $f_{\alpha}\xrightarrow[]{\mathrm{u}_{E}\eta} f$.
\end{proof}

\begin{corollary}
If $\eta$ is linear idempotent and $h\in F_{+}$ is a topological unit (i.e. $\overline{F_{h}}_{\eta}=F$), then for a net $\left(f_{\alpha}\right)_{\alpha\in A}\subset F_{+}$ we have $f_{\alpha}\xrightarrow[]{\mathrm{u}\eta} f\in F_{+}$ iff $f_{\alpha}\wedge nh \xrightarrow[]{\eta} f\wedge nh$, for every $n\in\N$.
\end{corollary}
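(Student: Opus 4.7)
The corollary is a direct specialization of the preceding proposition, so the plan is essentially to check that the hypotheses match when we plug in the appropriate data. I would take $E := F$ and $G := \{h\}$. With these choices, the ideal generated by $G$ is $I(G) = F_h$, and the hypothesis $\overline{I(G)}_\eta = E$ of the proposition becomes exactly $\overline{F_h}_\eta = F$, which is the topological unit assumption. Since $\mathrm{u}\eta = \mathrm{u}_F\eta$ by definition, the modification appearing in the proposition specializes to $\mathrm{u}\eta$.

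The quantifier over $g \in G$ collapses to the single element $h$, and the condition $f_\alpha \wedge ng \xrightarrow[]{\eta} f \wedge ng$ for all $g \in G$ and $n \in \N$ becomes $f_\alpha \wedge nh \xrightarrow[]{\eta} f \wedge nh$ for every $n \in \N$. Applying the preceding proposition yields the stated equivalence.

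I do not foresee any obstacle here; the proof is a one-line invocation. One minor bookkeeping point is to make sure $h \in F_+$ lies in $E_+ = F_+$, which is trivial, and that $\eta$ is indeed both linear and idempotent as required by the proposition, which is exactly the standing assumption of the corollary.
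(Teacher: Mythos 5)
Your proposal is correct and is exactly the intended derivation: the paper states this as an immediate corollary of the preceding proposition, obtained by taking $E=F$ and $G=\left\{h\right\}$, so that $I\left(G\right)=F_{h}$ and $\mathrm{u}_{E}\eta=\mathrm{u}\eta$. Nothing further is needed.
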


We will now generalize some results from \cite[Section 3]{kt}, where the opposite problem is considered. Namely, what can be said about $\eta,\theta,E,H$ if $\mathrm{u}_{E}\theta\ge \mathrm{u}_{H}\eta$? Let us start with a fact similar to Lemma \ref{l12} (the proof is contained in the proof of \cite[Lemma 11.4]{ectv}).

\begin{lemma}\label{unu1}For every $e,f\in F_{+}$ we have $f\wedge ne\xrightarrow[]{\mathrm{u}_{e}\|\cdot\|_{f}}f$, and so $f\wedge ne\xrightarrow[]{\mathrm{u}_{e}\mathrm{ru}}f$.
\end{lemma}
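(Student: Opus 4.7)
The plan is to unpack the definitions and reduce the statement to a single lattice inequality that can be verified in $\R$ and transferred to $F$.

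By the definition of the unbounded modification $\mathrm{u}_e$, the claim $f\wedge ne\xrightarrow[]{\mathrm{u}_e\|\cdot\|_f}f$ is equivalent to $(f-ne)^+=f-f\wedge ne\xrightarrow[]{\mathrm{u}_e\|\cdot\|_f}0_F$, which in turn amounts to $h_n:=e\wedge(f-ne)^+\xrightarrow[]{\|\cdot\|_f}0_F$. Since $0_F\le h_n\le e\wedge f\le f$, the sequence $(h_n)$ sits in the principal ideal $F_f$, so what remains is an estimate on $\|h_n\|_f$.

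The crux is the inequality $nh_n\le f$, i.e.\ $ne\wedge n(f-ne)^+\le f$, which at once yields $\|h_n\|_f\le 1/n\to 0$. This is the positive-homogeneous vector-lattice inequality
\[ a\wedge n(b-a)^+\le b\qquad (a,b\in F_+,\ n\in\N), \]
which I would prove by the standard transfer principle: any identity or inequality built from $+,-,\vee,\wedge$ on positive variables holds in every vector lattice as soon as it holds in $\R$. Pointwise in $\R$, there is a short case split: if $b\le a$ the left side vanishes; if $b>a$ and $n(b-a)>a$ it equals $a<b$; otherwise it equals $n(b-a)$, and the estimate $b\le\tfrac{n+1}{n}a\le\tfrac{n}{n-1}a$ (the second inequality being $(n+1)(n-1)\le n^2$) delivers $n(b-a)\le b$.

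The second claim $f\wedge ne\xrightarrow[]{\mathrm{u}_e\mathrm{ru}}f$ is automatic: $\mathrm{ru}=\bigwedge_{g\in F_+}\|\cdot\|_g\le\|\cdot\|_f$ by Example \ref{lru}, and $\mathrm{u}_e$ is monotone in its convergence argument (immediate from the definition, or from Proposition \ref{unba}(i)), so $\mathrm{u}_e\|\cdot\|_f\ge\mathrm{u}_e\mathrm{ru}$ and the first convergence passes to the second. The main obstacle is just the pointwise lattice inequality, whose middle case is the only spot where an arithmetic estimate is needed; avoiding the transfer principle is possible — e.g.\ via the identity $(f-ne)^++ne=f\vee ne$ and a band decomposition — but strictly longer.
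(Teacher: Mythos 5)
Your proof is correct and takes the standard route: the paper itself gives no argument but defers to \cite[Lemma 11.4]{ectv}, whose proof rests on exactly the lattice inequality $n\bigl(e\wedge(f-ne)^{+}\bigr)\le f$ that you isolate, verify pointwise in $\R$, and transfer. The only cosmetic gap is that $\tfrac{n}{n-1}$ is undefined for $n=1$, where your middle case reduces to $b-a\le b$ and is trivial since $a\ge 0$.
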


\begin{proposition}\label{unu}Let $E\subset F$ be an ideal, and let $f\in F_{+}$. Then $\mathrm{u}_{E}\|\cdot\|_{f}\ge \mathrm{u}_{f}\eta$ $\Rightarrow$ $f\in\overline{E}^{1}_{\eta}$ $\Rightarrow$ $\mathrm{u}_{E}\|\cdot\|_{f}\ge \mathrm{u}_{f}\left(\|\cdot\|_{f}\eta\right)$. In particular, if $\eta$ is idempotent and linear, then $\mathrm{u}_{E}\|\cdot\|_{f}\ge \mathrm{u}_{f}\eta$ $\Leftrightarrow$ $f\in\overline{E}^{1}_{\eta}=\overline{E}_{\eta}$.
\end{proposition}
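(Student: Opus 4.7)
The plan is to prove each implication separately, and then assemble the biconditional (under the idempotent/linear hypothesis) with a monotonicity argument enabled by idempotency.

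\textbf{First implication.} Assume $\mathrm{u}_{E}\|\cdot\|_{f}\ge \mathrm{u}_{f}\eta$. To produce a $\eta$-approximation of $f$ from inside $E$, I would consider the increasing net $(f\wedge ne)_{(n,e)\in \N\times E_{+}}$ (directed by the product order), which sits in $E_{+}\cap[0_{F},f]$. The plan is to verify that the associated decreasing net $\bigl((f-ne)^{+}\bigr)_{(n,e)}$ is $\mathrm{u}_{E}\|\cdot\|_{f}$-null: for arbitrary $a\in E_{+}$, restricting to the cofinal tail $\{(n,e):e\ge a\}$, monotonicity gives $a\wedge (f-ne)^{+}\le a\wedge (f-na)^{+}$, and Lemma \ref{unu1} delivers $a\wedge (f-na)^{+}\xrightarrow[n]{\|\cdot\|_{f}} 0_{F}$. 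The hypothesis then yields $(f-ne)^{+}\xrightarrow{\mathrm{u}_{f}\eta}0_{F}$, and since these elements lie in $[0_{F},f]$, this simplifies to $(f-ne)^{+}\xrightarrow{\eta}0_{F}$, i.e.\ the increasing net $(f\wedge ne)\subset E_{+}\cap[0_{F},f]$ $\eta$-converges to $f$. Lemma \ref{psola} concludes $f\in\overline{E_{+}}^{1}_{\eta}\subset\overline{E}^{1}_{\eta}$.

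\textbf{Second implication.} Now assume $f\in\overline{E}^{1}_{\eta}$. By Corollary \ref{psola1}, the directed net $(e_{\beta})_{\beta}:=E_{+}\cap[0_{F},f]$ $\eta$-converges to $f$, so $g_{\beta}:=f-e_{\beta}\xrightarrow{\eta}0_{F}$ is a decreasing $\eta$-null net in $F_{+}$. Given a $\mathrm{u}_{E}\|\cdot\|_{f}$-null net $(f_{\alpha})\subset F_{+}$, the key step is the lattice inequality
\[
f\wedge f_{\alpha}=f_{\alpha}\wedge (e_{\beta}+g_{\beta})\le (f_{\alpha}\wedge e_{\beta})+g_{\beta},
\]
valid because $g_{\beta}\ge 0_{F}$. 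Setting $h_{\alpha\beta}:=(f_{\alpha}\wedge e_{\beta})+g_{\beta}$, for each fixed $\beta$ we have $f_{\alpha}\wedge e_{\beta}\xrightarrow[\alpha]{\|\cdot\|_{f}}0_{F}$ (since $e_{\beta}\in E$), hence $f\wedge f_{\alpha}\le h_{\alpha\beta}\xrightarrow[\alpha]{\|\cdot\|_{f}}g_{\beta}\xrightarrow[\beta]{\eta}0_{F}$. Proposition \ref{prod}(ii) then delivers $f\wedge f_{\alpha}\xrightarrow{\|\cdot\|_{f}\eta}0_{F}$, which is precisely $f_{\alpha}\xrightarrow{\mathrm{u}_{f}(\|\cdot\|_{f}\eta)}0_{F}$.

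\textbf{Biconditional.} When $\eta$ is idempotent and linear, \cite[Proposition 5.1]{ectv} gives $\eta\le\mathrm{ru}\le\|\cdot\|_{f}$, so monotonicity of composition (Corollary \ref{mon}(i)) combined with idempotency of $\eta$ yields $\|\cdot\|_{f}\eta\ge\eta\eta=\eta$; monotonicity of $\mathrm{u}_{f}$ then upgrades the second implication to $f\in\overline{E}^{1}_{\eta}\Rightarrow \mathrm{u}_{E}\|\cdot\|_{f}\ge\mathrm{u}_{f}\eta$. The equality $\overline{E}^{1}_{\eta}=\overline{E}_{\eta}$ is the observation made immediately after Theorem \ref{product} that adherences of ideals under idempotent convergences are automatically closed. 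The main obstacle I expect is the lattice inequality in the second implication (standard once spotted), and carefully managing the cofinality bookkeeping in the first step, where $a\in E_{+}$ ranges over \emph{all} of $E_{+}$ rather than just the $e$'s already appearing in the net. Everything else is routine translation through the Proposition \ref{prod}(ii) criterion.
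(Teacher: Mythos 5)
Your proposal is correct and follows essentially the same route as the paper: Lemma \ref{unu1} plus the extremal property of $\mathrm{u}_{f}\eta$ on $\left[0_{F},f\right]$ for the first implication, the inequality $f\wedge f_{\alpha}\le \left(f_{\alpha}\wedge e\right)+\left(f-e\right)$ together with Corollary \ref{psola1} and Proposition \ref{prod} for the second, and $\|\cdot\|_{f}\eta\ge\eta^{2}=\eta$ for the biconditional. The only cosmetic difference is that in the first step you run the doubly-indexed net $\left(f\wedge ne\right)_{\left(n,e\right)}$ directly, where the paper instead upgrades the sequence $\left(f\wedge ne\right)_{n}$ to the full increasing net $\left[0_{F},f\right]\cap E$ via Lemma \ref{mono}.
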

\begin{proof}
Let us prove the first implication. Fix $e\in E_{+}$. The set $\left[0_{F},f\right]\cap E$ is an increasing net which contains $\left(f\wedge ne\right)_{n\in\N}$. By Lemma \ref{unu1} the latter sequence $\mathrm{u}_{e}\|\cdot\|_{f}$-converges to $f$, and so $\left[0_{F},f\right]\cap E\xrightarrow[]{\mathrm{u}_{e}\|\cdot\|_{f}}f$, according to Lemma \ref{mono}. Since $e$ was arbitrary, we conclude that $\left[0_{F},f\right]\cap E\xrightarrow[]{\mathrm{u}_{E}\|\cdot\|_{f}}f$.

As $u_{E}\|\cdot\|_{f}\ge u_{f}\eta$, it follows that $\left[0_{F},f\right]\cap E\xrightarrow[]{\mathrm{u}_{f}\eta}f$. By part (i) of Proposition \ref{unba} $\mathrm{u}_{f}\eta$ coincides with $\eta$ on $\left[0_{F},f\right]$, and so $\left[0_{F},f\right]\cap E\xrightarrow[]{\eta}f$. Thus, $f\in\overline{E}^{1}_{\eta}$.\medskip

We now prove the second implication. Assume that $0_{F}\le f_{\alpha}\xrightarrow[]{\mathrm{u}_{E}\|\cdot\|_{f}}0_{F}$, then for every $e\in \left[0_{F},f\right]\cap E$ we have $e\wedge f_{\alpha}\xrightarrow[]{\|\cdot\|_{f}}0_{F}$. It follows that $f\wedge f_{\alpha}\le e\wedge f_{\alpha} + f-e\xrightarrow[\alpha]{\|\cdot\|_{f}}f-e\xrightarrow[{e\in \left[0_{F},f\right]\cap E}]{\eta}0_{F}$, where the second convergence follows from Corollary \ref{psola1}. Thus, by Proposition \ref{prod} we conclude that $f_{\alpha}\xrightarrow[]{\mathrm{u}_{f}\left(\|\cdot\|_{f}\eta\right)}0_{F}$.\medskip

For the last claim, observe that if $\eta$ is idempotent and linear, we have $\|\cdot\|_{f}\eta\ge \eta^{2}=\eta$.
\end{proof}

\begin{theorem}\label{unbc}
Let $\eta,\theta$ be locally solid additive convergences on $F$ and let $E,H\subset F$ be ideals. Then:
\item[(i)] If $\theta$ is linear and $\mathrm{u}_{E}\theta\ge \mathrm{u}_{H}\eta$, then $H\subset \overline{E}^{1}_{\eta}$ and $\left.\theta\right|_{\left[0_{F},e\right]}\ge \left.\eta\right|_{\left[0_{F},e\right]}$, for every $e\in E\cap H$.
\item[(ii)] If $\eta$ is idempotent, then $\mathrm{u}_{E}\theta\ge \mathrm{u}_{H}\eta$ whenever $H\subset \overline{E}^{1}_{\eta}=\overline{E}_{\eta}$ and $\left.\theta\right|_{\left[0_{F},e\right]}\ge \left.\eta\right|_{\left[0_{F},e\right]}$, for every $e\in E\cap H$.
\end{theorem}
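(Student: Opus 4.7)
The plan is to handle (i) and (ii) separately, with (i) reducing to Proposition~\ref{unu} and (ii) to the idempotency $\eta^{2}=\eta$ through a composition witness.

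For (i), the inequality claim is immediate: for $e\in E\cap H$, Proposition~\ref{unba}(i) says $\mathrm{u}_{E}\theta$ agrees with $\theta$ on $\left[0_{F},\left|e\right|\right]$ and $\mathrm{u}_{H}\eta$ agrees with $\eta$ on $\left[0_{F},\left|e\right|\right]$, so $\mathrm{u}_{E}\theta\ge\mathrm{u}_{H}\eta$ localizes to $\left.\theta\right|_{\left[0_{F},\left|e\right|\right]}\ge\left.\eta\right|_{\left[0_{F},\left|e\right|\right]}$. For the inclusion $H\subset\overline{E}^{1}_{\eta}$, I would fix $h\in H_{+}$ and exploit linearity of $\theta$: by \cite[Proposition 5.1]{ectv}, $\theta\le\mathrm{ru}\le\|\cdot\|_{h}$, and hence $\mathrm{u}_{E}\|\cdot\|_{h}\ge\mathrm{u}_{E}\theta\ge\mathrm{u}_{H}\eta\ge\mathrm{u}_{h}\eta$. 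Proposition~\ref{unu} then yields $h\in\overline{E}^{1}_{\eta}$; solidity of this adherence extends the conclusion to all of $H$.

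For (ii), I would fix $h\in H_{+}$ and a net $\left(f_{\alpha}\right)_{\alpha\in A}\subset F_{+}$ with $f_{\alpha}\xrightarrow[]{\mathrm{u}_{E}\theta}0_{F}$, and aim to show $h\wedge f_{\alpha}\xrightarrow[]{\eta}0_{F}$. The centerpiece is the elementary estimate $h\wedge f_{\alpha}\le e\wedge f_{\alpha}+(h-e)$ for every $e\in\left[0_{F},h\right]\cap E$. Using $h\in\overline{E}^{1}_{\eta}\cap F_{+}=\overline{E_{+}}^{1}_{\eta}$, Corollary~\ref{psola1} produces the $\eta$-null decreasing net $\left(h-e\right)_{e\in\left[0_{F},h\right]\cap E}$, which will serve as the controlling net. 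For each such $e$, membership in $E\cap H$ (since $H$ is an ideal containing $h$) combined with the second hypothesis upgrades $e\wedge f_{\alpha}\xrightarrow[\alpha]{\theta}0_{F}$ (automatic from $\mathrm{u}_{E}\theta$-nullity and $e\in E$) to $e\wedge f_{\alpha}\xrightarrow[\alpha]{\eta}0_{F}$. The triangle estimate then gives $\left(h\wedge f_{\alpha}-(h-e)\right)^{+}\le e\wedge f_{\alpha}\xrightarrow[\alpha]{\eta}0_{F}$, exhibiting $h\wedge f_{\alpha}\xrightarrow[]{\eta^{2}}0_{F}$, and idempotency closes the argument.

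The only subtle point is spotting that $h\wedge f_{\alpha}\le e\wedge f_{\alpha}+(h-e)$ is exactly a composition witness with controlling net $\left(h-e\right)_{e}$; once that is recognized, the entire argument reduces to chasing definitions through Proposition~\ref{unba}, Proposition~\ref{unu}, and Corollary~\ref{psola1}.
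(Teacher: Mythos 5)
Your proof of (i) coincides with the paper's: the chain $\mathrm{u}_{E}\|\cdot\|_{h}\ge\mathrm{u}_{E}\theta\ge\mathrm{u}_{H}\eta\ge\mathrm{u}_{h}\eta$ (valid because linearity of $\theta$ gives $\theta\le\mathrm{ru}\le\|\cdot\|_{h}$) followed by Proposition \ref{unu}, and the localization of both unbounded convergences to $\left[0_{F},e\right]$ via Proposition \ref{unba}(i). For (ii) you take a correct but genuinely different route. The paper argues abstractly: the local hypothesis together with the extremal property in Proposition \ref{unba}(i) gives $\mathrm{u}_{E\cap H}\theta\ge\mathrm{u}_{E\cap H}\eta$; then Proposition \ref{adh} yields $H\subset\overline{E}_{\eta}\cap\overline{H}_{\eta}=\overline{E\cap H}_{\eta}$, and Proposition \ref{unba}(ii) -- which is where idempotency enters -- gives $\mathrm{u}_{H}\eta\le\mathrm{u}_{\overline{E\cap H}_{\eta}}\eta=\mathrm{u}_{E\cap H}\eta\le\mathrm{u}_{E\cap H}\theta\le\mathrm{u}_{E}\theta$. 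You instead fix $h\in H_{+}$ and a $\mathrm{u}_{E}\theta$-null net, extract the decreasing $\eta$-null controlling net $\left(h-e\right)_{e\in\left[0_{F},h\right]\cap E}$ from $h\in\overline{E_{+}}^{1}_{\eta}$ via Corollary \ref{psola1}, and verify the composition witness $\left(h\wedge f_{\alpha}-\left(h-e\right)\right)^{+}\le e\wedge f_{\alpha}\xrightarrow[\alpha]{\eta}0_{F}$ directly, closing with $\eta^{2}=\eta$. This essentially inlines the content of Proposition \ref{unba}(ii) and bypasses Proposition \ref{adh}; the paper's version is shorter given the cited machinery, while yours is more self-contained and makes the role of idempotency explicit. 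The two small points your argument relies on are both handled correctly: every $e\in\left[0_{F},h\right]\cap E$ lies in $E\cap H$ because $H$ is an ideal containing $h$, so the local hypothesis applies to $e\wedge f_{\alpha}$; and a net in $\left[0_{F},e\right]$ that is $\theta$-null is $\theta|_{\left[0_{F},e\right]}$-null, hence $\eta$-null.
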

\begin{proof}
(i): Since $\theta$ is linear, for every $h\in H$ we have that $\mathrm{u}_{E}\|\cdot\|_{h}\ge \mathrm{u}_{E}\theta\ge \mathrm{u}_{H}\eta\ge \mathrm{u}_{h}\eta$. Proposition \ref{unu} now guarantees that $h\in\overline{E}^{1}_{\eta}$.\medskip

For every $e\in E\cap H$, $\eta$ and $\theta$ coincide with $\mathrm{u}_{H}\eta$ and $\mathrm{u}_{E}\theta$, respectively, on $\left[0_{F},e\right]$, which justifies the second conclusion.\medskip

(ii): Our assumption together with the extremal property of the unbounded convergence discussed in part (i) of Proposition \ref{unba} imply that $\mathrm{u}_{E\cap H}\theta\ge \mathrm{u}_{E\cap H}\eta$. It then follows from Proposition \ref{adh} that $H\subset \overline{E}_{\eta}\cap \overline{H}_{\eta}=\overline{E\cap H}_{\eta}$. Thus, using part (ii) of Proposition \ref{unba} we conclude that $\mathrm{u}_{H}\eta\le \mathrm{u}_{\overline{E\cap H}_{\eta}}\eta=\mathrm{u}_{E\cap H}\eta\le \mathrm{u}_{E\cap H}\theta\le \mathrm{u}_{E}\theta$.
\end{proof}

\begin{corollary}
If $\eta$ is linear and idempotent and $A,B\subset F$, then $\mathrm{u}_{A}\eta\ge \mathrm{u}_{B}\eta$ iff $B\subset \overline{I\left(A\right)}$.
\end{corollary}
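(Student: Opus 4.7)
The plan is to deduce this as a direct consequence of Theorem \ref{unbc} applied with $\theta=\eta$. The first reduction is to replace the sets $A$ and $B$ by the ideals they generate: by part (ii) of Proposition \ref{unba}, $\mathrm{u}_{A}\eta=\mathrm{u}_{I(A)}\eta$ and $\mathrm{u}_{B}\eta=\mathrm{u}_{I(B)}\eta$, so the inequality in question is equivalent to $\mathrm{u}_{I(A)}\eta\ge \mathrm{u}_{I(B)}\eta$. Since $\eta$ is idempotent, the adherence $\overline{I(A)}^{1}_{\eta}$ coincides with the closure $\overline{I(A)}_{\eta}$ (as observed in the discussion immediately following Corollary \ref{ass}), and this common set is an ideal.

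For the forward implication, I would apply part (i) of Theorem \ref{unbc} with $E:=I(A)$, $H:=I(B)$ and $\theta:=\eta$; the linearity hypothesis on $\theta$ is met by assumption, so the conclusion provides $I(B)\subset \overline{I(A)}^{1}_{\eta}=\overline{I(A)}$, whence $B\subset \overline{I(A)}$. For the reverse implication, suppose $B\subset \overline{I(A)}$; since $\overline{I(A)}$ is an ideal, this upgrades to $I(B)\subset \overline{I(A)}_{\eta}$. Now part (ii) of Theorem \ref{unbc} applies with the same $E$, $H$, $\theta$: its secondary hypothesis $\left.\theta\right|_{[0_{F},e]}\ge \left.\eta\right|_{[0_{F},e]}$ for $e\in I(A)\cap I(B)$ is automatic because $\theta=\eta$, and we conclude $\mathrm{u}_{I(A)}\eta\ge \mathrm{u}_{I(B)}\eta$, i.e.\ $\mathrm{u}_{A}\eta\ge \mathrm{u}_{B}\eta$.

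No substantive obstacle arises: the corollary is essentially the specialization $\theta=\eta$ of Theorem \ref{unbc}, once one records that unbounded modification depends only on the generated ideal (Proposition \ref{unba}(ii)) and that under idempotency the $\eta$-closure of an ideal is itself an ideal.
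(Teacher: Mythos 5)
Your proof is correct and matches the paper's intent exactly: the corollary is stated without proof precisely because it is the specialization $\theta=\eta$, $E=I(A)$, $H=I(B)$ of Theorem \ref{unbc}, combined with Proposition \ref{unba}(ii) and the fact that under idempotency the adherence of an ideal is its closure and is itself an ideal. All the hypotheses are checked correctly, so there is nothing to add.
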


Note that $\mathrm{u}_{E}\theta\ge \mathrm{u}_{H}\eta$ can be only true for sequences (as opposed to all nets). For such an occasion we have the following generalization of \cite[Theorem 3.3]{kt}.

\begin{proposition}
Let $\eta,\theta$ be locally solid additive convergences on $F$ and let $E,H\subset F$ be ideals. Assume furthermore that $\theta$ is linear and $H$ is countably generated. If $\mathrm{u}_{H}\theta\ge \mathrm{u}_{E}\eta$ on sequences, then $E$ is contained in the sequential $\eta$-adherence of $H$.
\end{proposition}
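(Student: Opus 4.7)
The plan is to fix an arbitrary $e\in E_{+}$ (the general case reducing to this, since $H$ is a linear subspace and $\eta$ is additive) and exhibit a sequence in $H$ which $\eta$-converges to $e$. Since $H$ is countably generated, write $H=\bigcup_{n\in\N}F_{h_{n}}$ for some increasing sequence $\left(h_{n}\right)\subset H_{+}$, and consider the diagonal sequence $f_{k}:=e\wedge kh_{k}\in H\cap\left[0_{F},e\right]$.

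The central step is to verify $f_{k}\xrightarrow[]{\mathrm{u}_{H}\theta}e$, which amounts to $h\wedge\left(e-kh_{k}\right)^{+}\xrightarrow[k]{\theta}0_{F}$ for every $h\in H_{+}$. Such an $h$ is dominated by some $mh_{n}$, and for $k\geq n$ monotonicity of $\left(h_{k}\right)$ together with $mh_{n}\wedge x\leq m\bigl(h_{n}\wedge x\bigr)$ gives
\[
h\wedge\left(e-kh_{k}\right)^{+}\leq mh_{n}\wedge\left(e-kh_{n}\right)^{+}\leq m\bigl(h_{n}\wedge\left(e-kh_{n}\right)^{+}\bigr).
\]
Lemma \ref{unu1}, applied with $e$ (in the role of ``$f$'') and $h_{n}$ (in the role of ``$e$''), yields $h_{n}\wedge\left(e-kh_{n}\right)^{+}\xrightarrow[k]{\|\cdot\|_{e}}0_{F}$. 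Since $\theta$ is linear, $\theta\leq\mathrm{ru}\leq\|\cdot\|_{e}$, so this convergence passes to $\theta$; the domination axiom in Theorem \ref{locs} then transfers it to $h\wedge\left(e-kh_{k}\right)^{+}$.

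With $f_{k}\xrightarrow[]{\mathrm{u}_{H}\theta}e$ in hand, the sequential hypothesis delivers $f_{k}\xrightarrow[]{\mathrm{u}_{E}\eta}e$. Since $e\in E$ and $e-f_{k}\in\left[0_{F},e\right]$, testing the defining condition of $\mathrm{u}_{E}\eta$ with $a:=e$ yields $e-f_{k}=e\wedge\left(e-f_{k}\right)\xrightarrow[]{\eta}0_{F}$; thus $\left(f_{k}\right)\subset H$ realises $e$ as a sequential $\eta$-limit of elements of $H$.

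The main obstacle is the verification that the diagonal sequence $\mathrm{u}_{H}\theta$-converges to $e$: this is precisely where both extra hypotheses enter, since the countable generation of $H$ permits a single diagonal sequence to cope with arbitrary generators, while the linearity of $\theta$ is what allows the rather strong $\|\cdot\|_{e}$-convergence furnished by Lemma \ref{unu1} to descend to $\theta$. The final passage from $\mathrm{u}_{E}\eta$ back to $\eta$ is then routine because $e$ itself is an admissible test element for $\mathrm{u}_{E}\eta$.
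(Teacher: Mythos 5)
Your argument is correct and is essentially the paper's own proof: both fix $e\in E_{+}$, form the diagonal sequence $e\wedge kh_{k}$ from an increasing generating sequence of $H$, use Lemma \ref{unu1} together with $\theta\le\mathrm{ru}$ (linearity of $\theta$) to get $e\wedge kh_{k}\xrightarrow{\mathrm{u}_{H}\theta}e$, and then pass through the sequential hypothesis and the agreement of $\mathrm{u}_{E}\eta$ with $\eta$ on $\left[0_{F},e\right]$. The only cosmetic difference is that you verify the test against an arbitrary $h\in H_{+}$ via domination by $mh_{n}$, where the paper invokes Proposition \ref{unba}(ii) to reduce to the generators.
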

\begin{proof}
We may assume that $H=I\left(\left\{h_{n}\right\}_{n\in\N}\right)$, where $\left(h_{n}\right)_{n\in\N}$ is increasing. Fix $e\in E$. From Lemma \ref{unu1}, for $n\ge m$ we have that $h_{m}\wedge \left(e-nh_{n}\right)^{+}\le h_{m}\wedge \left(e-nh_{m}\right)^{+}\xrightarrow[n]{\mathrm{ru}} 0_{F}$, hence $h_{m}\wedge \left(e-nh_{n}\right)^{+}\xrightarrow[n]{\theta} 0_{F}$, for every $m\in\N$. It then follows that $\left(e-nh_{n}\right)^{+}\xrightarrow[]{\mathrm{u}_{H}\theta} 0_{F}$, therefore $e\wedge nh_{n}\xrightarrow[]{\mathrm{u}_{H}\theta}e$, which according to our assumption yields $\left[0_{F},e\right]\cap H\ni e\wedge nh_{n}\xrightarrow[]{\mathrm{u}_{E}\eta}e$. As $\eta$ and $\mathrm{u}_{E}\eta$ agree on $\left[0_{F},e\right]$, we conclude that $e$ is in the sequential $\eta$-adherence of $H$.
\end{proof}

Let $E\subset F$ be a sublattice, and let $H\subset E$ be an ideal of $F$. For an additive locally solid convergence $\eta$ on $F$ we have $\left.\left(\mathrm{u}_{H}\eta\right)\right|_{E}=\mathrm{u}_{H}\left(\left.\eta\right|_{E}\right)$, i.e. in this case restricting to $E$ commutes with unbounding by $H$. However, if $H\not\subset E$, in general we only have $\left.\left(\mathrm{u}_{H}\eta\right)\right|_{E}\ge\mathrm{u}_{E\cap H}\left(\left.\eta\right|_{E}\right)$. In particular, for the case $H=F$, in general we only have $\left.\left(\mathrm{u}\eta\right)\right|_{E}\ge\mathrm{u}\left(\left.\eta\right|_{E}\right)$. If the latter inequality is in fact equality, we will say that $\eta$ \emph{commutes} with $E$. Some sufficient conditions for this to occur were presented in \cite[Section 4]{kmt} and \cite[Lemma 3.4]{taylor}. We extend them in the following result.

\begin{proposition}\label{latcom}Let $\eta$ be a locally solid additive convergence on $F$ and let $E\subset F$ be a sublattice. Then:
\item[(i)] If $\eta$ commutes with $E$, and $H\subset E$ is a sublattice such that $\left.\eta\right|_{E}$ commutes with $H$, then $\eta$ commutes with $H$.
\item[(ii)] If $I\left(E\right)$ is a projection band, then $E$ commutes with $\eta$.
\item[(iii)] If $\eta$ is idempotent and $\overline{I\left(E\right)+E^{d}}_{\eta}=F$, then $E$ commutes with $\eta$.
\item[(iv)] If $\eta$ is idempotent and order continuous, then $E$ commutes with $\eta$.
\end{proposition}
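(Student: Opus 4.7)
The plan is to dispatch (i) with a formal composition and to run (ii)--(iv) through one common construction. For (i), since $H\subset E$, restrictions stack:
$$\left.(\mathrm{u}\eta)\right|_H=\left.\left.(\mathrm{u}\eta)\right|_E\right|_H=\left.\mathrm{u}(\left.\eta\right|_E)\right|_H=\mathrm{u}(\left.\left.\eta\right|_E\right|_H)=\mathrm{u}(\left.\eta\right|_H),$$
where the two middle equalities invoke, respectively, commutativity of $\eta$ with $E$ and of $\left.\eta\right|_E$ with $H$.

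For (ii)--(iv), the inequality $\left.(\mathrm{u}\eta)\right|_E\ge \mathrm{u}(\left.\eta\right|_E)$ is free (noted just above the statement), so the task in each case is: given a net $\left(f_{\alpha}\right)_{\alpha\in A}\subset E_{+}$ which is $\mathrm{u}(\left.\eta\right|_E)$-null in $E$, show that $f_\alpha\xrightarrow[]{\mathrm{u}\eta}0_F$ in $F$. Introduce
$$H:=\left\{h\in F:~\left|h\right|\wedge f_\alpha\xrightarrow[]{\eta}0_F\right\}.$$
The same reasoning used in the proof of the preceding proposition (a direct application of \cite[Proposition 2.4]{erz}) shows that $H$ is an ideal of $F$, and that it is $\eta$-closed whenever $\eta$ is idempotent. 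Two inclusions are free regardless of hypotheses: $I(E)\subset H$, because each $e_1\in I(E)_+$ is dominated by $ne'$ for some $e'\in E_+$ and $n\in\N$, whence $e_1\wedge f_\alpha\le ne'\wedge f_\alpha\le n(e'\wedge f_\alpha)\xrightarrow[]{\eta}0_F$ via the identity $g\wedge nk\le n(g\wedge k)$ together with additivity and solidity; and $E^d\subset H$, since $h\in E^d$ forces $\left|h\right|\wedge f_\alpha=0_F$ for every $\alpha$.

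Part (ii) is now immediate: if $I(E)$ is a projection band, then (using $E^d=I(E)^d$) we get $F=I(E)\oplus E^d\subset H$, so $H=F$; no closedness of $H$ is needed. Part (iii) uses the $\eta$-closedness bought by idempotency to upgrade $I(E)+E^d\subset H$ to $\overline{I(E)+E^d}_\eta\subset H$, and the hypothesis then forces $H=F$. Part (iv) reduces to (iii) by showing that in any Archimedean vector lattice $F$ the sublattice $I(E)+E^d$ is automatically order dense, so that order continuity of $\eta$ promotes order density to $\eta$-density and yields $\overline{I(E)+E^d}_\eta=F$. The density splits: $E^{dd}+E^d$ is order dense in $F$ (the standard Archimedean fact that $B\oplus B^d$ is order dense for any band $B$), while $I(E)$ is order dense in $E^{dd}$ since any $0_F<f\in E^{dd}$ is non-disjoint from $E$, so some $e\in E$ yields $0_F<f\wedge\left|e\right|\in I(E)\cap(0_F,f]$. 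The main obstacle is really just bookkeeping: one has to check that the identity $g\wedge nk\le n(g\wedge k)$, the cited closedness of $H$, and the order-density splitting all remain valid for additive (rather than linear) locally solid $\eta$, so that no linearity or topology is smuggled in.
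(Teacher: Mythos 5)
Your proof is correct and takes essentially the same route as the paper's: (i) by unwinding the definition of commuting via stacked restrictions, (ii) and (iii) via the $\eta$-closed ideal $H=\{h\in F:\ |h|\wedge f_{\alpha}\xrightarrow{\eta}0_F\}$ containing $I(E)$ and $E^{d}$, and (iv) by reducing to (iii) through order density of $I(E)+E^{d}$. You merely supply details the paper leaves implicit (the $g\wedge nk\le n(g\wedge k)$ step for $I(E)\subset H$, the splitting $E^{dd}\oplus E^{d}$ for the order-density claim), and these are all sound in the additive setting.
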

\begin{proof}
(i) is proven by unpacking the definitions. (ii) is proven similarly to (iii), and (iv) follows from (iii), since $I\left(E\right)+E^{d}$ is always dense with respect to order convergence.\medskip

(iii): If $0_{E}\le e_{\alpha}\xrightarrow[]{\mathrm{u}\left(\left.\eta\right|_{E}\right)} 0_{E}$, then from \cite[Proposition 2.4]{erz} the set $H:=\left\{h\in F,~ \left|h\right|\wedge e_{\alpha}\xrightarrow[]{\eta} 0_{F}\right\}$ is a $\eta$-closed ideal which contains $E$ as well as $E^{d}$. From our assumption it follows that $H=F$, and so $e_{\alpha}\xrightarrow[]{\mathrm{u}\eta} 0_{F}$.
\end{proof}

Note that the conditions (ii) and (iii) are somewhat close to the necessary condition for $E$ to commute with $\eta$ established in \cite[Theorem 11.7]{ectv}: if $\eta$ is linear, complete and commutes with an ideal $E$, then $\overline{E}^{1}_{\eta}$ is a projection band.

\section{Acknowledgements}

The author wants to thank Vladimir Troitsky for general support and many valuable discussions on the topic of this paper. Jan Harm van der Walt gets credit for an idea used in the proof of Theorem \ref{weakest}. Some of the results of the paper were obtained during the author's visit to Sichuan University and Southwestern University of Finance and Economics (Chengdu, China). Many thanks to Yang Deng, Xingni Jiang and Marcel de Jeu for making it possible.

\begin{bibsection}
\begin{biblist}

\bib{ab0}{book}{
   author={Aliprantis, Charalambos D.},
   author={Burkinshaw, Owen},
   title={Locally solid Riesz spaces with applications to economics},
   series={Mathematical Surveys and Monographs},
   volume={105},
   edition={2},
   publisher={American Mathematical Society, Providence, RI},
   date={2003},
   pages={xii+344},
}

\bib{abt}{article}{
   author={Avil\'es, Antonio},
   author={Bilokopytov, Eugene},
   author={Troitsky, Vladimir},
   title={Atomicity of Boolean algebras and vector lattices in terms of
   order convergence},
   journal={Proc. Amer. Math. Soc.},
   volume={152},
   date={2024},
   number={8},
   pages={3275--3287},
   }

\bib{bb}{book}{
   author={Beattie, R.},
   author={Butzmann, H.-P.},
   title={Convergence structures and applications to functional analysis},
   publisher={Kluwer Academic Publishers, Dordrecht},
   date={2002},
   pages={xiv+264},
}

\bib{erz}{article}{
   author={Bilokopytov, Eugene},
   title={Locally solid convergences and order continuity of positive
   operators},
   journal={J. Math. Anal. Appl.},
   volume={528},
   date={2023},
   number={1},
   pages={Paper No. 127566, 23},
}

\bib{erz1}{article}{
   author={Bilokopytov, Eugene},
   title={Characterizations of the projection bands and some order
   properties of the lattices of continuous functions},
   journal={Positivity},
   volume={28},
   date={2024},
   number={3},
   pages={Paper No. 35, 21},
}

\bib{ectv}{article}{
   author={Bilokopytov, E.},
   author={Conradie, J.},
   author={Troitsky, V.G.},
   author={van der Walt, J.H.},
   title={Locally solid convergence structures},
   journal={\href{http://arxiv.org/abs/2404.15641}{arXiv:2404.15641}},
   date={2024},
}

\bib{fremlin}{article}{
   author={Fremlin, D. H.},
   title={Riesz spaces with the order-continuity property. II},
   journal={Math. Proc. Cambridge Philos. Soc.},
   volume={83},
   date={1978},
   number={2},
   pages={211--223},
}

\bib{hewitt}{article}{
   author={Hewitt, Edwin},
   title={A problem of set-theoretic topology},
   journal={Duke Math. J.},
   volume={10},
   date={1943},
   pages={309--333},
}

\bib{klt}{article}{
   author={Kandi\'{c}, M.},
   author={Li, H.},
   author={Troitsky, V. G.},
   title={Unbounded norm topology beyond normed lattices},
   journal={Positivity},
   volume={22},
   date={2018},
   number={3},
   pages={745--760},
}

\bib{kmt}{article}{
   author={Kandi\'{c}, M.},
   author={Marabeh, M. A. A.},
   author={Troitsky, V. G.},
   title={Unbounded norm topology in Banach lattices},
   journal={J. Math. Anal. Appl.},
   volume={451},
   date={2017},
   number={1},
   pages={259--279},
}

\bib{kt}{article}{
   author={Kandi\'c, M.},
   author={Taylor, M. A.},
   title={Metrizability of minimal and unbounded topologies},
   journal={J. Math. Anal. Appl.},
   volume={466},
   date={2018},
   number={1},
   pages={144--159},
}

\bib{mn}{book}{
   author={Meyer-Nieberg, Peter},
   title={Banach lattices},
   series={Universitext},
   publisher={Springer-Verlag, Berlin},
   date={1991},
   pages={xvi+395},
}

\bib{dow}{article}{
   author={O'Brien, M.},
   author={Troitsky, V.G.},
   author={van der Walt, J.H.},
   title={Net convergence structures with applications to vector lattices},
   journal={Quaest. Math.},
   volume={46},
   date={2023},
   number={2},
   pages={243--280},
}

\bib{taylor}{article}{
   author={Taylor, Mitchell A.},
   title={Unbounded topologies and $uo$-convergence in locally solid vector
   lattices},
   journal={J. Math. Anal. Appl.},
   volume={472},
   date={2019},
   number={1},
   pages={981--1000},
}

\bib{vw}{article}{
   author={van der Walt, Jan Harm},
   title={Applications of convergence spaces to vector lattice theory},
   journal={Topology Proc.},
   volume={41},
   date={2013},
   pages={311--331},
}

\end{biblist}
\end{bibsection}

\end{document}